\newcommand{\N}{\ensuremath{\mathbf{N}}\xspace}
\newcommand{\Z}{\ensuremath{\mathbb{Z}}\xspace}
\newcommand{\Q}{\ensuremath{\mathbb{Q}}\xspace}
\newcommand{\C}{\ensuremath{\mathbb{C}}\xspace}
\newcommand{\A}{\ensuremath{\mathbb{A}}\xspace}
\newcommand{\cH}{\ensuremath{\mathscr{H}}\xspace}
\newcommand{\PP}{\ensuremath{\mathfrak{P}}\xspace}
\mathchardef\mhyphen="2D
\newcommand{\ai}{\ensuremath{\mathfrak{a}}\xspace}
\newcommand{\M}{\ensuremath{\mathfrak{M}}\xspace}
\newcommand{\n}{\ensuremath{\mathfrak{n}}\xspace}
\newcommand{\nn}{\ensuremath{\mathbf{n}}\xspace}
\newcommand{\kk}{\ensuremath{\mathbf{k}}\xspace}
\newcommand{\vv}{\ensuremath{\mathbf{v}}\xspace}
\newcommand{\p}{\ensuremath{\mathfrak{p}}\xspace}
\renewcommand{\l}{\ensuremath{\mathfrak{l}}\xspace}
\newcommand{\OO}{\ensuremath{\mathscr{O}}\xspace}
\newcommand{\comment}[1]{}
\DeclareMathOperator{\Gal}{Gal}
\DeclareMathOperator{\End}{End}
\DeclareMathOperator{\Hom}{Hom}
\newtheorem{theorem}{Theorem}[section]
\newtheorem{proposition}[theorem]{Proposition}
\newtheorem{corollary}[theorem]{Corollary}
\newtheorem{lemma}[theorem]{Lemma}
\newtheorem{sublemma}[theorem]{Sublemma}
\newtheorem*{theor}{Theorem}
\theoremstyle{definition}
\newtheorem{definition}[theorem]{Definition}
\theoremstyle{remark}
\newtheorem{remark}[theorem]{Remark}
\newcommand{\OC}{\ensuremath{\mathbf{S}^D_{X}(U;r)}\xspace}
\newcommand{\OCY}{\ensuremath{\mathbf{S}^D_{Y}(U;r)}\xspace}
\newcommand{\OCV}{\ensuremath{\mathbf{S}^D_{X}(V;r)}\xspace}
\newcommand{\OCVY}{\ensuremath{\mathbf{S}^D_{Y}(V;r)}\xspace}
\newcommand{\OCd}{\ensuremath{\mathbf{V}^D_{X}(U;r)}\xspace}
\newcommand{\OCdY}{\ensuremath{\mathbf{V}^D_{Y}(U;r)}\xspace}
\newcommand{\OCdV}{\ensuremath{\mathbf{V}^D_{X}(V;r)}\xspace}
\newcommand{\s}{\ensuremath{^{Q}}\xspace}
\newcommand{\UU}{\ensuremath{U_1(\n)\cap U_0(\pi^\alpha)}\xspace}
\newcommand{\VV}{\ensuremath{U_1(\n)\cap U_0(\l\pi^\alpha)}\xspace}
\newcommand{\GL}{\ensuremath{\mathrm{GL}\xspace}}
\newcommand{\SL}{\ensuremath{\mathrm{SL}\xspace}}
\begin{document}
\bibliographystyle{abbrv}
\title[Level raising for $p$-adic Hilbert modular forms]{Level raising for $p$-adic Hilbert modular forms}
\author{James Newton}
\email{jjmn2@cam.ac.uk}
\date{\today}
\begin{abstract}
This paper generalises previous work of the author to the setting of overconvergent $p$-adic automorphic forms for a definite quaternion algebra over a totally real field. We prove results which are analogues of classical `level raising' results in the theory of mod $p$ modular forms. Roughly speaking, we show that an overconvergent eigenform whose associated local Galois representation at some auxiliary prime $\l$ is (a twist of) a direct sum of trivial and cyclotomic characters lies in a family of eigenforms whose local Galois representation at $\l$ is generically (a twist of) a ramified extension of trivial by cyclotomic.

We give some explicit examples of $p$-adic automorphic forms to which our results apply, and give a general family of examples whose existence would follow from counterexamples to the Leopoldt conjecture for totally real fields. 

These results also play a technical role in other work of the author on the problem of local--global compatibility at Steinberg places for Hilbert modular forms of partial weight one.
\end{abstract}
\maketitle
\section{Introduction}
In the paper \cite{chicomp}, we proved some results related to the levels of overconvergent $p$-adic automorphic forms associated with definite quaternion algebras over $\mathbb{Q}$ (as defined by Buzzard \cite{Bu1}). The main result is, roughly speaking, that an overconvergent automorphic form whose associated local Galois representation at some auxiliary prime $l$ is (a twist of) a direct sum of trivial and cyclotomic characters, lies in a family of eigenforms whose local Galois representation at $l$ is generically (a twist of) a ramified extension of trivial by cyclotomic. Any classical member of such a family necessarily generates an automorphic representation whose local factor at $l$ is (a twist of) Steinberg, so this can be regarded as an analogue of the level raising results of Ribet \cite{MR804706} and Diamond--Taylor \cite{DT}. Such results were conjectured by Paulin in his work on local--global compatibility \cite{Pa} (see also \cite{NonComp}). 

In this paper we generalise results of \cite{chicomp} to definite quaternion algebras over a totally real field $F$. Something which we would like to emphasize in this paper, which was not made clear in our previous work, is that our results apply to many $p$-adic automorphic forms whose attached Galois representation is \emph{reducible} --- we only have to exclude things which look something like a weight $2$ Eisenstein series. 

We briefly set-up some notation so we can describe a version of our main theorem, Theorem \ref{raise}. Let $D$ be a definite quaternion algebra over a totally real number field $F$. Fix a prime $p$ and let $\n$ be an ideal of $\OO_F$, coprime to $p$ and the discriminant $\delta$ of $D$. Denote by $\mathscr{E}^D(\n)$ the nilreduction of the eigenvariety of tame level $U_1(\n)$\footnote{by which we mean the usual subgroup of $(D\otimes_F{\A_{F,f}})^\times$, see section \ref{notdef}} constructed in \cite[Part III]{Bu2}. Let $\l$ be a prime ideal of $\OO_F$, coprime to $p\delta$. As remarked above, our results do not apply to certain points of $\mathscr{E}^D(\n)$ --- they all have reducible associated Galois representations, and their weights are, up to a shift in central character and twist by a finite order character, of parallel weight two. We call such points `very Eisenstein' --- more precisely, the points correspond to very Eisenstein maximal ideals of Hecke algebras in the sense of definition \ref{eisideal}.

\begin{theor}
Suppose we have a point $\phi \in \mathscr{E}^D(\n)$ which is not very Eisenstein, and with $T_\l^2(\phi)-(\mathbf{N}\l+1)^2S_\l(\phi)=0$. Let the roots of the $\l$-Hecke polynomial\footnote{this is $X^2-T_\l(\phi)X+(\mathbf{N}\l)S_\l(\phi)$} corresponding to $\phi$ be $\alpha$ and $(\mathbf{N}\l)\alpha$ where $\alpha \in \C_p^\times$. Then the image of the point $\phi$ in $\mathscr{E}^D(\n\l)$ lies in a family of $\l$-new points\footnote{these are the points arising from the natural analogue of $\l$-newforms --- more precisely, we mean points lying in the subspace $\mathscr{E}^{D}(\n\l)^{\l\mhyphen\mathrm{new}}$, constructed in Proposition \ref{neweig}}. In particular, all the classical points in this family arise from automorphic representations of $(D\otimes_F{\A_{F}})^\times$ whose factor at $\l$ is a special representation\footnote{i.e. a twist of the Steinberg representation of $\GL_2(F_\l)$}.
\end{theor}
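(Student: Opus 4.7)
The plan is to work inside the eigenvariety $\mathscr{E}^D(\n\l)$, using its decomposition into old-form and new-form parts, and to show that an image of $\phi$ under a degeneracy map lies on the $\l$-new subvariety $\mathscr{E}^{D}(\n\l)^{\l\mhyphen\mathrm{new}}$. I would first recall the two degeneracy maps from level $U_1(\n)$ to level $U_1(\n) \cap U_0(\l)$, which provide a Hecke-equivariant embedding of old forms into level $\n\l$ forms, and note that the $\l$-new subspace is cut out by the kernel of the corresponding pair of trace maps. This decomposition passes to the associated eigenvarieties: $\mathscr{E}^D(\n\l)$ contains an ``old'' subvariety (the two-to-one image of $\mathscr{E}^D(\n)$) together with $\mathscr{E}^{D}(\n\l)^{\l\mhyphen\mathrm{new}}$. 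Under the chosen degeneracy map, $\phi$ has an image $\phi^{\mathrm{old}} \in \mathscr{E}^D(\n\l)$ with $U_\l$-eigenvalue $\alpha$; observe that comparing the sum and product of the roots $\alpha$ and $(\mathbf{N}\l)\alpha$ of the $\l$-Hecke polynomial, the hypothesis $T_\l^2 = (\mathbf{N}\l+1)^2 S_\l$ is precisely the identity $\alpha^2 = S_\l$.

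The core step is to show that $\phi^{\mathrm{old}}$ also lies on $\mathscr{E}^{D}(\n\l)^{\l\mhyphen\mathrm{new}}$. I would localize a big Hecke algebra acting on the level-$\n\l$ overconvergent forms at the maximal ideal $\m$ corresponding to $\phi^{\mathrm{old}}$, and argue that the $\m$-localization of the new subspace is nonzero. By the exact sequence relating old and new subspaces, this is equivalent to a failure of surjectivity of the trace map onto the $\phi^{\mathrm{old}}$-eigencomponent of the old part. The relation $\alpha^2 = S_\l$ is exactly the numerical coincidence making $\alpha$ the $U_\l$-eigenvalue a hypothetical $\l$-new form congruent to $\phi$ would carry, forcing a nontrivial contribution from the new subspace. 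Once this non-vanishing is established, the rigid-analytic structure of $\mathscr{E}^{D}(\n\l)^{\l\mhyphen\mathrm{new}}$ produces a family through $\phi^{\mathrm{old}}$, and classical members of the family are Steinberg at $\l$ by the standard local Langlands dictionary.

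The main obstacle is the rigorous verification of this non-vanishing in the $p$-adic family setting, in particular because $\phi$ is allowed to have reducible associated Galois representation. Classical level-raising in the style of Ribet, Diamond--Taylor, and Jarvis yields the result on classical specializations under suitable residual non-Eisenstein hypotheses, but here one needs an argument that is uniform in families and that still controls the behaviour at points with reducible residual Galois representation. This is precisely where the exclusion of ``very Eisenstein'' points enters: the definition \ref{eisideal} is engineered so that the level-raising congruence does not degenerate. The argument would closely parallel the author's work \cite{chicomp} for the case $F = \Q$, with modifications to handle the Hilbert setting, possibly via Jarvis-style classical level raising applied to a Zariski-dense set of classical points on a suitable irreducible component of $\mathscr{E}^D(\n)$ through $\phi$ and an eigenvariety-theoretic passage to the limit.
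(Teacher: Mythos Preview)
Your overall architecture is right: one passes to the modules $L=\OC^Q$ and $M=\OCV^Q$ over an affinoid $X\subset\mathscr{W}$, sets up the degeneracy maps $i:L^2\to M$ and $i^\dagger:M\to L^2$, and shows that the maximal ideal $\mathfrak{M}_\phi$ lies in the $\cH$-support of $\ker(i^\dagger)$. But the mechanism you propose for the last step is not what the paper does, and your suggested alternative would not work.

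The paper does \emph{not} invoke classical level raising (Ribet, Diamond--Taylor, Jarvis) at a Zariski-dense set of classical points and then interpolate. That route has a real obstruction: classical level raising produces congruences modulo a prime, not equalities of $p$-adic Hecke eigenvalues, and there is no evident way to pass from such congruences at scattered classical points to a support statement for the family over $\OO(X)$. Instead, the paper proves an Ihara-type lemma directly for the $p$-adic coefficient modules (Lemmas~\ref{normtrivial} and~\ref{normtriviald}): if a form in $\OCY$ or $\OCdY$ factors through the reduced norm, then it is either zero or forces the weight to be ``very Eisenstein''. This is the content you are missing. From it one deduces that the torsion in $M/iL^2$ is very Eisenstein and $(M^*/jL^{*2})^{\mathrm{tors}}=0$ (Lemma~\ref{lemma:trivialtorsion}, Corollary~\ref{cor:trivtors}).

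The support argument then runs through the filtration $\Lambda_0^*\supset\Lambda_1^*\supset\Lambda_2^*\supset\Lambda_3^*$ of Section~\ref{somemodules} and the pairings $P_1,P_2,P_3$. The hypothesis $T_\l^2-(\mathbf{N}\l+1)^2S_\l\in\mathfrak{M}_\phi$ places $\mathfrak{M}_\phi$ in the support of $\Lambda_0^*/\Lambda_3^*$ (this is the determinant of the matrix for $j^\dagger j$); the Ihara input shows the graded pieces $\Lambda_0^*/\Lambda_1^*$ and $\Lambda_2^*/\Lambda_3^*$ are very Eisenstein, so a non-very-Eisenstein $\mathfrak{M}_\phi$ must lie in the support of $\Lambda_1^*/\Lambda_2^*$, and the pairing $P_3$ transports this to $\ker(i^\dagger)$. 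Your phrase ``failure of surjectivity of the trace map'' gestures at this, but the actual content is the Ihara lemma and the module/duality chain, neither of which appears in your proposal.
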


We also give some examples of $p$-adic Hilbert modular forms to which our main theorem applies. In fact, all our examples are $p$-ordinary. The first collection (see \ref{leoeg}) only exists if Leopoldt's conjecture is false for $F$ and $p$, and for these examples it is crucial that our results hold in the reducible case. The second kind of example (see \ref{expeg}) is found by doing some explicit computations with members of a Hida family --- we just discuss the case $F=\Q$ for simplicity. We have also taken the opportunity to include some corrections to \cite{chicomp} at the end of the paper.

In comparison to \cite{chicomp}, we have to generalise our arguments to work over a higher dimensional base (since weight space has dimension greater that $1$), and also take care of any possible Leopoldt defect. These modifications could be avoided if one was happy to just work over, say, the one-dimensional subspace of `parallel weights' in weight space. On the other hand, one of the main technical ingredients in our work is a form of Ihara's lemma (lemma \ref{normtriviald}) which requires a more complicated combinatorial argument in this setting, regardless of the region of weight space one works over.

These results also provide a technical input to some recent work on the question of local--global compatibility at Steinberg places for Hilbert modular forms of partial weight one \cite{lgpreprint}. 

\section{Modules of $p$-adic overconvergent automorphic forms and Ihara's lemma}\label{mainbody}
In this section we will prove the results we need about modules of $p$-adic overconvergent automorphic forms for quaternion algebras over totally real fields.

\subsection{Some notation and definitions}\label{notdef}
We begin by setting out some notation and definitions, following the presentation of \cite[Part III]{Bu2}. Let $p$ be a fixed prime, and fix a totally real field $F$ of degree $g$ over $\Q$, with ring of integers $\OO_F$. Let $K_0$ denote the closure in $\overline{\Q}_p$ (a fixed algebraic closure of $\Q_p$) of the compositum of the images of all the field homomorphisms $F \rightarrow \overline{\Q}_p$. We let $K$ be any complete extension of $K_0$, and denote by $I$ the set of field homomorphisms $F \rightarrow K$. If we fix an isomorphism $\C \cong \overline{\Q}_p$ then we may identify $I$ with the set of real places of $F$. We let $J$ denote the set of primes of $F$ dividing $p$. For $j \in J$ we denote by $F_j$ the completion of $F$ at $j$, with ring of integers $\OO_j$. Set $\OO_p = \OO_F \otimes_\Z \Z_p$ and $F_p = F \otimes_\Q \Q_p$. Then $\OO_p = \prod_{j \in J} \OO_j$ and $F_p = \prod_{j \in J} F_j$. Fix uniformisers $\pi_j$ of each $F_j$ and let $\pi \in F_p$ be the element with $j$th component $\pi_j$ for each $j \in J$. We also use $\pi$ to denote the ideal of $\OO_F$ which is the product of the prime ideals over $p$.

Any $i \in I$ gives a map $F_p \rightarrow K$, factoring through the projection $F_p \rightarrow F_j$ for some $j:=j(i) \in J$. The map $i \mapsto j(i)$ hence induces a natural surjection $I \rightarrow J$. For any set $S$, $(a_j) \in S^J$ and $i \in I$ we write $a_i$ to denote $a_{j(i)}$.

Let $D$ be a totally definite quaternion algebra over $F$ with discriminant $\delta$ prime to $p$. Fix a maximal order $\OO_D$ of $D$ and isomorphisms $\OO_D \otimes_{\OO_F} \OO_{F_v} \cong M_2(\OO_{F_v})$ for all finite places $v$ of $F$ where $D$ splits. Note that these induce isomorphisms $D \otimes_F F_v \cong M_2(F_v) $ for all such $v$. We define $D_f=D\otimes_F\A_{F,f}$, where $\A_{F,f}$ denotes the finite adeles over $F$. Write $Nm$ for the reduced norm map from $D_f^\times$ to $\A_{F,f}^\times$. Note that if $g \in D_f$ we can regard the projection of $g$ to $D\otimes_F F_p$, $g_p$, as an element of $M_2(F_p)$. 

For a $J$-tuple $\alpha \in \Z^J_{\ge 1}$, we let $\mathbb{M}_\alpha$ denote the monoid of matrices \[\begin{pmatrix}
a & b \\ c & d \\ \end{pmatrix} \in M_2(\OO_p)=D_p\] such that $\pi_j^{\alpha_j} | c_j$,  $\pi_j \nmid d_j$ and $a_jd_j-b_jc_j \neq 0$ for each $j \in J$. If $U$ is an open compact subgroup of $D_f^\times$ and $\alpha \in \Z^J_{\ge 1}$ we say that $U$ has \emph{wild level} $\geq \pi^\alpha$ if the projection of $U$ to $D_p^\times$ is contained in $\mathbb{M}_\alpha$.

We will be interested in two key examples of open compact subgroups of $D_f^\times$. For $\n$ an ideal of $\OO_F$ coprime to $\delta$, we define $U_0(\n)$ (respectively $U_1(\n)$) to be the subgroup of $D_f^\times$ given by the product $\prod_v U_v$, where $U_v = (\OO_D\otimes\OO_v)^\times$ for primes $v | \delta$, and $U_v$ is the matrices in $D_v^\times=\GL_2(\OO_v)$ of the form $\begin{pmatrix}
\ast & \ast \\ 0 & \ast
\end{pmatrix}$ (respectively $\begin{pmatrix}
\ast & \ast \\ 0 & 1
\end{pmatrix}$) mod $v^{\mathrm{val}_v(\n)}$ for all other $v$. For $\alpha = (\alpha_j)_{j \in J} \in \Z_{\ge 1}^J$, $U_1(\n)$ has wild level $\geq \pi^\alpha$ if for each $j \in J$ $\pi_j^{\alpha_j}$ divides the ideal generated by $\n$ in $\OO_j$.

Suppose we have $\alpha \in \Z_{\ge 1}^J$, $U$ a compact open subgroup of $D_f^\times$ of wild level $\geq \pi^\alpha$ and $A$ a module over a commutative ring $R$, with an $R$-linear right action of $\mathbb{M}_\alpha$. We define an $R$-module $\mathscr{L}(U,A)$ by
$$\mathscr{L}(U,A)=\{f:D_f^\times \rightarrow A : f(dgu)=f(g)u_p\ \hbox{ for all }d \in D^\times, g\in D_f^\times, u \in U\}$$ where $D^\times$ is embedded diagonally in $D_f^\times$. If we fix a set $\{d_i : 1\leq i \leq r\}$ of double coset representatives for the finite double quotient $D^\times \backslash D_f^\times / U$, and write $\Gamma_i$ for the group $d_i^{-1}D^\times d_i \cap U$, we have an isomorphism $$\mathscr{L}(U,A)\rightarrow \bigoplus_{i=1}^r A^{\Gamma_i},$$ given by sending $f$ to $(f(d_1),f(d_2),\ldots,f(d_r))$. 

For $f : D_f^\times \rightarrow A$, $x\in D_f^\times$ with $x_p \in \mathbb{M}_\alpha$, we define $f|x : D_f^\times \rightarrow A$ by $(f|x)(g)=f(gx^{-1})x_p$. Note that we can now also write $$\mathscr{L}(U,A)=\{f:D^\times \backslash D_f^\times \rightarrow A : f|u=f \hbox{ for all }u \in U\}.$$

We can define double coset operators on the spaces $\mathscr{L}(U,A)$. If $U$, $V$ are two compact open subgroups of $D_f^\times$ of wild level $\geq \pi^\alpha$, and $A$ is as above, then for $\eta\in D_f^\times$ with $\eta_p \in \mathbb{M}_\alpha$ we may define an $R$-module map $[U\eta V]:\mathscr{L}(U,A) \rightarrow \mathscr{L}(V,A)$ as follows: we decompose $U\eta V$ into a finite union of right cosets $\coprod_i U x_i$ and define $$f|[U\eta V] = \sum_i f|x_i.$$

\subsection{Overconvergent automorphic forms}\label{ocforms}
We let $\mathscr{W}$ denote the weight space defined in \cite[pg. 65]{Bu2} --- it is a rigid analytic space over $K$ with dimension $g+1+\mathfrak{d}$, where $\mathfrak{d}$ is the defect in Leopoldt's conjecture for $(F,p)$. Affinoid $K$-spaces $X$ equipped with a map to $\mathscr{W}$ correspond bijectively with continuous group homomorphism $\kappa : \OO_p^\times \times \OO_p^\times \rightarrow \OO(X)^\times$, such that the kernel of $\kappa$ contains a subgroup of $\OO_F^\times$ of finite index (where $\OO_F^\times$ is embedded in $\OO_p^\times \times \OO_p^\times$ via the map $\gamma \mapsto (\gamma,\gamma^2)$). 

We write $\kappa = (\nn,\vv)$ where $\nn:\OO_p^\times \rightarrow \OO(X)^\times$ is the composition of the map $\gamma \mapsto (\gamma,1)$ with $\kappa$, and $\vv:\OO_p^\times \rightarrow \OO(X)^\times$ is defined similarly with respect to the second component of $\OO_p^\times \times \OO_p^\times$.

We define locally algebraic points in the weight space $\mathscr{W}$ following \cite[\S 11]{Bu2} (although we use a different notation for weights of Hilbert modular forms). Suppose we have $\kk \in \Z_{\ge 2}^I$, $w \in \Z$ such that the components $k_i$ of $\kk$ are all congruent to $w$ mod $2$, and $\varepsilon : \OO_p^\times \rightarrow K'^\times$ a finite order character (with $K'/K$ a finite extension). Then we denote by $(\kk,w,\varepsilon)\in \mathscr{W}(K')$ the point corresponding to the map \begin{align*}\kappa:\OO_p^\times \times \OO_p^\times &\rightarrow K'^\times\\(a,b) &\mapsto \varepsilon(a)\prod_{i\in I} a_i^{k_i-2}b_i^{\frac{w-k_i}{2}}.\end{align*} In the above $a_i$ denotes the image in $K$ of $a_{j(i)}\in F_j$ under the embedding $i$ (and similarly for $b_i$). 

Let $\mathscr{N}_K$ denote the set $\{|x|:x\in\overline{K}^\times,|x|\le 1\}$ and let $\mathscr{N}_K^\times$ denote $\mathscr{N}_K\backslash \{1\}$. 

For $r \in (\mathscr{N}_K)^J$ we define $\mathbb{B}_{r}$ as in \cite[pp. 64-65]{Bu2} --- it is a rigid analytic subvariety of the unit $g$-polydisc over $K$. For all complete extensions $K'/K$ we have $$\mathbb{B}_{r}(K')=\{z \in K'^I: \exists y\in \OO_p \hbox{ such that for all } i, |z_i-y_i|\leq r_i\}.$$
Similarly if $r \in (\mathscr{N}_K^\times)^J$ we have $\mathbb{B}_{r}^\times$ satisfying $$\mathbb{B}_{r}^\times(K')=\{z \in K'^I: \exists y\in \OO_p^\times \hbox{ such that for all } i, |z_i-y_i|\leq r_i\}.$$

Supposing we have a map of rigid spaces $X \rightarrow \mathscr{W}$, where $X$ is a reduced $K$-affinoid.  We have an associated continuous group homomorphism $\kappa = (n,v) : \OO_p^\times \times \OO_p^\times \rightarrow \OO(X)^\times$. The map $n$ factors as a product (over $j\in J$) of maps $n_j: \OO_j^\times \rightarrow \OO(X)^\times$. Let $r \in (\mathscr{N}^\times_K)^J$. We say that $\kappa$ is $r$-analytic if for each $j \in J$ the character $n_j$ is induced by a morphism of rigid analytic varieties $n_j: \mathbb{B}_{r}^\times \times X \rightarrow \mathbb{G}_m.$

Now for $r \in (\mathscr{N}_K)^J$ we define $\mathscr{A}_{X,r}$ to be the $K$-Banach algebra $\OO(\mathbb{B}_{r} \times X)$. We endow $\mathscr{A}_{X,r}$ with the supremum norm. Suppose we have $\alpha \in \Z_{\ge 1}^J$. We let $r|\pi^\alpha|$ denote the element of $(\mathscr{N}^\times_K)^J$ with $j$ component equal to $r_j|\pi_j^{\alpha_j}|$. If $\kappa$ is $r|\pi^\alpha|$-analytic then we can define a (continuous, norm-decreasing) right action of $\mathbb{M}_\alpha$ on $\mathscr{A}_{X,r}$ as in \cite[pp. 71-72]{Bu2}. First we extend $v$ to a map $v:F_p^\times \rightarrow \OO(X)^\times$ by setting $v(\pi_j)=1$ for all $j\in J$. The formula `on points' for the $\mathbb{M}_\alpha$-action is that if $f \in \mathscr{A}_{X,r}$, $\gamma = \begin{pmatrix}
a&b\\
c&d
\end{pmatrix} \in \mathbb{M}_\alpha ,$
$$(f\cdot \gamma)(z,x)= n(cz+d,x)(v(\det(\gamma))(x))f(\frac{az+b}{cz+d},x),$$ where $x \in X(K')$ and $z \in \mathbb{B}_{r}(K').$ It follows from \cite[Proposition 8.3]{Bu2} that for fixed $(\kappa,r)$ as above, there exists an $\alpha$ such that $\kappa$ is $r|\pi^\alpha|$-analytic. 

We now define spaces of overconvergent automorphic forms, as in \cite[Part III]{Bu2}. 
\begin{definition} \label{ocformdef}Let $X$ be a reduced affinoid over $K$ and let $X\rightarrow \mathscr{W}$ be a morphism of rigid spaces, inducing $\kappa:\OO_p^\times \times \OO_p^\times \rightarrow \OO(X)^\times$. If we have $r \in (\mathscr{N}_K)^J$ and $\alpha \in \Z_{\ge 1}^J$ such that $\kappa$ is $r|\pi^\alpha|$-analytic, and $U$ a compact open subgroup of $D_f^\times$ of wild level $\geq \pi^{\alpha}$, then define the space of $r$-overconvergent automorphic forms of weight $\kappa$ (or `weight $X$') and level $U$ to be the $\OO(X)$-module
$$\mathbf{S}^D_{X}(U;r):=\mathscr{L}(U,\mathscr{A}_{X,r}).$$ 
\end{definition}

If we endow $\mathbf{S}^D_{X}(U;r)$ with the norm $|f|=\max_{g\in D_f^\times}|f(g)|$, then the isomorphism \begin{equation}\mathbf{S}^D_{X}(U;r)\cong \bigoplus_{i=1}^r \mathscr{A}_{X,r}^{\Gamma_i}\label{iso}\end{equation} induced by fixing double coset representatives $d_i$ is norm preserving. As discussed in \cite[pp 68--69]{Bu2}, the groups $\Gamma_i$ act on $\mathscr{A}_{X,r}$ via finite quotients, and $\mathscr{A}_{X,r}$ is an ONable Banach $\OO(X)$-module (it is the base change to $\OO(X)$ of $\OO(\mathbb{B}_{r})$, and all Banach spaces over a discretely valued field are ONable), so $\mathbf{S}^D_{X}(U;r)$ is a Banach $\OO(X)$-module satisfying property $(Pr)$ of \cite{Bu2}.

\begin{lemma}\label{bc}
Suppose $X$, $\kappa$, $r$ and $\alpha$ are as in Definition \ref{ocformdef}. Then for any reduced $K$-affinoid $Y \rightarrow X$ the natural map $\mathscr{A}_{X,r} \rightarrow \mathscr{A}_{Y,r}$ induces an isomorphism of Banach modules \[\mathbf{S}^D_{X}(U;r)\widehat{\otimes}_{\OO(X)}\OO(Y) \cong \mathbf{S}^D_{Y}(U;r).\]
\end{lemma}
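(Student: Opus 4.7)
The plan is to leverage the norm-preserving decomposition \eqref{iso}, reduce the statement to the identity $\mathscr{A}_{X,r}\widehat{\otimes}_{\OO(X)}\OO(Y) \cong \mathscr{A}_{Y,r}$, and then commute the (finite) invariants past the completed tensor product using idempotents.

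\textbf{Step 1: Reduce to an identity of $\Gamma_i$-invariants.} Fix a set of double coset representatives $\{d_1,\ldots,d_r\}$ for $D^\times \backslash D_f^\times / U$ and let $\Gamma_i = d_i^{-1}D^\times d_i \cap U$. The decomposition \eqref{iso} is functorial in the coefficient module, so under the $\OO(X)$-linear map $\mathscr{A}_{X,r}\to \mathscr{A}_{Y,r}$ it identifies the natural map $\mathbf{S}^D_X(U;r)\to\mathbf{S}^D_Y(U;r)$ with the direct sum of the maps on $\Gamma_i$-invariants. Since the direct sum is finite, it suffices to prove
\[\mathscr{A}_{X,r}^{\Gamma_i}\,\widehat{\otimes}_{\OO(X)}\OO(Y) \;\cong\; \mathscr{A}_{Y,r}^{\Gamma_i}\]
for each $i$.

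\textbf{Step 2: The underlying identity $\mathscr{A}_{X,r}\widehat{\otimes}_{\OO(X)}\OO(Y)\cong\mathscr{A}_{Y,r}$.} By definition $\mathscr{A}_{X,r}=\OO(\mathbb{B}_r\times X)$ and $\mathscr{A}_{Y,r}=\OO(\mathbb{B}_r\times Y)$. Since $\mathbb{B}_r\times Y = (\mathbb{B}_r\times X)\times_X Y$ as rigid $K$-spaces, the standard fact that global sections on an affinoid fibre product of affinoids are given by the completed tensor product of global sections yields a $\Gamma_i$-equivariant isometric isomorphism $\mathscr{A}_{X,r}\widehat{\otimes}_{\OO(X)}\OO(Y)\cong\mathscr{A}_{Y,r}$. (Alternatively, $\mathscr{A}_{X,r}$ is the completed base change of the ONable Banach $K$-module $\OO(\mathbb{B}_r)$ to $\OO(X)$, so its completed base change to $\OO(Y)$ is identified with $\OO(\mathbb{B}_r)\widehat{\otimes}_K\OO(Y)=\mathscr{A}_{Y,r}$.)

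\textbf{Step 3: Commuting invariants with completed tensor product.} As noted after \eqref{iso}, each $\Gamma_i$ acts on $\mathscr{A}_{X,r}$ through a finite quotient $H_i$. Because $K$ has characteristic zero, $|H_i|$ is a unit and we may form the averaging idempotent $e_i=\frac{1}{|H_i|}\sum_{h\in H_i}h$, which is a continuous $\OO(X)$-linear projector onto $\mathscr{A}_{X,r}^{\Gamma_i}$; in particular this invariant subspace is a direct summand of $\mathscr{A}_{X,r}$ as an $\OO(X)$-Banach module. Tensoring the split short exact sequence $0\to \mathscr{A}_{X,r}^{\Gamma_i}\to \mathscr{A}_{X,r}\xrightarrow{1-e_i}(1-e_i)\mathscr{A}_{X,r}\to 0$ with $\OO(Y)$ (the completed tensor product is exact on split sequences), and using that the idempotent $e_i$ is carried by the base change map to the analogous idempotent on $\mathscr{A}_{Y,r}$ (it is induced by the same finite-group averaging formula), we obtain
\[\mathscr{A}_{X,r}^{\Gamma_i}\,\widehat{\otimes}_{\OO(X)}\OO(Y)\;\cong\; e_i\bigl(\mathscr{A}_{X,r}\widehat{\otimes}_{\OO(X)}\OO(Y)\bigr)\;\cong\; e_i\,\mathscr{A}_{Y,r}\;=\;\mathscr{A}_{Y,r}^{\Gamma_i},\]
where the middle isomorphism uses Step 2.

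\textbf{Main obstacle.} The only non-formal point is verifying that the $\Gamma_i$-invariants are a direct summand in the Banach category, so that completed tensor product is exact on the relevant sequence; this is precisely where the finiteness of the quotient $H_i$ together with characteristic zero is used, and it is the only place the argument could fail. Once this is in hand, the rest is bookkeeping with the isomorphism \eqref{iso}.
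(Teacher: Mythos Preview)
Your proof is correct and follows essentially the same approach as the paper: reduce via the decomposition \eqref{iso} to showing $\mathscr{A}_{X,r}^{\Gamma_i}\widehat{\otimes}_{\OO(X)}\OO(Y)\cong\mathscr{A}_{Y,r}^{\Gamma_i}$, then use the averaging idempotent (coming from the finite quotient through which $\Gamma_i$ acts) to split off the invariants as a direct summand and carry this splitting through the completed base change. The paper's argument is slightly terser but identical in substance.
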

\begin{proof}
It is enough to check that the natural map \[\mathscr{A}_{X,r}^{\Gamma_i}\widehat{\otimes}_{\OO(X)}\OO(Y) \rightarrow (\mathscr{A}_{X,r}\widehat{\otimes}_{\OO(X)}\OO(Y))^{\Gamma_i} = \mathscr{A}_{Y,r}^{\Gamma_i}\] is an isomorphism. Denote by $e_X$ the idempotent projection onto the $\Gamma_i$-invariants, so that $\mathscr{A}_{X,r}^{\Gamma_i} = e_X\mathscr{A}_{X,r}$ \[\mathscr{A}_{X,r} = e_X\mathscr{A}_{X,r}\oplus (1-e_X)\mathscr{A}_{X,r},\] hence \[\mathscr{A}_{Y,r} = \mathscr{A}_{X,r}\widehat{\otimes}_{\OO(X)}\OO(Y) = e_X\mathscr{A}_{X,r}\widehat{\otimes}_{\OO(X)}\OO(Y)\oplus (1-e_X)\mathscr{A}_{X,r}\widehat{\otimes}_{\OO(X)}\OO(Y).\] We also have \[\mathscr{A}_{Y,r} = e_Y\mathscr{A}_{Y,r}\oplus (1-e_Y)\mathscr{A}_{Y,r}\] and it is clear that $e_X\mathscr{A}_{X,r}\widehat{\otimes}_{\OO(X)}\OO(Y)\subset e_Y\mathscr{A}_{Y,r}$ and $(1-e_X)\mathscr{A}_{X,r}\widehat{\otimes}_{\OO(X)}\OO(Y) \subset (1-e_Y)\mathscr{A}_{Y,r}$, so we deduce that both these inclusions are equalities, and in particular $e_X\mathscr{A}_{X,r}\widehat{\otimes}_{\OO(X)}\OO(Y)= e_Y\mathscr{A}_{Y,r}$.
\end{proof}

There are maps from spaces of classical automorphic forms into our spaces of overconvergent automorphic forms (with classical weight), which we now describe.  Let $U_0$ be a compact open subgroup of $D_f^\times$ of the form $U' \times \GL_2(\OO_p)$, and choose $\alpha \in \Z_{\ge 1}^J$. Choose a character $\varepsilon : \Delta:=U_0\cap U_0(\pi^\alpha)/U_0 \cap U_1(\pi^\alpha) \rightarrow K'^\times$ for $K'/K$ finite. Suppose we have a classical weight $(k,w,\varepsilon)$, with $(k,w)$ corresponding to suitable $(n,v)$. Then as in \cite[\S 9]{Bu2} we can define a finite dimensional $K$-vector space $L_{n,v}$ with a $K$-linear right action of $\mathbb{M}_\alpha$, such that $\mathscr{L}(U_0\cap U_1(\pi^\alpha),L_{n,v})$ is isomorphic (after tensoring with $\C$) to a classical space of automorphic forms for $D$. Denote by $\mathscr{L}(U_0\cap U_1(\pi^\alpha),L_{n,v})(\varepsilon)$ the $\varepsilon$-eigenspace (over $K'$) for the left action of $\Delta$, induced by $f \mapsto f|u^{-1}$. As described in \cite[\S 9]{Bu2}, for any $r\in (\mathscr{N}_K)^J$ there is an embedding (equivariant with respect to the Hecke operators defined below) $$\iota: \mathscr{L}(U_0\cap U_1(\pi^\alpha),L_{n,v})(\varepsilon) \rightarrow \mathbf{S}^D_{(k,w,\varepsilon)}(U_0\cap U_0(\pi^\alpha);r).$$

\subsection{An explicit description of the action of special elements of $\mathbb{M}_\alpha$ on $\mathscr{A}_{X,r}$}\label{explicit}
Later in this paper we will have to perform a rather explicit calculation involving $\mathscr{A}_{X,r}$ and the action of a certain element of $\mathbb{M}_\alpha$. Suppose we have $r \in (\mathscr{N}_K)^J$ such that for every $j \in J$, $r_j = |x_j|$ for some $x_j \in K$ (we can always enlarge $K$ to ensure this). Let $I_r$ denote the ideal in $\OO_p$ whose elements are $y\in \OO_p$ such that $|y_j|\le r_j$ for each $j \in J$. Let $S\subset \OO_p$ denote a set of representatives for $\OO_p/I_r$. We write $\mathbb{B}_{r}$ as a disjoint union of connected rigid spaces $$\mathbb{B}_{r}= \coprod_{s \in S} \mathbb{B}^s_{r}$$ where we have
$$ \mathbb{B}^s_{r}(K') = \{z \in K'^I: \hbox{ such that for all } i, |z_i-s_i|\leq r_i\}.$$
Let $\mathbf{1}\in (\mathscr{N}_K)^J$ denote the element with $1$ in every component. There is an isomorphism 
$$\nu_r : S \times\mathbb{B}_{\mathbf{1}} = \coprod_{s \in S}\mathbb{B}_{\mathbf{1}} \rightarrow \mathbb{B}_{r}$$ which maps $(s,z)$ on the left hand side to $(z_ix_i+y_i)_{i\in I}\in \mathbb{B}^s_{r}.$ This induces an isomorphism $\nu_r^*:\mathscr{A}_{X,r} \rightarrow \prod_{s \in S}\mathscr{A}_{X,\mathbf{1}}$. We choose $N$ a positive integer, sufficiently large so that $|\pi_j|^N < r_j$ for all $j \in J$, and compute how $\nu_r$ intertwines the action of $\gamma=\begin{pmatrix}
1 & \pi^N\\0 & 1
\end{pmatrix}.$ Note that $\gamma \in \mathbb{M}_{\alpha}$ for all $\alpha$.
\begin{lemma}\label{simplecalc}
For $f\in \mathscr{A}_{X,r}$, $x \in X$ and $(s,z)$ in $S \times\mathbb{B}_{\mathbf{1}}$ we have $$\nu_r^*(f\cdot\gamma)(x,(s,z))=\nu_r^*(f)(x,(s,(z_i+\pi_i^N/x_i)_{i\in I})).$$
\end{lemma}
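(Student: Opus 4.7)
The proof is a direct unwinding of definitions, so my plan is to carry out the calculation carefully, being explicit about what is happening in each coordinate.

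First, I would simplify the general action formula in the special case of $\gamma = \begin{pmatrix} 1 & \pi^N \\ 0 & 1 \end{pmatrix}$. Here $a=d=1$, $b=\pi^N$, $c=0$, so $n(cz+d,x) = n(1,x) = 1$ and $v(\det\gamma)(x) = v(1)(x) = 1$, while $\frac{az+b}{cz+d} = z + \pi^N$. Thus for $z \in \mathbb{B}_r(K')$ and $x \in X(K')$,
\[
(f\cdot\gamma)(z,x) = f(z+\pi^N,x),
\]
where $z + \pi^N$ is just coordinatewise addition (using the convention $\pi^N_i = \pi_{j(i)}^N$).

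Next I would pull this back along $\nu_r$. By definition $\nu_r(s,z) = (z_i x_i + s_i)_{i \in I}$, so
\[
\nu_r(s,z) + \pi^N = (z_i x_i + s_i + \pi_i^N)_{i\in I} = \bigl( (z_i + \pi_i^N/x_i) x_i + s_i \bigr)_{i\in I}.
\]
The point is that this expression has the form $\nu_r(s, z')$ with $z'_i := z_i + \pi_i^N/x_i$, provided we can check that $z' \in \mathbb{B}_{\mathbf{1}}(K')$ and that $s$ does not change. For the first, I use the choice of $N$: since $|\pi_i|^N < r_i = |x_i|$, we have $|\pi_i^N/x_i| < 1$, and since $|z_i| \le 1$ the ultrametric inequality gives $|z'_i| \le 1$. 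For the second, the same estimate $|\pi_i^N| < r_i$ shows that $\nu_r(s,z)$ and $\nu_r(s,z)+\pi^N$ lie in the same component $\mathbb{B}^s_r$ of the decomposition, which is precisely why the first coordinate of the output is still $s$.

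Combining,
\[
\nu_r^*(f\cdot\gamma)(x,(s,z)) = (f\cdot\gamma)(\nu_r(s,z),x) = f(\nu_r(s,z)+\pi^N,x) = f(\nu_r(s,z'),x) = \nu_r^*(f)(x,(s,z')),
\]
which is the claimed formula. There is no real obstacle here; the only thing to be careful about is the role of the bound $|\pi_i|^N < r_i$, which is used twice — once to ensure $z' \in \mathbb{B}_{\mathbf{1}}$ so that the right-hand side makes sense, and once to ensure that the translation $+\pi^N$ preserves the chosen component of $\mathbb{B}_r$.
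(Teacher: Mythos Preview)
Your proof is correct and is exactly the simple calculation the paper has in mind; the paper's own proof merely says ``This is a simple calculation'' and records the one nontrivial point you also flag, namely that the assumption $|\pi_j|^N < r_j$ keeps the translated point in the same connected component $\mathbb{B}^s_{r}$.
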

\begin{proof}This is a simple calculation. Note that the assumption on $N$ implies that $(z_ix_i+y_i+\pi^N)_{i\in I}$ remains in the connected component $\mathbb{B}^s_{r}$\end{proof}
\subsection{Dual modules}
Suppose $A$ is a Banach algebra. Given a Banach $A$-module $\mathbf{M}$ we define the \emph{dual} $\mathbf{M}^*$ to be the Banach $A$-module of continuous $A$-module morphisms from $\mathbf{M}$ to $A$, with the usual operator norm. We denote the $\OO(X)$-module $\mathscr{A}_{X,r}^*$ by $\mathscr{D}_{X,r}$.

If the map $\kappa$ corresponding to $X$ is $r|\pi^\alpha|$-analytic, then $\mathbb{M}_\alpha$ acts continuously on $\mathscr{A}_{X,r}$, so $\mathscr{D}_{X,r}$ has an $\OO(X)$-linear right action of the monoid $\mathbb{M}_\alpha^{-1}$ given by $(f\cdot m^{-1})(x):=f(x\cdot m)$, for $f\in \mathscr{D}_{X,r}$, $x \in \mathscr{A}_{X,r}$ and $m \in \mathbb{M}_\alpha$. If $U$ is as in Definition~\ref{ocformdef} then its projection to $\mathrm{GL}_2(F_p)$ is contained in $\mathbb{M}_\alpha \cap \mathbb{M}_\alpha^{-1}$, so it acts on  $\mathscr{D}_{X,r}$. This allows us to make the following definition:

\begin{definition}
For $X$, $\kappa$, $r$, $\alpha$ and $U$ as above, we define the space of \emph{dual} \\$r$-overconvergent automorphic forms of weight $X$ and level $U$ to be the $\OO(X)$-module
$$\mathbf{V}^D_{X}(U;r):=\mathscr{L}(U,\mathscr{D}_{X,r}).$$
\end{definition}

As in \ref{ocforms}, we have a norm preserving isomorphism \begin{equation}\mathbf{V}^D_{X}(U;r)\cong \bigoplus_{i=1}^r \mathscr{D}_{X,r}^{\Gamma_i}.\label{dualiso}\end{equation} Thus $\mathbf{V}^D_{X}(U;r)$ is a Banach $\OO(X)$-module, not necessarily satisfying the property $(Pr)$ (since $\mathscr{D}_{X,r}$ is not necessarily ONable). 

If $U$, $V$ are two compact open subgroups of $D_f^\times$ of wild level $\geq \pi^\alpha$, then for $\eta\in D_f^\times$ with $\eta_p \in \mathbb{M}_\alpha^{-1}$ we get double coset operators $[U\eta V]:\mathbf{V}^D_{X}(U;r) \rightarrow \mathbf{V}^D_{X}(V;r)$.

\subsection{Hecke operators}
For an ideal $\ai$ of $\OO_F$, we define the \emph{Hecke algebra away from} $\ai$, $\mathbb{T}^{(\ai)}$,  to be the free commutative $\OO(X)$-algebra generated by symbols $T_v, S_v$ for finite places $v$ of $F$ prime to $\ai$. If $\delta p$ divides $\ai$ then we can define the usual action of $\mathbb{T}^{(\ai)}$ by double coset operators on $\OC$: for $v\nmid \delta$ define $\varpi_v\in\A_f$ to be the finite adele which is $\pi_v$ at $\pi$ and $1$ at the other places. Abusing notation slightly, we also write $\varpi_v$ for the element of $D_f^\times$ which is $\begin{pmatrix}
\pi_v & 0\\0&\pi_v
\end{pmatrix}$ at $v$ and the identity elsewhere. Similarly set $\eta_v$ to be the element of $D_f^\times$ which is $\begin{pmatrix}
\pi_v & 0\\0&1
\end{pmatrix}$ at $v$ and the identity elsewhere. On $\OC$ we let $T_v$ act by $[U\eta_v U]$ and $S_v$ by $[U\varpi_v U]$.
Similarly on $\OCd$ we define $T_v$ to act by $[U\eta_v^{-1}U]$ and $S_v$ by $[U\varpi_v^{-1} U]$. 
We use different notation for Hecke operators at primes above $p$. If $p$ factors in $F$ as $\prod_{j \in J} \p_j^{e_j}$ then we let $U_j$ denote the Hecke operator $[U\eta_{\p_j}U]$ acting on $\OC$ and let $U_\pi$ denote $\prod_{j \in J} U_j$. It follows from \cite[Lemma 12.2]{Bu2} that $U_\pi$ is a norm-decreasing compact endomorphism of $\OC$.

As remarked above, for a classical weight $(k,w,\varepsilon)$ (with associated $n,v$), there is an embedding $$\iota: \mathscr{L}(U_0\cap U_1(\pi^\alpha),L_{n,v})(\varepsilon) \rightarrow \mathbf{S}^D_{(k,w,\varepsilon)}(U_0\cap U_0(\pi^\alpha);\mathbf{1}).$$

The well-known classicality criterion in this setting (for example, it is a special case of \cite[Theorem 3.9.6]{L}) tells us that generalised eigenvectors for the operator $U_\pi$ on the right hand side of the above map, with eigenvalue of sufficiently small slope, lie in the image of $\iota$. We do not need to recall the precise criterion here, it will be sufficient to record its consequences for Zariski density of classical points in the eigenvariety.

\begin{definition}\label{eigdef}
Let $\mathscr{E}^D(\n)$ be the reduction of the eigenvariety of tame level $U_1(\n)$ constructed (as in \cite{Bu2}) from the spaces of overconvergent forms defined in Definition \ref{ocformdef}, the compact operator $U_{\pi}$ and the Hecke algebra $\mathbb{T}^{(\delta p\n)}$. It comes equipped with a map $\mathscr{E}^D(\n) \rightarrow \mathscr{W}$. We say that a point of $\mathscr{E}^D(\n)$ is \emph{classical} if it corresponds to a system of Hecke eigenvalues arising in the image of $\iota$, with the caveat that we exclude points arising from $f=\iota(f_0)$ where $f_0 : D_f^\times \rightarrow L_{n,v}$ factors through the reduced norm $Nm:D_f^\times \rightarrow \A_{F,f}^\times$ (so $n=0$), and denote the set of classical points by $\mathscr{Z}^{cl}$. 

We say that a point $x$ of $\mathscr{E}^D(\n)$ is \emph{essentially classical} if there exists a $p$-adic character $\psi$ of $\Gal(\overline{F}/F)$, unramified above places not dividing $p\delta\n$, such that the twist by $\psi$ of the $p$-adic Galois representation associated to $x$ is isomorphic to the Galois representation associated to some cuspidal Hilbert modular form.
\end{definition}

The reason for our terminology is that the classical points correspond to cuspidal Hilbert modular eigenforms of cohomological weight. Note that we could formulate the definition of essentially classical purely in terms of Hecke eigenvalues, but it seems simplest to give the Galois theoretic description. We have to use the notion of essentially classical to avoid assuming Leopoldt's conjecture for $(F,p)$: the Zariski closure of the classical weights in $\mathscr{W}$ has dimension $1+g$ whilst $\mathscr{W}$ has dimension $1+g+\mathfrak{d}$. See also \cite{cheninf}, before the statement of Theorem $5.9$, for the same notion.

The following is a consequence of \cite[Part III]{Bu2} (see also \cite{cheninf}, following the statement of Theorem 5.9).

\begin{theorem}\label{Zardens}
The eigenvariety $\mathscr{E}^D(\n)$ is equidimensional of dimension $1+g+\mathfrak{d}$. The map $\mathscr{E}^D(\n) \rightarrow \mathscr{W}$ is locally finite, and $\mathscr{Z}^{ecl}$ is a Zariski dense subset of $\mathscr{E}^D(\n)$.
\end{theorem}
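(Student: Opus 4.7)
My plan is to deduce the structural claims directly from Buzzard's eigenvariety machine, and then establish Zariski density by combining a classicality criterion with twisting by Galois characters to defeat any Leopoldt defect. The input to \cite[Part III]{Bu2} is assembled from an admissible cover of $\mathscr{W}$ by reduced affinoids $X$: for each such $X$ (with its associated $\kappa$ suitably analytic) one has the Banach $\OO(X)$-module $\OC$ satisfying property $(Pr)$, as recorded after \eqref{iso}, together with the compact endomorphism $U_\pi$ and the commuting Hecke algebra $\mathbb{T}^{(\delta p\n)}$; Lemma~\ref{bc} supplies the base-change compatibility needed to glue the data. Running Buzzard's construction realises $\mathscr{E}^D(\n)$, after nilreduction, as a finite cover of the spectral Fredholm hypersurface of $U_\pi$, which is equidimensional of dimension $\dim\mathscr{W}=1+g+\mathfrak{d}$; equidimensionality and local finiteness over $\mathscr{W}$ transfer to $\mathscr{E}^D(\n)$.

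For the density statement I would first handle Zariski density of the \emph{classical} points above the locally algebraic locus in weight space. The classicality criterion (a special case of \cite[Theorem~3.9.6]{L}) says that at a locally algebraic weight $(\kk,w,\varepsilon)$ with $\kk\in\Z_{\ge 2}^I$ and $k_i\equiv w\pmod 2$, every generalised $U_\pi$-eigenvector of sufficiently small slope lies in the image of $\iota$. The standard Coleman--Mazur--Buzzard--Chenevier argument then applies: on an irreducible component $C$ of $\mathscr{E}^D(\n)$, intersected with an affinoid neighborhood of its image, the small-slope locus $\{v_p(U_\pi)\le h\}$ is Zariski open with admissible open image in weight space; the Zariski dense set of locally algebraic classical weights in the $(1+g)$-dimensional classical locus therefore supplies enough classical points in $C$ to force Zariski density in the preimage of that locus.

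The main obstacle is the Leopoldt defect: when $\mathfrak{d}>0$, the classical weights span only a $(1+g)$-dimensional subvariety of $\mathscr{W}$, so classical points alone cannot be dense in a $(1+g+\mathfrak{d})$-dimensional eigenvariety. I would resolve this, following the discussion surrounding \cite[Theorem~5.9]{cheninf}, by twisting. For any continuous $p$-adic character $\psi$ of $\Gal(\overline F/F)$ unramified outside $p\delta\n$, tensoring the associated Galois representations by $\psi$ induces an automorphism of $\mathscr{E}^D(\n)$ that translates weight-space coordinates by the character of $\OO_p^\times\times\OO_p^\times$ attached to $\psi|_{\OO_p^\times}$ through local class field theory. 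The rigid space of such $\psi$ has dimension $1+\mathfrak{d}$, and its image in the character group of $\OO_p^\times\times\OO_p^\times$ precisely fills the $\mathfrak{d}$ ``Leopoldt directions'' of $\mathscr{W}$ missed by varying classical weights. Consequently the orbit of the classical locus under these twists is Zariski dense in $\mathscr{W}$, and the resulting points of $\mathscr{E}^D(\n)$ are essentially classical by definition, yielding the required density of $\mathscr{Z}^{ecl}$.
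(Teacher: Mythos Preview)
Your proposal is correct and matches the paper's approach: the paper does not give its own proof but simply records the theorem as ``a consequence of \cite[Part III]{Bu2} (see also \cite{cheninf}, following the statement of Theorem 5.9)'', which is precisely the combination of Buzzard's eigenvariety machine for the structural claims and Chenevier's twisting argument for Zariski density of essentially classical points that you have spelled out.
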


\subsection{A pairing}
In this section $X$, $\kappa$, $r$, $\alpha$ and $U$ will be as in Definition~\ref{ocformdef}. We will denote by $V$ another compact open subgroup of wild level $\geq \pi^\alpha$. We fix double coset representatives $\{d_i : 1\leq i \leq r\}$ for the double quotient $D^\times \backslash D_f^\times / U$. 

\begin{definition}For $x \in D_f^\times$ we denote by $\gamma_U(x)$ the rational number \[\frac{[x^{-1}D^\times x \cap U:\OO_F^{\times,+} \cap U]}{[\OO_F^{\times,+}:\OO_F^{\times,+} \cap U]}\] where $\OO_F^{\times,+}$ denotes the group of totally positive units.
\end{definition}
\begin{lemma}\label{meascalc}
For $u \in U$, $d \in D^\times$ and $x,g \in D_f^\times$ we have \begin{itemize}\item $\gamma_U(dxu)=\gamma_U(x)$
\item $\gamma_U(x)=\gamma_{gUg^{-1}}(xg^{-1})$
\item if $U' \subset U$ we have \[\frac{\gamma_U(x)}{\gamma_{U'}(x)}=[x^{-1}D^\times x \cap U:x^{-1}D^\times x \cap U']\]\end{itemize} 
\end{lemma}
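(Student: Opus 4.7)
The plan is to exploit the fact that $\OO_F^{\times,+}$ sits in the centre of $D^\times$ throughout; under this observation all three bullets reduce to straightforward manipulations with indices of subgroups.

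For the first bullet, I would simply unpack $(dxu)^{-1}D^\times(dxu) = u^{-1}(x^{-1}D^\times x)u$, using that $d^{-1}D^\times d = D^\times$. Then, because $u\in U$ and centrality gives $u^{-1}(\OO_F^{\times,+}\cap U)u = \OO_F^{\times,+}\cap U$, conjugation by $u^{-1}$ takes the pair $(x^{-1}D^\times x \cap U,\, \OO_F^{\times,+}\cap U)$ isomorphically onto $((dxu)^{-1}D^\times(dxu)\cap U,\, \OO_F^{\times,+}\cap U)$, so the numerator index is unchanged. The denominator does not depend on $x$ at all, so we are done.

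For the second bullet, the computation is essentially the same: $(xg^{-1})^{-1}D^\times(xg^{-1}) = g(x^{-1}D^\times x)g^{-1}$, so intersecting with $gUg^{-1}$ gives $g\bigl(x^{-1}D^\times x\cap U\bigr)g^{-1}$, and centrality of $\OO_F^{\times,+}$ gives $\OO_F^{\times,+}\cap gUg^{-1} = \OO_F^{\times,+}\cap U$. Conjugation by $g^{-1}$ is an isomorphism of pairs, so both the numerator and denominator of $\gamma_{gUg^{-1}}(xg^{-1})$ match those of $\gamma_U(x)$.

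For the third bullet, I would just push indices around using the standard multiplicativity $[A:C] = [A:B][B:C]$. Writing the ratio $\gamma_U(x)/\gamma_{U'}(x)$ as a product of four indices, and inserting the chain
\[ \OO_F^{\times,+}\cap U' \;\subset\; \OO_F^{\times,+}\cap U \;\subset\; x^{-1}D^\times x \cap U \]
on the numerator side together with the chain
\[ \OO_F^{\times,+}\cap U' \;\subset\; x^{-1}D^\times x \cap U' \;\subset\; x^{-1}D^\times x \cap U \]
on the denominator side (valid because $\OO_F^{\times,+}\subset D^\times$ by centrality), everything telescopes to $[x^{-1}D^\times x \cap U : x^{-1}D^\times x \cap U']$. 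There is no genuine obstacle here; the only thing to watch is that at every step the relevant containments really hold, and these all follow from $\OO_F^{\times,+}$ being in the centre of $D^\times$.
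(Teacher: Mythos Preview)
Your proposal is correct and follows essentially the same approach as the paper: conjugation by $u$ (respectively $g$) for the first two items, and a telescoping index computation using multiplicativity $[A:C]=[A:B][B:C]$ for the third. The paper's proof is slightly terser but uses the same chains of subgroups and the same implicit reliance on the centrality of $\OO_F^{\times,+}$ that you make explicit.
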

\begin{proof}
For the first item, observe that conjugation by $u$ maps the coset space $x^{-1}D^\times x \cap U/\OO_F^{\times,+} \cap U$ bijectively to $(dxu)^{-1}D^\times dxu \cap U/\OO_F^{\times,+} \cap U.$

Similarly the second item follows from considering conjugation by $g$.

Finally, we have \begin{align*}\frac{\gamma_U(x)}{\gamma_{U'}(x)}&=\frac{[x^{-1}D^\times x \cap U:\OO_F^{\times,+} \cap U]}{[\OO_F^{\times,+}:\OO_F^{\times,+} \cap U]}\frac{[\OO_F^{\times,+}:\OO_F^{\times,+} \cap U']}{[x^{-1}D^\times x \cap U':\OO_F^{\times,+} \cap U']}\\&=[\OO_F^{\times,+} \cap U:\OO_F^{\times,+} \cap U']\frac{[x^{-1}D^\times x \cap U:\OO_F^{\times,+} \cap U]}{[x^{-1}D^\times x \cap U':\OO_F^{\times,+} \cap U']}\\ &=\frac{[x^{-1}D^\times x \cap U:\OO_F^{\times,+} \cap U']}{[x^{-1}D^\times x \cap U':\OO_F^{\times,+} \cap U']}=[x^{-1}D^\times x \cap U:x^{-1}D^\times x \cap U'].\end{align*}
\end{proof}
\begin{remark}
The quantity $\gamma_U(x)$ is the natural measure of the size of the group $x^{-1}D^\times x \cap U$, which is commensurable with $\OO_F^{\times,+}$. If $F=\Q$ we have $\OO_F^{\times,+}$ trivial and $\gamma_U(x)$ is the cardinality of the finite group $x^{-1}D^\times x \cap U$.
\end{remark}

We define an $\OO(X)$-bilinear pairing between the spaces $\OC$ and $\OCd$ by
\begin{equation}\label{pair}
\langle f,\lambda \rangle:=\sum_{i=1}^r \gamma_U(d_i)^{-1}\langle f(d_i), \lambda(d_i)\rangle,\end{equation}
where $f \in \OC$, $\lambda \in \OCd$ and on the right hand side of the above definition $\langle\cdot , \cdot \rangle$ denotes the pairing between $\mathscr{A}_{X,r}$ and $\mathscr{D}_{X,r}$ given by evaluation.

This pairing is independent of the choice of the double coset representatives $d_i$, since for every $d \in D^\times$, $g \in D^\times_f$, $u \in U$, $f\in \OC$ and $\lambda \in \OCd$ we have $$\langle f(dgu),\lambda(dgu)\rangle=\langle f(g)u_p,\lambda(g)u_p \rangle=\langle f(g)u_p u_p^{-1},\lambda(g)\rangle=\langle f(g),\lambda(g)\rangle$$ and $\gamma_U(g)=\gamma_U(dgu)$. Combining this observation with the isomorphisms (\ref{iso}) and (\ref{dualiso}) we see that our pairing identifies $\OCd$ with $\OC^*$.

The following proposition is a mild generalisation of \cite[Proposition 3]{chicomp}. We give the proof here since the proof in loc. cit. omits a factor. 
\begin{proposition}\label{prop:pairinghecke}
Let $f \in \OC$ and let $\lambda\in \mathbf{V}^D_X(V;r)$. Let $g \in D^\times_f$ with $g_p\in\mathbb{M}_\alpha$. Then $$\langle f|[U g V],\lambda\rangle=\langle f,\lambda|[Vg^{-1}U]\rangle.$$
\end{proposition}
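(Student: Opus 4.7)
The plan is to factor each Hecke operator as a composition of a translation by $g^{\pm 1}$ with a ``trace'' between pairs of groups, and to verify pairing-adjointness of the translation and trace operators separately. Set $V' := V \cap g^{-1}Ug$ and $U' := U \cap gVg^{-1}$, so that $U' = gV'g^{-1}$. Picking coset decompositions $V = \coprod_i V'v_i$ and $U = \coprod_j U'u_j$, we have $UgV = \coprod_i U(gv_i)$ and $Vg^{-1}U = \coprod_j V(g^{-1}u_j)$, and hence
\[ f|[UgV] = \mathrm{tr}_{V'}^V(f|g) := \sum_i (f|g)|v_i, \qquad \lambda|[Vg^{-1}U] = \mathrm{tr}_{U'}^U(\lambda|g^{-1}) := \sum_j (\lambda|g^{-1})|u_j. \]
Note that $f|g \in \mathscr{L}(V',\mathscr{A}_{X,r})$ and $\lambda|g^{-1} \in \mathscr{L}(U',\mathscr{D}_{X,r})$, since $g$ conjugates $V'$ to $U'$ and since $g_p \in \mathbb{M}_\alpha$, $g_p^{-1} \in \mathbb{M}_\alpha^{-1}$ give well-defined actions on the respective modules.

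For the translation step, I would prove $\langle f|g, \lambda \rangle_{V'} = \langle f, \lambda|g^{-1} \rangle_{U'}$, where the subscripts indicate which subgroup's double-coset representatives are summed over. The key ingredient is the pointwise identity $\langle a \cdot m, \phi\rangle = \langle a, \phi \cdot m^{-1}\rangle$ on $\mathscr{A}_{X,r} \times \mathscr{D}_{X,r}$ for $m \in \mathbb{M}_\alpha$, which is built into the definition of the dual monoid action. Combined with the formulas $(f|g)(d) = f(dg^{-1})g_p$ and $(\lambda|g^{-1})(dg^{-1}) = \lambda(d)\cdot g_p^{-1}$ this gives the pointwise equality $\langle (f|g)(d), \lambda(d)\rangle = \langle f(dg^{-1}), (\lambda|g^{-1})(dg^{-1})\rangle$. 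Right multiplication by $g^{-1}$ identifies representatives for $D^\times\backslash D_f^\times / V'$ with those for $D^\times\backslash D_f^\times / U'$, and the second item of Lemma~\ref{meascalc} gives $\gamma_{V'}(d) = \gamma_{U'}(dg^{-1})$; summing term by term finishes the translation step.

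For the trace step, I would prove $\langle f, \mathrm{tr}_{U'}^U(\lambda)\rangle_U = \langle f, \lambda\rangle_{U'}$ (for $f \in \mathscr{L}(U,\mathscr{A}_{X,r})$ and $\lambda \in \mathscr{L}(U',\mathscr{D}_{X,r})$), together with the analogous statement for $V' \subset V$. Unfolding and using $U$-invariance of $f$ converts each summand on the left to $\gamma_U(d_a)^{-1}\langle f(d_au_j^{-1}), \lambda(d_au_j^{-1})\rangle$, where $\{d_a\}$ are $U$-double-coset representatives. Each $U$-double coset $D^\times d_a U$ subdivides into $U'$-double cosets parametrised by the finite quotient $U'\backslash U/K_a$ with $K_a := d_a^{-1}D^\times d_a \cap U$, so each $U'$-double-coset representative $d_a u^{-1}$ appears in the sum with multiplicity $[K_a : K_a \cap u^{-1}U'u]$ (the pairing value being constant on each class by $D^\times$-invariance). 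The second and third items of Lemma~\ref{meascalc} combine to give $\gamma_{U'}(d_au^{-1})^{-1} = \gamma_U(d_a)^{-1} \cdot [K_a : K_a \cap u^{-1}U'u]$, matching this multiplicity exactly. Chaining the two adjunctions via the factorisation yields the proposition. The main obstacle is this combinatorial bookkeeping of rational measure factors when passing between subgroups: the infinite stabilisers $d^{-1}D^\times d \cap U$ (commensurable with $\OO_F^{\times,+}$) have to be handled via index calculations, and Lemma~\ref{meascalc} is tailored precisely to make the comparison work.
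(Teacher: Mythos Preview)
Your argument is correct. The underlying ingredients are identical to the paper's proof --- the pointwise adjunction $\langle a\cdot m,\phi\rangle=\langle a,\phi\cdot m^{-1}\rangle$, the bijection $d\mapsto dg^{-1}$ between $V'$- and $U'$-double cosets, and the fibre count via Lemma~\ref{meascalc} --- but you organise them differently. The paper carries out a single chain of equalities: it expands the left-hand side as a double sum over $D^\times\backslash D_f^\times/V$ and over $(g^{-1}Ug\cap V)\backslash V$, then substitutes $x=dv^{-1}$ (collapsing to a sum over $D^\times\backslash D_f^\times/(g^{-1}Ug\cap V)$ with the index factor coming from Lemma~\ref{meascalc}), then substitutes $y=xg^{-1}$, and finally reverses the same steps to reach the right-hand side. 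You instead factor $[UgV]$ as $\mathrm{tr}_{V'}^V\circ(-|g)$ and prove the translation and trace adjunctions as standalone statements before chaining them. Your packaging is a bit more modular and makes the role of each item of Lemma~\ref{meascalc} transparent; the paper's version is terser. Neither approach requires any idea the other lacks.
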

\begin{proof}
We have $$f|[Ug V]=\sum_{v\in (g^{-1} U\eta)\cap V \backslash V}f|(g v),$$ hence \begin{eqnarray*}\langle f|[Ug V],\lambda\rangle&=&\sum_{d\in D^\times\backslash D^\times_f/V} \gamma_V(d)^{-1}\langle f|[Ug V](d),\lambda(d)\rangle\\
&=&\sum_{d\in D^\times\backslash D^\times_f/V}\sum_{v\in (g^{-1} Ug)\cap V \backslash V} \gamma_V(d)^{-1}\langle f|(g v)(d),\lambda(d)\rangle\\
&=&\sum_{d\in D^\times\backslash D^\times_f/V}\sum_{v\in (g^{-1} Ug)\cap V \backslash V} \gamma_V(dv^{-1})^{-1}\langle f(dv^{-1}g^{-1})\cdot g_p v_p,\lambda(d)\rangle\\
&=&\sum_{x\in D^\times\backslash D^\times_f/(g^{-1} Ug)\cap V}\gamma_{(g^{-1} Ug)\cap V}(x)^{-1}\langle f(xg^{-1}),\lambda(x)\cdot g_p^{-1}\rangle\\
&=&\sum_{y\in D^\times\backslash D^\times_f/U\cap(g V g^{-1})}\gamma_{U\cap (gVg^{-1})}(y)^{-1}\langle f(y),\lambda(yg)\cdot g_p^{-1}\rangle\\
&=&\langle f,\lambda|[Vg^{-1}U]\rangle
\end{eqnarray*}where we pass from the third line to the fourth line by substituting $x = dv^{-1}$ and observing that under the map \begin{align*}P:D^\times\backslash D^\times_f/V \times (g^{-1} Ug)\cap V \backslash V &\rightarrow D^\times\backslash D^\times_f/(g^{-1} Ug)\cap V\\(d,v)&\mapsto dv^{-1}\end{align*} a fibre $P^{-1}(x)$ has cardinality $\gamma_V(x)/\gamma_{(g^{-1} Ug)\cap V}(x)$ (applying the final part of lemma \ref{meascalc}). For the next line we substitute $y = xg^{-1}$ and apply the second part of lemma \ref{meascalc}. The final line follows by similar calculations to the first $5$ lines.
\end{proof}
\subsection{Slope decompositions}
Let $X \rightarrow \mathscr{W}$ be a reduced affinoid and denote by $F(T)$ the characteristic power series of $U_\pi$ acting on $\OC$. Suppose we have a factorisation $F(T)=Q(T)S(T)$ with $Q(T)$ a polynomial in $1 + T\OO(X)[T]$ of degree $n$ with unit leading coefficient, such that $Q$ and $S$ are coprime. Denote by $Q*(T)$ the polynomial $T^nQ(T^{-1})$. Then, applying \cite[Theorem 3.3]{Bu2}, we have a $U_\pi$ stable decomposition $$\OC=\OC^{Q}\oplus \mathbf{N},$$ where $Q^*(U_\pi)$ is zero on $\OC^{Q}$ and invertible on $\mathbf{N}$. Its formation commutes with base change: for a reduced affinoid $Y \rightarrow X$ we write $Q_Y$ for the polynomial obtained from $Q$ by applying the map $\OO(X)\rightarrow \OO(Y)$ to its coeffients. Then we have $$\OCY=\OCY^{Q_Y}\oplus \mathbf{N}_Y,$$ and we can identify $\OCY^{Q_Y}$ with $\OC^{Q}\otimes_{\OO(X)}\OO(Y)$.

The space $\OC^{Q}$ is a projective finitely generated $\OO(X)$-module. The decomposition into $\OC^Q$ and $\mathbf{N}$ is stable under the action of $\mathbb{T}^{(\delta p)}$, since the $T_v$ and $S_v$ operators for $v \nmid p$ commute with $U_\pi$. 

Recall that we have used the pairing (\ref{pair}) to identify $\OCd$ and the $\OO(X)$-dual of $\OC$. We define the submodule $\OCd^{Q}$ of $\OCd$ to be those maps from $\OC$ to $\OO(X)$ which are $0$ on $\mathbf{N}$. This space is also stable under the action of $\mathbb{T}^{(\delta p)}$ and is naturally isomorphic to the $\OO(X)$-dual of $\OC^{Q}$. The pairing (\ref{pair}) is perfect when restricted to $\OC^{Q}\times \OCd^{Q}$, by \cite[Lemma 4]{chicomp}. 

\begin{lemma}\label{bcimm}
Suppose $Y\hookrightarrow X$ is a closed immersion of reduced affinoids. Then we can identify $\OCdY^{Q}$ with $\OCd^{Q}\otimes_{\OO(X)}\OO(Y)$. In particular, they are isomorphic as Hecke modules.
\end{lemma}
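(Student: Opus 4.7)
The plan is to exploit the perfect pairing together with the fact that $\mathbf{S}^D_X(U;r)^Q$ is a finitely generated projective $\OO(X)$-module, so that its formation commutes with base change and taking $\OO(X)$-linear duals does too.

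First I would unwind the definitions: the pairing (\ref{pair}), after restriction to the $Q$-part, gives a perfect pairing $\OC^{Q} \times \OCd^{Q} \to \OO(X)$ (this is \cite[Lemma 4]{chicomp}, cited just above), hence an isomorphism of $\OO(X)$-modules
\[
\OCd^{Q} \xrightarrow{\;\sim\;} \Hom_{\OO(X)}\bigl(\OC^{Q},\,\OO(X)\bigr),
\]
and this is Hecke-equivariant because the double coset operators are defined by natural formulas compatible with the pairing (as in Proposition \ref{prop:pairinghecke}). The analogous statement holds after base change to $Y$ for the polynomial $Q_Y$.

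Next I would combine two standard facts. Base change for the $Q$-part was recorded in the paragraph preceding Lemma \ref{bc}: one has $\OCY^{Q_Y} \cong \OC^{Q}\otimes_{\OO(X)}\OO(Y)$. Since $\OC^{Q}$ is finitely generated projective over $\OO(X)$, the natural map
\[
\Hom_{\OO(X)}\bigl(\OC^{Q},\,\OO(X)\bigr)\otimes_{\OO(X)}\OO(Y) \longrightarrow \Hom_{\OO(Y)}\bigl(\OC^{Q}\otimes_{\OO(X)}\OO(Y),\,\OO(Y)\bigr)
\]
is an isomorphism. Chaining these identifications produces
\[
\OCd^{Q}\otimes_{\OO(X)}\OO(Y)\ \cong\ \Hom_{\OO(Y)}\bigl(\OCY^{Q_Y},\,\OO(Y)\bigr)\ \cong\ \OCdY^{Q_Y},
\]
where the last isomorphism is the perfect pairing for $Y$ run in reverse. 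Hecke equivariance is preserved at every step, so this is an isomorphism of $\mathbb{T}^{(\delta p)}$-modules.

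One point that needs a brief remark is the comparison of completed and uncompleted tensor products: for a closed immersion $Y \hookrightarrow X$ the map $\OO(X)\to\OO(Y)$ is surjective, and because $\OC^{Q}$ is finitely generated over $\OO(X)$ the two tensor products agree, so there is no subtlety in transferring the base change statement from Lemma \ref{bc} (which is phrased for completed tensor products on the ambient Banach modules) to the projective finitely generated pieces. No step looks genuinely difficult here; the only thing to be careful about is to systematically use that projectivity and finite generation of $\OC^{Q}$ is what makes both the $\Hom$ identification and the interchange of $\Hom$ with base change automatic.
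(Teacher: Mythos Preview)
Your proposal is correct and follows essentially the same approach as the paper: both reduce the lemma to the fact that for the finitely generated projective $\OO(X)$-module $M=\OC^{Q}$, the natural map $\Hom_{\OO(X)}(M,\OO(X))\otimes_{\OO(X)}\OO(Y)\to\Hom_{\OO(Y)}(M\otimes_{\OO(X)}\OO(Y),\OO(Y))$ is an isomorphism. The only difference is presentational---the paper verifies this standard fact explicitly by applying the exact functor $\Hom_{\OO(X)}(M,-)$ to the short exact sequence $0\to I\to\OO(X)\to\OO(Y)\to 0$ (where $I=\ker(\OO(X)\to\OO(Y))$), whereas you invoke it as a known property of finitely generated projective modules.
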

\begin{proof}
Denote the finitely generated projective $\OO(X)$-module $\OC^{Q}$ by $M$ and denote by $I$ the kernel of $\OO(X)\twoheadrightarrow\OO(Y)$. 

Since we have already identified $\OCd^{Q}$ with the dual of $M$, to prove the lemma we need to show that we can identify $\Hom_{\OO(Y)}(M\otimes_{\OO(X)}\OO(Y),\OO(Y))$ and $\Hom_{\OO(X)}(M,\OO(X))\otimes_{\OO(X)} \OO(Y)$.

We apply the exact functor $\Hom_{\OO(X)}(M,-)$ to the short exact sequence of $\OO(X)$-modules \[0\rightarrow I \rightarrow \OO(X) \rightarrow \OO(Y)\rightarrow 0\] to obtain a short exact sequence \[0 \rightarrow\Hom_{\OO(X)}(M,I) \rightarrow \Hom_{\OO(X)}(M,\OO(X))\rightarrow \Hom_{\OO(X)}(M,\OO(Y))\rightarrow 0.\] Since $M$ is finitely generated, the quotient $\Hom_{\OO(X)}(M,\OO(X))/\Hom_{\OO(X)}(M,I)$ can be identified with  $\Hom_{\OO(X)}(M,\OO(X))\otimes_{\OO(X)} \OO(Y)$.

Since $\Hom_{\OO(X)}(M,\OO(Y)) = \Hom_{\OO(Y)}(M\otimes_{\OO(X)}\OO(Y),\OO(Y))$, we deduce the lemma.
\end{proof}

\subsection{Old and new}\label{oldnew}
Fix a non-zero ideal $\n$ of $\OO_F$ (the tame level) coprime to $p$ and fix another finite place $\l \nmid \n p\delta$. Let $X\rightarrow \mathscr{W}$ be a reduced affinoid. We assume $X$ is such that the associated character $\kappa$ is $r|\pi^{\alpha}|$ analytic, for some $\alpha \in \Z_{\ge 1}^J$ and $r \in (\mathscr{N}_K)^J$. Set $U=\UU$, $V=\VV$. To simplify notation we set \begin{eqnarray*}L&:=&\OC^{Q}\\L^*&:=&\OCd\s \\M&:=&\OCV\s\\M^*&:=&\OCdV\s.\end{eqnarray*} 

We define a map $i:L \times L \rightarrow M$ by
$$i(f,g):=f|[U 1 V]+g|[U \eta_\l V].$$ Since the map $i$ is defined by double coset operators with trivial component at all places dividing $p$ it commutes with $U_\pi$ and thus gives a well defined map between these spaces of $Q$-bounded forms.

Regarding $f$ and $g$ as functions on $D^\times_f$ we have $f|[U 1 V]=f$, $g|[U\eta_\l V]=g|\eta_\l$. The image of $i$ inside $M$ will be referred to as the space of \emph{oldforms}.

We also define a map $i^\dagger:M \rightarrow L \times L$ by
$$i^\dagger (f):=(f|[V 1 U],f|[V \eta_\l^{-1} U]).$$ We regard the kernel of $i^\dagger$ as a space of $\l-$new forms. The maps $i$ and $i^\dagger$ commute with Hecke operators $T_v, S_v$, where $v \nmid \n p\l\delta$.

The same double coset operators give maps $$j:L^* \times L^* \rightarrow M^*,$$ $$j^\dagger: M^* \rightarrow L^* \times L^*.$$

Using Proposition \ref{prop:pairinghecke} we have 
$$\langle i(f,g),\lambda\rangle = \langle (f,g), j^\dagger \lambda \rangle$$
for $f,g\in L$, $\lambda \in M^*$. Similarly
$$\langle f,j(\lambda,\mu)\rangle = \langle i^\dagger f, (\lambda,\mu) \rangle$$ for $d \in M$, $\lambda,\mu \in L^*$.

A calculation shows that $i^\dagger i$ acts on the product $L\times L=L^2$ by the matrix (acting on the right) $$\begin{pmatrix}\mathbf{N}\l+1 & [U\varpi_\l^{-1}U][U\eta_\l U] \\ [U\eta_l U] & \mathbf{N}\l+1 \end{pmatrix}=\begin{pmatrix}\N\l+1 & S_\l^{-1}T_\l \\ T_\l & \N\l+1 \end{pmatrix}.$$ We have exactly the same double coset operator formula for the action of $j^\dagger j$ on the product $L^*\times L^* = L^{*2}$. The Hecke operators $S_\l, T_\l$ act by $[U\varpi_\l^{-1}U], [U\eta_\l^{-1} U]$ respectively on $L^*$. Also, the double coset $U\eta_\l U$ is the same as $U\varpi_\l \eta_\l^{-1}U$, since the matrix which is the identity at every factor except $\l$ and $\begin{pmatrix}
0 & 1\\1&0
\end{pmatrix}$ at $\l$ is in U. From these two facts we deduce that, in terms of Hecke operators, $j^\dagger j$ acts on $L^*\times L^*$ by the matrix (again acting on the right) $$\begin{pmatrix}\N\l+1 & T_\l \\ S_\l^{-1}T_\l & \N\l+1 \end{pmatrix}.$$

\begin{lemma}\label{chentf}
Suppose $A$ is an integral domain, which we assume to be normal and equidimensional of dimension $d$. Suppose $B$ is an $A$-algebra which is integral over $A$ and torsion free. Then $B$ is equidimensional of dimension $d$.
\end{lemma}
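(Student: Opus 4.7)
The plan is to reduce to showing that for every minimal prime $\mathfrak{p}$ of $B$, the contraction $\mathfrak{p} \cap A$ is zero; once I have this, integrality will transfer $\dim A = d$ to each quotient $B/\mathfrak{p}$ via Cohen--Seidenberg. First I would note that torsion-freeness of $B$ over the domain $A$ forces the structure map $A \to B$ to be injective (the degenerate case $B = 0$ being vacuous), so I may regard $A$ as a subring of $B$.

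Fix a minimal prime $\mathfrak{p}$ of $B$. The key claim is $\mathfrak{p} \cap A = 0$. To see this I would invoke the standard fact that in any commutative ring, every element of a minimal prime is nilpotent modulo some element outside that prime: the localization $B_\mathfrak{p}$ has a unique prime ideal $\mathfrak{p}B_\mathfrak{p}$, which is therefore the nilradical of $B_\mathfrak{p}$. Hence for $a \in \mathfrak{p} \cap A$ there exist $s \in B \setminus \mathfrak{p}$ and $n \ge 1$ with $s a^n = 0$ in $B$. Since $\mathfrak{p}$ is prime, $s \ne 0$, and so torsion-freeness of $B$ over $A$ forces $a^n = 0$; because $A$ is a domain, $a = 0$.

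With $\mathfrak{p} \cap A = 0$ established, the composite $A \hookrightarrow B \twoheadrightarrow B/\mathfrak{p}$ is an injection of $A$ into a domain, and $B/\mathfrak{p}$ is integral over $A$. Incomparability and going-up (both valid for any integral extension) then give $\dim(B/\mathfrak{p}) = \dim A = d$. Since this holds for every minimal prime $\mathfrak{p}$ of $B$, the ring $B$ is equidimensional of dimension $d$.

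The step I expect to require the most care is the claim $\mathfrak{p} \cap A = 0$: this is where the torsion-freeness hypothesis is genuinely used, since $B$ itself need not be a domain and could in principle have many minimal primes. Without torsion-freeness one could easily have non-zero elements of $A$ sitting in a minimal prime of $B$, and the dimension conclusion would fail. I note in passing that normality of $A$ does not seem to enter the argument at all; it is presumably listed as a standing hypothesis convenient for the intended application of the lemma rather than as a tool in the proof.
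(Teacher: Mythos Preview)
Your argument is correct, and it actually differs in a meaningful way from the paper's proof. The paper deduces $\mathfrak{p}\cap A=0$ for each minimal prime $\mathfrak{p}\subset B$ by invoking the going-down theorem in the form given in \cite[Ch.~V, Theorem~6]{ZS}: if $A$ is normal, $B$ is integral over $A$, and no non-zero element of $A$ is a zerodivisor in $B$, then going-down holds; applied to a minimal $\mathfrak{p}$ this forces its contraction to be a minimal prime of the domain $A$, hence zero. So in the paper's route the normality hypothesis is genuinely used, contrary to your guess that it is merely a convenient standing assumption.

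Your route bypasses going-down entirely. The observation that $B_\mathfrak{p}$ has $\mathfrak{p}B_\mathfrak{p}$ as its nilradical, together with torsion-freeness, already pins down $\mathfrak{p}\cap A=0$ with no appeal to normality of $A$. After that, both arguments finish the same way (going-up plus incomparability on the integral extension $A\hookrightarrow B/\mathfrak{p}$). Thus your proof is slightly more general: it establishes the lemma without the normality hypothesis on $A$. For the paper's applications (where $A=\OO(X)$ for a smooth connected affinoid $X\subset\mathscr{W}$) normality is automatic, so nothing is gained there, but it is worth knowing that the result holds more broadly.
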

\begin{proof}
Recall that a Noetherian ring $R$ (of finite Krull dimension $d$) is equidimensional of dimension $d$ if $R/\p$ has Krull dimension $d$ for all minimal prime ideals $\p$ of $R$. The lemma follows from applying the going up and going down theorems. Note that we do not need to assume that $B$ is an integral domain --- the $A$-torsion free condition suffices for going down (see, for example, \cite[Ch. V, Theorem 6]{ZS}).
\end{proof}
\begin{proposition}\label{propinj} Suppose that $X$ is an admissible affinoid open in $\mathscr{W}$. Then the map $i^\dagger i$ is injective. \end{proposition}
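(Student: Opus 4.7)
The plan is to reduce injectivity of $i^\dagger i$ to the claim that $D := (\N\l+1)^2 S_\l - T_\l^2$ acts as a non-zero-divisor on $L$. Setting the two coordinates of $(l_1, l_2)\cdot M$ equal to zero (where $M$ is the matrix for $i^\dagger i$ displayed above) and eliminating $l_1$ by multiplying the first equation by $T_\l$ and the second by $(\N\l+1)S_\l$ gives $D \cdot l_2 = 0$, and symmetrically $D \cdot l_1 = 0$. Hence it suffices to show that $D$ kills no nonzero element of $L$.

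Working connected-component by connected-component of $X$: since $\mathscr{W}$ is smooth, $\OO(X)$ is a product of normal Noetherian integral domains, and I may assume $A := \OO(X)$ is itself a normal integral domain of dimension $1+g+\mathfrak{d}$. Let $\mathbb{T} \subseteq \End_A(L)$ denote the image of the Hecke algebra generated by the $T_v$, $S_v$ (for $v\nmid \delta p \n$) and $U_\pi$. Since $L$ is finitely generated and projective over $A$, $\mathbb{T}$ is a finite, commutative, $A$-torsion-free $A$-algebra, and hence by Lemma~\ref{chentf} is equidimensional of dimension $1+g+\mathfrak{d}$. Moreover, the eigenvariety construction identifies $\Spec(\mathbb{T}^{\mathrm{red}})$ with an admissible affinoid open of $\mathscr{E}^D(\n)$, so Theorem~\ref{Zardens} implies that every irreducible component of $\Spec(\mathbb{T})$ has a Zariski-dense set of essentially classical points.

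The central local input is that $D$ does not vanish at any essentially classical point $\phi$: such a $\phi$ arises, up to twist by a Galois character $\psi$, from a classical cuspidal Hilbert modular eigenform $\phi_0$ whose tame level $U_1(\n)$ is hyperspecial at $\l$, so its local component at $\l$ is an irreducible unramified principal series. The equation $T_\l(\phi_0)^2 = (\N\l+1)^2 S_\l(\phi_0)$ would force the ratio of Satake parameters at $\l$ to be $\N\l^{\pm 1}$, making this principal series reducible --- a contradiction. Twisting by $\psi$ scales $T_\l$ by $\psi(\mathrm{Frob}_\l)$ and $S_\l$ by $\psi(\mathrm{Frob}_\l)^2$, so the relation is preserved, and thus $D(\phi) \neq 0$. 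Combining with the density statement, $D$ lies in no minimal prime of $\mathbb{T}$; passing to the Artinian generic fibre $\mathbb{T}_\eta := \mathbb{T} \otimes_A \mathrm{Frac}(A)$ --- whose primes correspond bijectively to the minimal primes of $\mathbb{T}$, since $\mathbb{T}$ is $A$-torsion-free and integral over the domain $A$ --- $D$ becomes a unit. Hence $D$ acts invertibly on $L_\eta := L \otimes_A \mathrm{Frac}(A)$, and since $L$ is $A$-torsion-free the natural map $L \hookrightarrow L_\eta$ is injective, so $D$ is injective on $L$.

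The main obstacle is the transfer of Zariski density: I need the precise identification of $\Spec(\mathbb{T}^{\mathrm{red}})$ with an admissible open of the reduced eigenvariety $\mathscr{E}^D(\n)$, so that density of essentially classical points in $\mathscr{E}^D(\n)$ restricts to density in each irreducible component of $\Spec(\mathbb{T})$. The representation-theoretic step at $\l$, once set up, is elementary.
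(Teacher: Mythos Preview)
Your proof is correct and follows essentially the same strategy as the paper: reduce to showing that $D = T_\l^2 - (\N\l+1)^2 S_\l$ kills no nonzero element of $L$, then argue via equidimensionality of the Hecke algebra over $\OO(X)$ (Lemma~\ref{chentf}) together with Zariski density of essentially classical points (Theorem~\ref{Zardens}) that $D$ cannot vanish on any irreducible component.

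The differences are purely in packaging. The paper argues by contradiction: assuming $\ker(i^\dagger i)\neq 0$, its first projection $L_1\subset L$ is a nonzero Hecke-stable, $\OO(X)$-torsion-free submodule on which $D$ acts by zero; the reduced Hecke algebra $\cH_{L_1}^{\mathrm{red}}$ then gives a closed immersion into $\cH_L^{\mathrm{red}}$ between equidimensional affinoids of the same dimension, whence (citing \cite{CM}, \cite{conirr}) its image is a union of irreducible components of the eigenvariety on which $D$ vanishes. You instead work directly, showing $D$ lies in no minimal prime of $\mathbb{T}$ and passing to the Artinian generic fibre $\mathbb{T}_\eta$ to conclude $D$ is a non-zero-divisor. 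Your route avoids introducing the auxiliary module $L_1$ and the appeal to the result on closed immersions of equidimensional rigid spaces, so it is marginally more self-contained; the paper's route makes the geometric picture (a whole component of the eigenvariety is cut out) more visible. For the local input at $\l$, the paper cites Ramanujan--Petersson \cite{Blasius} as the primary reference, with your genericity-of-unramified-principal-series argument mentioned as an equivalent alternative.
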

%
%All this is unnecessary. Ditch the classically well-endowed. The map is injective so long as the affinoid contains a classical point: the kernel is torsion free hence faithful, so has support on this classical point, so we got a contradiction.
\begin{proof}
We assume without loss of generality that $X$ is connected. Let $L_0$ be the $\OO(X)$-module $\ker(i^\dagger i)$. The submodule $L_0\subset L^2$ is stable under the action of $\mathbb{T}^{(\n p\delta)}[U_\pi]$, as is the image, denoted $L_1$, of $L_0$ in $L$ under the first projection map from $L^2$. We denote by $\mathscr{H}_L$ the image of $\mathbb{T}^{(\n p\delta)}[U_\pi]$ in $\End_{\OO(X)}(L)$, and denote by $\mathscr{H}_{L_1}$ the image of $\mathbb{T}^{(\n p\delta)}[U_\pi]$ in $\End_{\OO(X)}(L_1)$ (which is naturally a quotient of $\mathscr{H}_L$). 

Suppose that $L_0$ is non-zero. For $(f,g)$ in $L_0$ we have $(\N\l+1)f+S_\l^{-1}T_\l g=T_\l f+(\N\l+1)g=0$. Eliminating $g$ we get $T_\l^2 f-(\N\l+1)^2 S_\l f=0$, so the Hecke operator $T_\l^2-(\N\l+1)^2 S_\l$ maps to $0$ in $\mathscr{H}_{L_1}$. Since $L_1$ is $\OO(X)$-torsionfree, $\mathscr{H}_{L_1}$ (and $(\mathscr{H}_{L_1})^{red}$) is $\OO(X)$-torsionfree. Applying lemma \ref{chentf} we conclude that $(\mathscr{H}_{L_1})^{red}$ is equidimensional of dimension $\dim(\OO(X))$. We now have a closed immersion of reduced rigid spaces $Sp(\mathscr{H}_{L_1}^{red}) \rightarrow Sp(\mathscr{H}_{L}^{red})$, where the source and target are equidimensional of the same dimension. It follows from \cite[Proposition 1.2.3]{CM} (see also \cite[Corollary 2.2.7]{conirr}) that the image of this closed immersion is a union of irreducible components of $Sp(\mathscr{H}_{L}^{red})$. Therefore the zero locus of the Hecke operator $T_\l^2-(\N\l+1)^2 S_\l$ contains an irreducible component of the eigenvariety $\mathscr{E}^D(\n)$, so by Theorem \ref{Zardens} this Hecke operator vanishes on an essentially classical point. Recalling that the definition of essentially classical involves a twist by a character which is  unramified at $\l$, it follows that $T_\l^2-(\N\l+1)^2 S_\l$ vanishes at a classical point, which violates the Hecke eigenvalue bounds given by the Ramanujan-Petersson conjecture for Hilbert modular forms (\cite[Theorem 1]{Blasius}) (alternatively, the local factor at $\l$ of a cuspidal automorphic representation of $\GL_2(\A_F)$ must be generic, and the Hecke eigenvalue condition implies that we are in the non-generic principal series). We obtain a contradiction, so the kernel $L_0$ must be zero.\end{proof}

Note that the injectivity of $i^\dagger i$ implies the injectivity of $i$. The above shows that if $X$ is as in the statement of Proposition \ref{propinj}, we have $\ker(i^\dagger)\cap \mathrm{im}(i)=0$ so the forms in $M$ cannot be both old and new at $\l$. However, if we look at overconvergent automorphic forms with weight a smaller dimensional affinoid in $\mathscr{W}$ (which does not contain a Zariski dense set of classical weights), then $i^\dagger i$ may have a kernel - this corresponds to families of $p$-adic automorphic forms which are both old and new at $\l$.

\subsection{Some modules}\label{somemodules}
In this section we assume that $X$ is an admissible open affinoid in $\mathscr{W}$. We denote the fraction field of $\OO(X)$ by $E$. If $A$ is an $\OO(X)$-module we write $A_E$ for the $E$-vector space $A\otimes_{\OO(X)}E$.

We begin this section by noting that the injectivity of $i^\dagger i$ implies the injectivity of $j^\dagger j$:

Suppose $j^\dagger j (\lambda,\mu)=0$. Then $\langle (f,g),j^\dagger j (\lambda,\mu)\rangle=0$ for all $(f,g)\in L_E$, so (by Proposition \ref{prop:pairinghecke}) $\langle i^\dagger i(f,g),(\lambda,\mu)\rangle =0$ for all $(f,g)\in L_E$. Now since $i^\dagger i:L_E\rightarrow L_E$ is an injective endomorphism of a finite dimensional vector space, it is an isomorphism, so we see that $\lambda=\mu=0$. Hence $j^\dagger j$ (thus a fortiori $j$) is injective.

We now define two chains of modules which will prove useful:
$$\begin{array}{l l}\Lambda_0:=L^2 & \Lambda_0^*:=L^{*2}\\
\Lambda_1:=i^\dagger M & \Lambda_1^*:=j^\dagger M^*\\
\Lambda_2:=i^\dagger(M\cap i(L_E^2)) & \Lambda_2^*:=j^\dagger(M^*\cap j((L^*_E)^2))\\
\Lambda_3:=i^\dagger i L^2 & \Lambda_3^*:=j^\dagger j L^{*2}.\end{array}$$
We note that $\Lambda_0\supset \Lambda_1 \supset \Lambda_2 \supset \Lambda_3$, and that $$\Lambda_2/\Lambda_3=i^\dagger(M\cap i(L_E^2)/iL^2)=i^\dagger((M/iL^2)^{\mathrm{tors}}),$$ with analogous statements for the starred modules.

We fix the usual action of $\mathbb{T}^{(\n\delta p \l)}$ on all these modules. We can now describe some pairings between them which will be equivariant under the $\mathbb{T}^{(\n\delta p \l)}$ action. They will not all be equivariant with respect to the action of $T_\l$.

We have a (perfect) pairing $\langle,\rangle:L_E^2\times (L^*_E)^2\rightarrow E$ which, since $j$ is injective, induces a pairing $$\Lambda_0 \times (M^*\cap j((L^*_E)^2)) \rightarrow E/\OO(X),$$ which in turn induces a pairing $$P_1: \Lambda_0/\Lambda_1 \times (M^*\cap j((L^*_E)^2)/j(L^{*2})) \rightarrow E/\OO(X).$$ The fact that this pairing is perfect follows from \cite[Lemma 6]{chicomp} (the proof of which applies verbatim to the more general setting of this paper).

In exactly the same way, we have a perfect pairing 
$$P_2: (M\cap i(L_E^2))/i(L^2) \times \Lambda_0^*/\Lambda_1^* \rightarrow E/\OO(X).$$

The final pairing we will need is induced by the pairing between $M$ and $M^*$. It is straightforward to check that this gives a perfect pairing: $$P_3: \mathrm{ker}(i^\dagger) \times M^*/(M^* \cap j((L_E^*)^2)) \rightarrow \OO(X).$$ 
\subsection{An analogue of Ihara's lemma}\label{Ihara}

We can generalise the version of Ihara's lemma appearing in \cite{chicomp} to this setting. We want to obtain information about the $\mathbb{T}^{(\n\delta p \l)}$ action on the quotients $\Lambda_2/\Lambda_3\cong i^\dagger(M/iL^2)^{\mathrm{tors}}$, $\Lambda_2^*/\Lambda_3^*\cong j^\dagger(M^*/jL^{*2})^{\mathrm{tors}}$, $\Lambda_0/\Lambda_1$ and $\Lambda_0^*/\Lambda_1^*$. 

\begin{lemma}\label{normtrivial} Let $Y \hookrightarrow X$ be a closed immersion with $Y$ a reduced irreducible affinoid. Let $y\in \OCY$ be non-zero. Suppose $y$ factors through $Nm$, that is $y(g)=y(h)$ for all $g, h \in D_f^\times$ with $Nm(g)=Nm(h)$. Then the map \[\nn:\OO_p^\times\rightarrow\OO(Y)^\times\] associated with $Y$ is a finite order character and there exists a finite \'{e}tale cover $Y'\rightarrow Y$, a finite Abelian extension $F'/F$, a compact open subgroup $U^p \subset \A_{F,f}^{p,\times}$ and a continuous character \[\psi: F^\times\backslash \A_{F}^\times/\overline{U^pF_\infty^{\times,+}} \rightarrow \OO(Y')^\times\] such that if $v \nmid \n p\delta \l$ is a prime of $F$ which splits completely in $F'$ then $\psi(\varpi_v)$ is in $\OO(Y)^\times$ and $T_v y =\psi(\varpi_v)(\mathbf{N}v+1)y$. The field $F'$ (and the cover $Y'$) may be chosen independent of $y$.

%$v:\OO_p^\times\rightarrow \OO(Y)^\times$ is trivial on the image of $\OO_F^\times$. Then for $Y', \tilde{v}$ as above and for any place $v$ of $F$ which splits completely in $F'$ and is coprime to $\n\delta p$, we have $T_v y=\tilde{v}(\varpi_v)(1+\mathbf{N}v)y$.
\end{lemma}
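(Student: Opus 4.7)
My strategy is to first exploit the factoring through the reduced norm to constrain $y$ sharply, then descend to a function on a finite quotient of $\A_{F,f}^\times$, and finally extract $\psi$ as a common Hecke eigenvalue of its character components.

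\textbf{Step 1 (constancy of values and triviality of $\nn$).} At each $p$-adic place the projection $U_p$ of $U$ to $\GL_2(\OO_p)$ contains every upper unipotent matrix $\begin{pmatrix}1 & b\\ 0 & 1\end{pmatrix}$ with $b\in\OO_p$, and these all have reduced norm $1$. Taking $u\in U$ with such $p$-part and identity elsewhere, the transformation law $y(gu)=y(g)\cdot u_p$ combined with $y(gu)=y(g)$ (from the $Nm$-factoring) forces $y(g)(z+b,x)=y(g)(z,x)$ for every $b\in\OO_p$, so $y(g)\in\OO(Y)\subset\mathscr{A}_{Y,r}$. Next, since $\begin{pmatrix}a&0\\0&1\end{pmatrix}$ and $\begin{pmatrix}1&0\\0&a\end{pmatrix}$ placed at the $p$-component (identity elsewhere) have the same reduced norm $a$, their right actions on any non-zero value of $y$ must agree, and the explicit action formula in section~\ref{ocforms} yields $\nn(a)=1$ for every $a\in\OO_p^\times$. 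Thus $\nn$ is trivial, and in particular of finite order.

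\textbf{Step 2 (descent to a function on ideles and Hecke action).} Writing $y=Y\circ Nm$ for $Y\colon\A_{F,f}^\times\to\OO(Y)$, left invariance under $D^\times$ and the Hasse--Schilling--Maass equality $Nm(D^\times)=F^{\times,+}$ (valid since $D$ is totally definite) yield $F^{\times,+}$-invariance of $Y$, while the $U$-invariance combined with Step~1 becomes $Y(a\cdot Nm(u))=\vv(\det u_p)Y(a)$. After rescaling by $\vv$ at the $p$-component, the function $\tilde Y(a):=Y(a)/\vv(a_p)$ descends to the finite abelian group $C:=F^{\times,+}\backslash\A_{F,f}^\times/Nm(U)$. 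For a Hecke operator $T_v=[U\eta_v U]$ with $v\nmid\n p\delta\l$, decomposing into $\mathbf{N}v+1$ right cosets $Ux_i$ whose representatives are supported at $v$ (so with trivial $p$-component) with $Nm(x_i)=\varpi_v$, and observing that $(x_i)_p$ acts trivially on constants, a direct computation gives
\[T_v y(g)=\sum_i y(gx_i^{-1})=(\mathbf{N}v+1)\,Y(Nm(g)\varpi_v^{-1}).\]

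\textbf{Step 3 (extraction of $\psi$, $F'$, $Y'$).} Base change to a finite \'etale cover $Y'\to Y$ adjoining enough roots of unity; over $\OO(Y')$ the function $\tilde Y$ decomposes as a finite sum $\sum c_i\chi_i$ of characters $\chi_i\colon C\to\OO(Y')^\times$. Each $\chi_i$ extends canonically (trivially on $F_\infty^{\times,+}$) to a continuous character of $F^\times\backslash\A_F^\times/\overline{U^p F_\infty^{\times,+}}$ for $U^p\subset\A_{F,f}^{p,\times}$ determined by the image of $U$ under $Nm$ away from $p$. Let $F'/F$ be the finite abelian extension cut out (via class field theory) by the intersection of the kernels of all ratios $\chi/\chi'$ as $\chi,\chi'$ range over the full character group of $C$; this $F'$ depends only on $U$. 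For $v$ split completely in $F'$, all the $\chi_i(\varpi_v)$ coincide, so setting $\psi(\varpi_v)$ equal to the inverse of this common value yields
\[T_v y=\psi(\varpi_v)(\mathbf{N}v+1)y\]
from Step~2. Galois invariance of the common value under $\Gal(Y'/Y)$ then descends $\psi(\varpi_v)$ to $\OO(Y)^\times$.

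\textbf{Main obstacle.} The most delicate point is Step~3, and specifically the uniformity of $F'$ and $Y'$: one must verify that the set of characters that could possibly appear in the decomposition of any such $y$ depends only on the compact open $U$, so that a single choice of $F'/F$ and \'etale cover $Y'\to Y$ simultaneously handles every admissible $y$. The weight constraints of Step~1 and the coset computation of Step~2 are essentially formal once this setup is in place.
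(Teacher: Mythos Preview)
Your Step~1 is correct and in fact cleaner than the paper's argument: comparing the actions of $\begin{pmatrix}a&0\\0&1\end{pmatrix}$ and $\begin{pmatrix}1&0\\0&a\end{pmatrix}$ (both lie in the $p$-component of $U$ and have the same reduced norm) on a value of $y$ already known to be constant in $z$ directly forces $\nn(a)=1$, whereas the paper uses the lower unipotent $\begin{pmatrix}1&0\\\pi^\alpha&1\end{pmatrix}$ and only deduces that $\nn$ has finite order. Your Hecke computation in Step~2 is also fine.

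The genuine gap is the descent claim at the end of Step~2. The rescaled function $\tilde Y(a):=Y(a)/\vv(a_p)$ is indeed right $Nm(U)$-invariant, but it is \emph{not} left $F^{\times,+}$-invariant: for $f\in F^{\times,+}$ one computes $\tilde Y(fa)=\tilde Y(a)/\vv(f_p)$, and there is no reason for $\vv$ (extended to $F_p^\times$ by $\vv(\pi_j)=1$) to kill the $p$-unit parts of arbitrary totally positive elements of $F$. Already for $F=\Q$ with $\vv$ a nontrivial character of $\Z_p^\times$ and $f$ a rational prime $\ne p$, one has $\vv(f)\ne 1$. So $\tilde Y$ does not descend to $C$, and the character decomposition of Step~3 cannot be carried out as written. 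What the setup does give you (by comparing left $F^{\times,+}$-invariance with right $Nm(U)$-equivariance on the overlap $\OO_F^{\times,+}$) is only that $\vv$ is trivial on $\OO_F^{\times,+}$, and this is exactly where the paper's argument intervenes: it appeals to Chevalley's congruence subgroup theorem for $\mathbb{G}_m/F$ to place a congruence subgroup inside $\ker(\vv|_{\OO_F^\times})$, and then extends $\vv^{-1}$ from a neighbourhood of $1$ in $\OO_p^\times$ to a character $\psi$ of an idele class group $F^\times\backslash\A_F^\times/\overline{U^p F_\infty^{\times,+}}$. That extension is precisely the missing ingredient; your division by $\vv(a_p)$ is an attempt to write down such an extension by hand, but the naive extension of $\vv$ to $F_p^\times$ has no compatibility with the global $F^{\times,+}$-structure. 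Once $\psi$ is constructed this way, the field $F'$ is taken to be the class field of the corresponding narrow ray class group, which is visibly independent of $y$, and the Hecke eigenvalue relation follows by writing $\varpi_v$ as $Nm(d u_0)$ for $d\in D^\times$ and $u_0$ of small level.
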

\begin{proof}
Suppose $y$ is as in the statement of the lemma. First we show that $\nn$ is a finite order character. Suppose we have $u \in U$ such that $u_p \in \SL_2(F_p)$ and $u_v$ is the identity away from $p$.  Then, since $Nm(u)=1$, $y(g)=y(gu)=y(g)\cdot u_p$ for all $g \in D^\times_f$. If we denote by $u_a$ the element of $U$ with $p$ factor equal to $\begin{pmatrix}
1 & a\\ 0 & 1
\end{pmatrix}$ for $a \in \OO_p$, and other factors equal to the identity, then we see that $y(g)(z,x)=y(g)\cdot u_{a,p}(z,x)=y(g)(z+a,x)$ for all $a \in \OO_p$, $z \in \mathbb{B}_{r}$, $x \in Y$, so $y(g)(z,x)$ is constant in $z$ (for fixed $x$), since non-constant rigid analytic functions have discrete zero sets. 

Recall that $U=\UU$, so if we denote be $u_0$ the element of $U$ with $p$ factor equal to $\begin{pmatrix}
1 & 0\\ \pi^\alpha & 1
\end{pmatrix}$ and other factors the identity, then again we have $y(g)=y(g)\cdot u_{0,p}$. This implies that $y(g)\cdot u_{0,p}(z,x)$ is, for fixed $x\in Y$, a constant function of $z$. Let $n$ be a positive integer such that $|\pi_j^{n+\alpha_j}|\le r_j|\pi_j^{\alpha_j}|$ for all $j \in J$. We have $$(y(g)\cdot u_{0,p})(z,x) = \nn(\pi^\alpha z + 1,x)y(g)(z,x),$$ which implies that the character $\nn:\OO_p^\times \rightarrow \kappa(x)^\times$ is trivial on the subgroup $1+\pi_j^{\alpha_j + n} \OO_j$ of each factor $\OO_j^\times$ (we first deduce this for the characters $\nn_x$ with $x$ such that $y(g)(z,x)$ is non-zero, then extend to all $x\in Y$ by analytic continuation, since $Y$ is irreducible).  Hence $\nn$ is a finite order character. This implies that $\vv$ is trivial on a finite index subgroup, $\Gamma$, of $\OO_F^\times$.

Also, for each $g \in D^\times_f$ we have $y(g)\cdot \gamma = \vv(\det \gamma)y(g)$ for all $\gamma$ in the projection of $U_1(\n\pi^{\alpha+n})$ to $\mathbb{M}_{\alpha + n}$, since these matrices all have bottom right hand entry congruent to $1 \mod \pi^{\alpha+n}$. 

Since $\mathbb{G}_m/F$ satisfies the congruence subgroup property (\cite[Th\'{e}or\`{e}me 1]{Chevalley}), we have $\OO_F^\times\cap U^p(1+\pi^r\OO_p)F_\infty^{\times,+} \subset\Gamma$ for some compact open subgroup $U^p \subset \A_{F,f}^{p,\times}$ and $r \in \Z_{\ge 1}$. Therefore, setting $\Gamma' = \OO_F^\times\cap U^p(1+\pi^r\OO_p)F_\infty^{\times,+}$, we have an injective map \[\overline{\Gamma}'\backslash(1+\pi^r\OO_p) \hookrightarrow F^\times\backslash\A_F^\times/\overline{U^pF_\infty^{\times,+}}\] with finite cokernel. Therefore we have $Y'$ and $\psi$ as in the statement of the lemma, with $\psi|_{(1+\pi^r\OO_p)}=\vv|_{(1+\pi^r\OO_p)}^{-1}$.

We let $F'$ be the class field for the (narrow) ray class group %\[F^\times\backslash\A_F^\times/[Nm(U_1(\n\pi^{\alpha+n}))\cap U^p(1+\pi^r\OO_p)]F_\infty^{\times,+}.\]
\[F^\times\backslash\A_F^\times/U^p(1+\pi^r\OO_p)F_\infty^{\times,+},\]
so if $v$ splits completely in $F'$ we do indeed have $\psi(\varpi_v) = \vv(x) \in \OO(Y)^\times$ for $x \in 1+\pi^r\OO_p$ with $F^\times \varpi_v U^pF_\infty^{\times,+} = F^\times x U^pF_\infty^{\times,+}$ (such an $x$ exists because $\varpi_v$ has trivial image in the narrow ray class group).

%Moreover, by considering $y(gd)=y(dg)=y(g)$ for $d \in D^\times$ with $Nm(d) = $ la la la - instead just make sure we split in an extension so that $v$ is trivial on the appropriate modulus intersected with $\OO_F$.

Suppose a finite place $v$ of $F$ splits completely in $F'$ and is coprime to $\n\delta p\l$. Since $Nm(U_1(\n\pi^{\alpha+n})) =\OO_D\otimes_\Z\widehat{\Z}$, we may choose $u_0$ in $U_1(\n\pi^{\alpha+n})$ such that we have an equality in $\A_F^\times$ \[\varpi_v = a Nm(u_0)\alpha\] for some $a \in F^\times$, $\alpha \in F_\infty^{\times,+}$. Therefore we have an equality $\varpi_v = aNm(u_0)$ in $\A_{F,f}^\times$, with $a \in F^{\times,+}$. The image of the reduced norm map on $D^\times$ is precisely $F^{\times,+}$, by the Hasse--Schilling--Maass theorem \cite[33.15]{Reiner}, so we have $d \in D^\times$ such that $Nm(\eta_v)=\varpi_v = Nm(d u_0)$.

Now we can compute
\begin{align*}T_v(y)(g)&=\sum_{u\in(\eta_v^{-1} U\eta_v)\cap U\backslash U}y(gu^{-1} \eta_v^{-1})\cdot u_p\\
&=\sum_{u\in(\eta_v^{-1} U\eta_v)\cap U\backslash U}y(g\eta_v^{-1}u^{-1})\cdot u_p\\
&=\sum_{u\in(\eta_v^{-1} U\eta_v)\cap U\backslash U}y(g\eta_v^{-1})=(\mathbf{N}v+1)y(g\eta_v^{-1})\\
&=(\mathbf{N}v+1)y(gu_0^{-1})=\vv(\det u_{0,p}^{-1})(\mathbf{N}v+1)y(g).\end{align*}

By the definition of $\psi$, we have \[\vv(\det u_{0,p}^{-1}) = \psi(\det u_{0,p}) = \psi(Nm(d u_0)) = \psi(\varpi_v),\] so we are done.
\end{proof}
\begin{lemma}\label{normtriviald} Let $Y \hookrightarrow X$ be a closed immersion with $Y$ a reduced irreducible affinoid. Let $y\in \OCdY$. If $y$ factors through $Nm$, then $y=0$.
\end{lemma}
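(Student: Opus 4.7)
The plan is to mirror the proof of Lemma \ref{normtrivial} using the unipotent elements $u_a\in U$ (with $p$-component $\begin{pmatrix}1&a\\0&1\end{pmatrix}$ and trivial at the other places, for $a\in\OO_p$), but to exploit the fact that $y(g)\in\mathscr{D}_{Y,r}$ is now a continuous linear functional on $\mathscr{A}_{Y,r}$: the same $u_a$-invariance will force $y(g)$ to vanish outright, rather than impose a structural constraint.

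Since $Nm(u_a)=1$, the $Nm$-factorisation hypothesis combined with the $U$-equivariance of $y$ gives $y(g)\cdot u_{a,p}=y(g)$ in $\mathscr{D}_{Y,r}$ for every $g\in D_f^\times$ and every $a\in\OO_p$. Unwinding the definition of the dual $\mathbb{M}_\alpha^{-1}$-action (writing $u_{a,p} = u_{-a,p}^{-1}$ with $u_{-a,p}\in\mathbb{M}_\alpha$), this reads $y(g)(f) = y(g)\bigl(f(z-a,x)\bigr)$ for every $f\in\mathscr{A}_{Y,r}$. Next, choose $N$ large enough that $|\pi_j^N|<r_j$ for every $j\in J$, and decompose $\mathscr{A}_{Y,r}\cong\prod_{s\in S}\mathscr{A}_{Y,\mathbf{1}}$ via $\nu_r^*$ as in Section \ref{explicit}; denote by $\lambda_s\in\mathscr{A}_{Y,\mathbf{1}}^*$ the corresponding components of $y(g)$. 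By Lemma \ref{simplecalc}, specialising the invariance to $a=-\pi^N$ translates to the statement that each $\lambda_s$ is fixed by the translation $z\mapsto z+\epsilon$ on $\mathscr{A}_{Y,\mathbf{1}}$, where $\epsilon_i:=\pi_{j(i)}^N/x_{j(i)}\in K^\times$.

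It therefore suffices to show that any continuous $\OO(Y)$-linear functional $\lambda$ on $\OO(Y)\langle z_i:i\in I\rangle$ fixed by the translation $z\mapsto z+\epsilon$ (with all $\epsilon_i\in K^\times$) vanishes. Setting $b_{\underline{n}}:=\lambda(z^{\underline{n}})\in\OO(Y)$ and expanding the invariance $\lambda(z^{\underline{n}})=\lambda\bigl((z+\epsilon)^{\underline{n}}\bigr)$ via the binomial theorem yields
\[\sum_{\underline{k}\lneq\underline{n}}\Bigl(\prod_{i\in I}\binom{n_i}{k_i}\epsilon_i^{n_i-k_i}\Bigr)b_{\underline{k}}=0\quad\text{for every }\underline{n}\in\Z_{\ge 0}^I.\]
Since $Y$ is reduced and irreducible, $\OO(Y)$ is a domain, so each coefficient (a non-zero element of $K$) is a non-zero-divisor. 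A double induction --- first on the total degree $|\underline{n}|$, and within each degree on $\underline{n}$ in colexicographic order, using the relation obtained at $\underline{n}+e_1$ (whose only degree-$|\underline{n}|$ contributions are $b_{\underline{n}}$ itself and strictly colex-smaller terms) to isolate $b_{\underline{n}}$ against terms already known to vanish --- forces every $b_{\underline{n}}=0$. Hence each $\lambda_s=0$, so $y(g)=0$ for every $g\in D_f^\times$, and $y=0$. The only real subtlety is the multi-index bookkeeping in the final induction; conceptually, the lemma reflects that while the upper-unipotent-fixed subspace of $\mathscr{A}_{Y,r}$ is large (which is what drove the conclusion of Lemma \ref{normtrivial}), the corresponding coinvariants on each factor $\mathscr{A}_{Y,\mathbf{1}}$ vanish, so no non-zero invariant dual vector can exist.
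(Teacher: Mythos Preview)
Your proof is correct and follows essentially the same approach as the paper's. Both arguments reduce to showing that a functional on $\mathscr{A}_{Y,\mathbf{1}}$ fixed under translation by $\epsilon$ vanishes, via an explicit recursion on the values $b_{\underline{n}}=\lambda(z^{\underline{n}})$ obtained from the relation at $\underline{n}+e_1$. The only cosmetic difference is in the induction bookkeeping: the paper inducts directly on the colexicographic order (observing that every monomial in $(z+\epsilon)^{\underline{n}+e_1}$ other than $z^{\underline{n}+e_1}$ and $z^{\underline{n}}$ is colex-smaller than $\underline{n}$), whereas you layer total degree on top of colex; both orderings do the job. Your remark that $\OO(Y)$ is a domain is harmless but unnecessary, since the relevant coefficient $(n_1+1)\epsilon_1$ already lies in $K^\times$ and is therefore a unit in $\OO(Y)$.
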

\begin{proof}
This is similar to the proof of \cite[Lemma 7 (ii)]{chicomp}, but we rely on an explicit computation with $\mathscr{A}_{Y,r}$ which becomes more complicated when the dimension of $\mathbb{B}_r$ (which is equal to $[F:\Q]$) grows.

Without loss of generality (by extending $K$ if necessary) we can assume that for every $j \in J$, $r_j = |x_j|$ for some $x_j \in K$. With this assumption, we define an isomorphism $$\tilde{\nu}_r^*:\mathscr{A}_{Y,r}\cong\prod_{s \in S} \OO(Y)\langle T_{s,1},...,T_{s,g} \rangle$$ as in Section \ref{explicit}, where $S \subset \OO_p$ is a finite set defined as in that section. Write monomials in $\OO(Y)\langle T_{s,1},...,T_{s,g} \rangle$ as $T_s^{\mathbf{j}}$, where $\mathbf{j}=(j_1,...,j_g)\in (\Z_{\ge 0})^I$ and $T_s^{\mathbf{j}}=\prod_{i\in I}T_{s,i}^{j_i}.$

We denote the inverse of $\tilde{\nu}_r^*$ by $\theta$. By duality, this induces an isomorphism $\theta^*$ between $\mathscr{D}_{Y,r}$ and $\prod_{s \in S} \OO(Y)\langle[\tau_{s,1},...,\tau_{s,g}]\rangle$, where $\OO(Y)\langle[\tau_{s,1},...,\tau_{s,g}]\rangle$ denotes a ring of power series with bounded coefficients in $\OO(Y)$ (as always, we use the supremum norm on the reduced affinoid algebra $\OO(Y)$). For monomials in $\OO(Y)\langle[\tau_{s,1},...,\tau_{s,g}]\rangle$ we write $\tau_s^{\mathbf{l}}=\prod_{k\in I}\tau_{s,k}^{l_i}$ for $\mathbf{l}\in (\Z_{\ge 0})^I$ and the pairing between $\mathscr{A}_{Y,r}$ and $\mathscr{D}_{Y,r}$ is induced component-wise by $\langle  T_s^{\mathbf{j}}, \tau_s^{\mathbf{l}}\rangle = \delta_{\mathbf{j},\mathbf{l}}$. Here $\delta_{\mathbf{j},\mathbf{l}}$ denotes the function which is $1$ if $\mathbf{j}=\mathbf{l}$ and $0$ otherwise. 

Now we can compute the action of $\gamma=\begin{pmatrix}
1 & -\pi^N\\0 & 1
\end{pmatrix} \in \mathbb{M}_\alpha$ on $\mathscr{D}_{Y,r}$, where $N$ is an integer chosen to be sufficiently large that $|\pi_j|^N < r_j$ for all $j$. We use lemma \ref{simplecalc}. Let $f$ be an element of $\mathscr{D}_{Y,r}$, with $\theta^*(f) = (F_s)_{s \in S}$ where $F_s = \sum b^\mathbf{l}_{s} \tau_s^\mathbf{l}.$ We have \begin{align*}\langle \theta(T^\mathbf{j}_{s}),f\cdot\begin{pmatrix}
1 &  -\pi^N\\0 & 1
\end{pmatrix}\rangle &= \langle \theta(T^\mathbf{j}_{s})\cdot\begin{pmatrix}
1 & \pi^N\\0 & 1\end{pmatrix}, f\rangle =\langle \tilde{\nu}_r^*(\theta(T^\mathbf{j}_{s}))\cdot\begin{pmatrix}
1 & \pi^N\\0 & 1\end{pmatrix}), (F_s)_{s \in S} \rangle.\end{align*}
Now by lemma \ref{simplecalc} we have $$\tilde{\nu}_r^*(\theta(T^\mathbf{j}_{s}))\cdot\begin{pmatrix}
1 & \pi^N\\0 & 1\end{pmatrix}) = \prod_{i \in I}(T_{s,i}+\pi_i^N/x_i)^{j_i}.$$ 
\begin{sublemma}
Suppose $$f\cdot\begin{pmatrix}
1 &  -\pi^N\\0 & 1
\end{pmatrix}=f.$$ Then $f=0$.
\end{sublemma}
\begin{proof}
We show that for every monomial $T^\mathbf{j}_{s}$ we have $$b^{\mathbf{j}}_s=\langle \theta(T^\mathbf{j}_{s}),f\rangle= \langle \theta(T^\mathbf{j}_{s}),f\cdot\begin{pmatrix}
1 &  -\pi^N\\0 & 1
\end{pmatrix}\rangle=0.$$ 
We induct on $(\Z_{\ge 0})^I$ using colexicographic ordering (which is a well-ordering, with minimal element $(0,...,0)$). Note that we fix an ordering of $I$ (e.g. as we have done previously we label the elements of $I$ as $1,...,g$). We recall the definition of the colexicographic ordering.
\begin{definition}
Given $\mathbf{j}=(j_1,...,j_g),\mathbf{j}'=(j'_1,...,j'_g) \in (\Z_{\ge 0})^I$ we say that $$\mathbf{j}'<^{\mathrm{colex}}\mathbf{j}$$ if there is an $m\in I$ such that for all $i > m$ we have $j_i=j'_i$ and we have $j_m>j'_m$. 
\end{definition}
We now proceed with our inductive proof. 

Suppose we have $\mathbf{j}=(j_1,...,j_g)\in (\Z_{\ge 0)})^I$, and for all $\mathbf{j}' \in (\Z_{\ge 0)})^I$ with $\mathbf{j}'<^{\mathrm{colex}}\mathbf{j}$ we have $b^{\mathbf{j}'}_s=0$. Set $\mathbf{j}^+=(j_1+1,j_2,...,j_g)$. Then we have \begin{align*} b^{\mathbf{j}^+}_s &= \langle\theta(T^{\mathbf{j}^+}_s),f\rangle \\&= \langle\theta(T^{\mathbf{j}^+}_s),f\cdot\begin{pmatrix}
1 &  -\pi^N\\0 & 1
\end{pmatrix}\rangle = \langle (T_{y,1}+\pi_1^N/x_1)^{j_1+1}\prod_{i =2}^{g}(T_{y,i}+\pi_i^N/x_i)^{j_i}, F_s \rangle.\end{align*}
Now this final pairing is just equal to 
$$\langle T_s^{\mathbf{j}^+} + (j_1+1)(\pi_1^N/x_1)T_s^{\mathbf{j}},F_s\rangle$$ since all other monomials in the left hand side of the pairing have index $<^{\mathrm{colex}} \mathbf{j}$, hence by the inductive hypothesis pair to $0$ with $F_s$. So we obtain $b^{\mathbf{j}^+}_s = b^{\mathbf{j}^+}_s + (j_1+1)(\pi_1^N/x_1)b^{\mathbf{j}}_s$, so $b^{\mathbf{j}}_s = 0$ as required. Note that this simultaneously handles the `base case' $\mathbf{j}=(0,...,0)$ and the `inductive step'.
\end{proof}

Now we return to the statement in the lemma and suppose $y\in \OCdY$ factors through $Nm$. Let $u_1$ be the element of $U \subset D_f^\times$ with $p$ component equal to $\gamma = \begin{pmatrix}
1 &  -\pi^N\\0 & 1
\end{pmatrix}$ and all other components the identity. For all $g \in D_f^\times$, $y(gu_1)=y(g)\gamma$ since $u_1 \in U$ and $y(gu_1)=y(g)$ since $Nm(u_1)=1$. Hence $y(g)=y(g)\gamma$ and the above calculation shows that $y(g)=0$ for all $g$.
\end{proof}
\begin{remark}
Note that there is a rather odd asymmetry in the preceding lemma. We attempt to give some explanation of this. Denote by $\mathscr{A}_{Y,r}^\mathrm{triv} \subset \mathscr{A}_{Y,r}$ the subspace of functions which are constant in the variable $z \in \mathbb{B}_r$. This subspace is preserved under the action of $\mathbb{M}_\alpha$, for weights $\kappa$ as specified in part (1) of the above lemma. Dual to the embedding $\mathscr{A}_{Y,r}^\mathrm{triv} \rightarrow \mathscr{A}_{Y,r}$ there is a $\mathbb{M}_\alpha^{-1}$-equivariant projection $\mathscr{D}_{Y,r} \rightarrow \mathscr{D}_{Y,r}^\mathrm{triv}$, where $\mathscr{D}_{Y,r}^\mathrm{triv} = (\mathscr{A}_{Y,r}^\mathrm{triv})^*$. The point of the above calculation is basically to show that this projection does not admit an $\mathbb{M}_\alpha^{-1}$-equivariant section $\mathscr{D}_{Y,r}^\mathrm{triv} \rightarrow \mathscr{D}_{Y,r}$. This fact presumably has a representation--theoretic interpretation.
\end{remark}

\subsection{Very Eisenstein modules}
We will soon apply the preceding lemma to give a form of Ihara's lemma for modules of overconvergent automorphic forms and dual overconvergent forms. First we need a technical definition to take care of the exceptional case in lemma \ref{normtrivial}.

\begin{definition}\label{eismodule}
Denote by $\mathscr{H}$ the image of $\mathbb{T}^{(\delta p\n\l)}$ in $\End(M)$. Suppose $N$ is an $\mathscr{H}$-module which is finitely generated as an $\OO(X)$-module. We say that $N$ is \emph{very Eisenstein} if every prime ideal $\mathfrak{P}$ of $\cH$ in the support of $N$, with $\p = \mathfrak{P}\cap\OO(X)$, satisfies:
\begin{itemize}
\item  the induced character \[\nn: \OO_p^\times \rightarrow (\OO(X)/\p)^\times\] has finite order
\item there exists a finite \'{e}tale $K$-algebra extension $\OO(X)/\p \hookrightarrow A$, a finite Abelian extension $F'/F$ and a continuous character $\psi: F^\times\backslash \A_{F}^\times/U_1(\n)^pF_\infty^{\times,+} \rightarrow A^\times$ such that if $v \nmid \n p\delta \l$ is a prime of $F$ which splits completely in $F'$ then $\psi(\varpi_v)$ is in $(\OO(X)/\p)^\times$ and $T_v-\psi(\varpi_v)(\mathbf{N}v+1) = 0$ in $\cH/\mathfrak{P}$. 
\end{itemize}  
\end{definition}
\begin{remark}
Since minimal elements of $\mathrm{Supp}_{\cH}(N)$ are the same as minimal elements of $\mathrm{Ass}_{\cH}(N)$ (\cite[Theorem 6.5]{Matsumura}), the very Eisenstein condition can alternatively be checked on associated primes.
\end{remark}
\begin{lemma}\label{sesok}
Suppose we have a sequence of $\cH$-modules \[A \rightarrow B \rightarrow C\] which is exact in the middle, and that $A$ and $C$ are very Eisenstein. Then $B$ is very Eisenstein.

Suppose $N$ is a very Eisenstein $\cH$-module. Then $\Hom_{\OO(X)}(N,E/\OO(X))$ is also very Eisenstein.
\end{lemma}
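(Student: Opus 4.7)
The plan is to reduce both statements to the corresponding containments of $\cH$-supports, and then invoke the very Eisenstein conditions prime by prime.

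For the first claim, I would factor the sequence as a short exact sequence
\[0 \to \operatorname{im}(A\to B)\to B \to \operatorname{im}(B\to C)\to 0\]
of $\cH$-modules. The outer terms are a quotient of the finitely generated $\OO(X)$-module $A$ and a submodule of the finitely generated $\OO(X)$-module $C$ respectively, so they are finitely generated (since $\OO(X)$ is Noetherian), and hence so is $B$. The additivity of support on short exact sequences gives
\[\mathrm{Supp}_\cH(B) \subseteq \mathrm{Supp}_\cH(A)\cup\mathrm{Supp}_\cH(C),\]
and every prime in the right hand side is very Eisenstein by hypothesis, so every prime in $\mathrm{Supp}_\cH(B)$ is also very Eisenstein.

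For the second claim, set $N^\vee := \Hom_{\OO(X)}(N, E/\OO(X))$ with the natural $\cH$-action $(T\cdot\phi)(n):=\phi(T\cdot n)$. The key preliminary step is that any very Eisenstein module is $\OO(X)$-torsion. If not, some minimal prime $\p$ of $\OO(X)$ would lie in the image of $\mathrm{Supp}_\cH(N)\to\Spec \OO(X)$; picking $\mathfrak{P}\in\mathrm{Supp}_\cH(N)$ over $\p$, the very Eisenstein hypothesis produces a character $\psi$ with $T_v \equiv \psi(\varpi_v)(\mathbf{N}v+1)$ in $\cH/\mathfrak{P}$ for every $v$ split in some finite abelian $F'/F$. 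This identity would then hold generically along the irreducible component $\Spec(\OO(X)/\p)$, so by Zariski density of essentially classical points (Theorem~\ref{Zardens}) an essentially classical eigensystem would satisfy it; an unramified-at-$v$ twist would give a classical Hilbert modular form with Hecke eigenvalues of size $\mathbf{N}v+1$ at infinitely many $v$, contradicting Ramanujan--Petersson \cite{Blasius} exactly as in the proof of Proposition~\ref{propinj}. Consequently $N$ is killed by some non-zero-divisor $a\in\OO(X)$, and the isomorphism $a^{-1}\OO(X)/\OO(X)\cong\OO(X)/a\OO(X)$ identifies $N^\vee$ with $\Hom_{\OO(X)}(N,\OO(X)/a\OO(X))$, which is finitely generated over $\OO(X)$.

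The pre-composition action now shows $\mathrm{Ann}_\cH(N)\subseteq\mathrm{Ann}_\cH(N^\vee)$, so $\mathrm{Supp}_\cH(N^\vee)\subseteq\mathrm{Supp}_\cH(N)$ and every prime of the latter is very Eisenstein by hypothesis, proving that $N^\vee$ is very Eisenstein. I expect the main obstacle to be the torsion claim: the rest is formal manipulation of supports and annihilators, but the torsion assertion requires adapting the Ramanujan--Petersson / density argument of Proposition~\ref{propinj} to the present setting of a varying character $\psi$ and an auxiliary splitting field $F'$.
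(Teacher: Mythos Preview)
Your argument is correct and its core is the same as the paper's: both parts reduce to the containment $\mathrm{Supp}_\cH(\,\cdot\,)\subset\mathrm{Supp}_\cH(A)\cup\mathrm{Supp}_\cH(C)$ in the first case and to $\mathrm{ann}_\cH(N)\subset\mathrm{ann}_\cH(N^\vee)$ in the second. The paper's proof is in fact just these two sentences.

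Where you go beyond the paper is in explicitly checking that $N^\vee=\Hom_{\OO(X)}(N,E/\OO(X))$ is finitely generated over $\OO(X)$, which the definition of ``very Eisenstein'' formally requires. The paper's proof does not address this; in the applications (Corollary~\ref{cor:trivtors}) the relevant $N^\vee$ is independently identified with a quotient of $L^{*2}$, so finite generation is known a priori. Your torsion argument is a genuine and correct addition, but it can be streamlined considerably: once you have $\mathfrak{P}\in\mathrm{Supp}_\cH(N)$ with $\mathfrak{P}\cap\OO(X)=(0)$ (assuming $X$ irreducible), the first bullet of Definition~\ref{eismodule} already gives a contradiction, since it would force the universal character $\nn:\OO_p^\times\to\OO(X)^\times$ to have finite order, which is impossible for $X$ an admissible open in $\mathscr{W}$. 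You therefore do not need to invoke Ramanujan--Petersson or the density of essentially classical points at all for this step.
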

\begin{proof}
Since the support of $B$ is contained in the union of the support of $A$ and the support of $C$, the first part is obvious.

For the second part, since \[\mathrm{ann}_{\cH}(N) \subset \mathrm{ann}_{\cH}(\Hom_{\OO(X)}(N,E/\OO(X)))\] the support of $\Hom_{\OO(X)}(N,E/\OO(X))$ is contained in the support of $N$, so again the result is obvious.
\end{proof}

\begin{lemma}\label{preihara}
Let $Y \hookrightarrow X$ be a closed, reduced and irreducible sub-affinoid. Then the module $\mathrm{Tor}_1^{\OO(X)}(M/iL^2,\OO(Y))$ is very Eisenstein.
\end{lemma}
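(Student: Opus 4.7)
The plan is to first reduce the Tor module to a concrete kernel. Both $L$ and $M$ are projective finitely generated $\OO(X)$-modules (as slope-$Q$ pieces of Banach modules with property $(Pr)$), hence flat over $\OO(X)$. Combined with the injectivity of $i : L^2 \to M$ from Proposition~\ref{propinj}, the long exact Tor sequence coming from $0 \to L^2 \xrightarrow{i} M \to M/iL^2 \to 0$ collapses to a canonical identification of $\cH$-modules
\[\mathrm{Tor}_1^{\OO(X)}(M/iL^2,\OO(Y)) \;\cong\; \ker\!\bigl(i_Y : L_Y^2 \to M_Y\bigr),\]
where $L_Y = L \otimes_{\OO(X)} \OO(Y)$ and similarly for $M_Y$. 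By the remark after Definition~\ref{eismodule}, checking very-Eisenstein-ness reduces to verifying the two very-Eisenstein conditions at each associated prime $\PP$ of $\ker(i_Y)$.

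Next I would unpack the shape of an element $(f,g) \in \ker(i_Y)$. Writing $i_Y(f,g)$ out as a function on $D_f^\times$ gives $f(y) + g(y\eta_\l^{-1}) = 0$, i.e.\ $g(y) = -f(y\eta_\l)$ for all $y$. Since $g$ is required to be $U$-invariant and $f\cdot\eta_\l$ is only automatically $V$-invariant, promotion to $U$-invariance amounts to extra relations indexed by coset representatives of $V\backslash U$, which are parametrised at the $\l$-component by $\mathbb{P}^1(\F_\l)$. A direct computation using the Weyl element $w_\l=\bigl(\begin{smallmatrix}0&1\\1&0\end{smallmatrix}\bigr)$ shows that $f$ must be invariant under right multiplication by the element $\tau \in D_f^\times$ whose $\l$-component is $\bigl(\begin{smallmatrix}0&\pi_\l\\ \pi_\l^{-1}&0\end{smallmatrix}\bigr)$ and which is trivial elsewhere. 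The subgroup of $\GL_2(F_\l)$ generated by $\GL_2(\OO_\l)$ and $\tau$ is $\{h\in\GL_2(F_\l) : \det h \in \OO_\l^\times\}$, and it contains $\SL_2(F_\l) = \ker(Nm)_\l$; hence $f$ factors through $\det$ at the $\l$-component.

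Now let $\PP$ be an associated prime of $\ker(i_Y)$, put $\p = \PP\cap \OO(Y)$, and take a finite \'{e}tale cover $Y' \to \Spec(\OO(Y)/\p)$ large enough to realise the residue field of $\PP$. Picking a nonzero Hecke eigenelement $(\bar f, \bar g) \in \ker(i_Y)\otimes_{\OO(Y)}\OO(Y')$ killed by $\PP$, I would use the local $\l$-invariance of $\bar f$ together with Hecke-equivariance to construct a nonzero $\tilde f \in \mathbf{S}^D_{Y'}(U;r)$ that factors through $Nm$ and has the same $\mathbb{T}^{(\n p \delta \l)}$-eigensystem as $\bar f$. Applying Lemma~\ref{normtrivial} to $\tilde f$ produces the finite-order weight character $\nn \bmod \p$ and yields a finite abelian extension $F'/F$ together with a continuous ray class character $\psi$ satisfying $T_v \equiv (\N v + 1)\psi(\varpi_v) \pmod \PP$ for every prime $v \nmid \n p \delta\l$ that splits completely in $F'$. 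This is exactly the very-Eisenstein condition at $\PP$.

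The main obstacle is the construction in the third step of the global $Nm$-factoring witness $\tilde f$ from the local $\l$-invariance of $\bar f$: at unramified split places $v\ne\l$, $f$ is only $\SL_2(\OO_v)$-invariant rather than $\SL_2(F_v)$-invariant, so $f$ itself does not factor through $Nm$, and a Hecke-equivariant averaging (together with the finite \'{e}tale base change $Y'\to\Spec(\OO(Y)/\p)$ used to trivialise $\psi$) is required to produce $\tilde f$. This is the substantive combinatorial content of the argument, parallel to the one in \cite{chicomp} but adapted to a higher-dimensional weight space and a totally real base $F$ which may have non-trivial Leopoldt defect.
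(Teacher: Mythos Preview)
Your first two steps are essentially the paper's approach: identify $\mathrm{Tor}_1$ with $\ker(i_Y)$ via flatness of $L$ and $M$, then analyse an element $(f,g)$ of this kernel to deduce extra invariance for $f$ at the place $\l$. The paper phrases this as invariance under the group generated by $U$ and $\eta_\l U\eta_\l^{-1}$, whose $\l$-component contains $\SL_2(F_\l)$; your formulation via the element $\tau$ leads to the same conclusion.

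The gap is in your third step. Your claim that ``$f$ itself does not factor through $Nm$'' is incorrect, and the proposed ``Hecke-equivariant averaging'' is both unnecessary and too vague to constitute a proof. The passage from local $\SL_2(F_\l)$-invariance to global factoring through $Nm$ is accomplished by \emph{strong approximation} for the norm-one group $D^1$: since $D$ is split at $\l$, $D^1(F)\cdot \SL_2(F_\l)$ is dense in $D^1(\A_{F,f})$. As $f$ is already left $D^\times$-invariant and right invariant under an open subgroup whose $\l$-component contains $\SL_2(F_\l)$, it follows that $f$ is invariant under all of $D^1(\A_{F,f})$ acting on the right, hence factors through $Nm$. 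This is exactly the content of \cite[Proposition 8]{chicomp} cited in the paper's proof, and it works uniformly for \emph{every} element of $\ker(i_Y)$, not just Hecke eigenelements.

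Once every element of $\ker(i_Y)$ factors through $Nm$, Lemma~\ref{normtrivial} applies directly (with $F'$ and $Y'$ chosen independently of the element) to show the module is very Eisenstein; no passage to eigenelements or base change to $Y'$ is needed in the argument. Your detour through associated primes and eigenvectors over an \'etale cover is therefore superfluous, and the ``substantive combinatorial content'' you anticipate does not exist here---it is replaced by the one-line strong approximation argument. (The genuinely new combinatorial content in this paper is in Lemma~\ref{normtriviald}, the dual case, not in this lemma.)
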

\begin{proof}

We have a short exact sequence
$$\xymatrix{
0\ar[r]&L^2\ar[r]^i&M\ar[r]&M/iL^2\ar[r]&0
}.$$
Noting that $L^2$ and $M$ are projective $\OO(X)$-modules, hence flat, and taking derived functors of $-\otimes_{\OO(X)}\OO(Y)$ gives an exact sequence
$$\xymatrix{
0\ar[r]&\mathrm{Tor}^{\OO(X)}_1(M/iL^2,\OO(Y))\ar[r]^\delta&L^2\otimes_{\OO(X)}\OO(Y)\ar@/^2pc/[d]^i\\
0&M/iL^2\otimes_{\OO(X)}\OO(Y)\ar[l]&M\otimes_{\OO(X)}\OO(Y)\ar[l]
}.$$

To prove the proposition, it suffices to prove that kernel of the map \[i:L^2\otimes_{\OO(X)}\OO(Y)\rightarrow M\otimes_{\OO(X)}\OO(Y)\] is very Eisenstein. We can identify $L\otimes_{\OO(X)}\OO(Y)$ with $\OCY^{Q_Y}$ and $M$ with $\OCVY^{Q_Y}$, so it in fact suffices to prove that the kernel of the natural `level raising' map $i_Y: \OCY^{\oplus 2} \rightarrow \OCVY$ is very Eisenstein.

Suppose $i_Y(y_1,y_2)=0$. Then $y_1=-y_2|\eta_\l$, so we have $y_2 \in \OCY$, $y_2|\eta_\l \in \OCY$. Therefore $y_2$ and $y_2|\eta_\l$ are both invariant under the action of the group $U$, so $y_2$ is invariant under the action of the group generated by $U$ and $\eta_\l U \eta_\l^{-1}$ in $D_f^\times$. It follows as in the proof of \cite[Proposition 8]{chicomp} that $y_1$ and $y_2$ factor through $Nm$ and now lemma \ref{normtrivial} shows that $\mathrm{Tor}_1^{\OO(X)}(M/iL^2,\OO(Y))$ is very Eisenstein.
\end{proof}
\begin{lemma}\label{preiharadual}Let $Y \hookrightarrow X$ be a closed, reduced and irreducible sub-affinoid. The module $\mathrm{Tor}_1^{\OO(X)}(M^*/jL^{*2},\OO(Y))$ is $0$ for all prime ideals $\p$ of $\OO(X)$
\end{lemma}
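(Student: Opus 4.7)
The plan is to imitate the proof of Lemma \ref{preihara} step by step, with one clean simplification at the end. Since $j:L^{*2}\to M^*$ is injective --- as observed at the beginning of section \ref{somemodules} via the pairing argument reducing injectivity of $j^\dagger j$ to that of $i^\dagger i$ --- we obtain a short exact sequence of $\OO(X)$-modules
\[
0 \longrightarrow L^{*2} \xrightarrow{\;j\;} M^* \longrightarrow M^*/jL^{*2} \longrightarrow 0.
\]
Both $L^*$ and $M^*$ are identified, via the pairing (\ref{pair}) and its analogue at level $V$, with the $\OO(X)$-linear duals of the finitely generated projective modules $L$ and $M$; hence they are themselves finitely generated projective, and in particular flat. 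Taking the long exact $\mathrm{Tor}$-sequence of $-\otimes_{\OO(X)}\OO(Y)$ applied to the above short exact sequence then identifies $\mathrm{Tor}_1^{\OO(X)}(M^*/jL^{*2},\OO(Y))$ with the kernel of the induced map
\[
j_Y \colon L^{*2}\otimes_{\OO(X)}\OO(Y) \longrightarrow M^*\otimes_{\OO(X)}\OO(Y),
\]
and by Lemma \ref{bcimm} this identifies with the natural level-raising map $(\OCdY^{Q_Y})^{\oplus 2} \to \OCVYd^{Q_Y}$ induced by the same double coset operators $[U 1 V]$ and $[U \eta_\l V]$ defining $j$.

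Next, suppose $j_Y(\lambda_1,\lambda_2)=0$. Then, regarded as functions on $D_f^\times$, we have $\lambda_1 = -\lambda_2|\eta_\l^{-1}$ (the obvious dual analogue of the identity extracted in Lemma \ref{preihara}), so both $\lambda_2$ and $\lambda_2|\eta_\l^{-1}$ are $U$-invariant. Consequently $\lambda_2$ is invariant under the subgroup of $D_f^\times$ generated by $U$ and $\eta_\l U \eta_\l^{-1}$. Exactly as in the proof of \cite[Proposition 8]{chicomp} (which is the step invoked in Lemma \ref{preihara}), this invariance forces $\lambda_2$ --- and by symmetry $\lambda_1$ --- to factor through the reduced norm $Nm:D_f^\times \to \A_{F,f}^\times$.

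At this point the proof departs pleasantly from Lemma \ref{preihara}. Lemma \ref{normtriviald} asserts \emph{unconditionally} that an element of $\OCdY$ factoring through $Nm$ must vanish, so we conclude $\lambda_1 = \lambda_2 = 0$, and hence $\ker(j_Y) = 0$, which gives the desired vanishing of the $\mathrm{Tor}$ module. The main substantive work has already been absorbed into the preceding lemmata (injectivity of $j$, base change Lemma \ref{bcimm}, Ihara-type vanishing \ref{normtriviald}), so no real obstacle remains. The reason this statement is cleaner than Lemma \ref{preihara} --- no very Eisenstein error term appears --- is precisely the asymmetry between Lemmata \ref{normtrivial} and \ref{normtriviald} highlighted in the remark following the latter: on the dual side there is no residual space of functions factoring through $Nm$ at all.
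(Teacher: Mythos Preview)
Your proof is correct and follows exactly the approach the paper sketches: mimic Lemma \ref{preihara}, invoking Lemma \ref{bcimm} for the base-change identification on the dual side and Lemma \ref{normtriviald} in place of Lemma \ref{normtrivial} to obtain outright vanishing rather than a very Eisenstein error term. One harmless slip: since $j$ is defined by the \emph{same} double coset operators as $i$ (with trivial $p$-component), the relation should read $\lambda_1 = -\lambda_2|\eta_\l$ rather than $\lambda_2|\eta_\l^{-1}$, but the conclusion that $\lambda_2$ is invariant under $U$ and an $\eta_\l^{\pm 1}$-conjugate of $U$---hence factors through $Nm$---is unaffected.
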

\begin{proof}
We proceed as in the proof of lemma \ref{preihara}. With the help of lemma \ref{bcimm} and lemma \ref{normtriviald}, we obtain the desired result.
\end{proof}

The following consequence of the preceding two lemmas will be the most convenient analogue of Ihara's lemma for our applications.

\begin{lemma}\label{lemma:trivialtorsion}
\begin{enumerate} \item The $\cH$-module $(M/iL^2)^{\mathrm{tors}}$ is very Eisenstein. 
\item The module $(M^*/jL^{*2})^{\mathrm{tors}}$ is equal to $0$.\end{enumerate}
\end{lemma}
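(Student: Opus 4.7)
The plan is to reduce both parts to the preceding Lemmas \ref{preihara} and \ref{preiharadual} via the standard identification of $f$-torsion with a $\mathrm{Tor}_1$ module. Since $\OO(X)$ is a Noetherian domain, any nonzero $f\in\OO(X)$ is a non-zero-divisor, so tensoring the short exact sequence $0\to\OO(X)\xrightarrow{f}\OO(X)\to\OO(X)/(f)\to 0$ with any $\OO(X)$-module $N$ yields the canonical and $\cH$-equivariant isomorphism $N[f]\cong\mathrm{Tor}_1^{\OO(X)}(N,\OO(X)/(f))$; $\cH$-equivariance holds because $f$ commutes with all Hecke operators.

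For part (2), suppose for contradiction that some nonzero $\bar y\in(M^*/jL^{*2})^{\mathrm{tors}}$ is annihilated by some $f\in\OO(X)\setminus\{0\}$. Then $\bar y$ represents a nonzero element of $\mathrm{Tor}_1(M^*/jL^{*2},\OO(X)/(f))$. Inspecting the inductive vanishing argument in the proof of Lemma \ref{normtriviald}, one sees it works for any $K$-affinoid $\OO(Y)$, since the key scalar $(j_1+1)\pi_1^N/x_1$ lies in $K^\times\subset\OO(Y)^\times$ and is therefore a non-zero-divisor. Applying the proof of Lemma \ref{preiharadual} with $Y=V((f))$ (which is in general neither reduced nor irreducible) therefore gives $\mathrm{Tor}_1(M^*/jL^{*2},\OO(X)/(f))=0$, contradicting the existence of $\bar y$.

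For part (1), by the Remark following Definition \ref{eismodule} it suffices to verify the very Eisenstein conditions on each associated prime $\PP\in\mathrm{Ass}_{\cH}(T)$, where $T=(M/iL^2)^{\mathrm{tors}}$. Fix such $\PP$, pick $\bar x\in T$ with $\mathrm{Ann}_{\cH}(\bar x)=\PP$, and set $\p=\PP\cap\OO(X)$; this is nonzero because $\bar x$ is torsion. For any nonzero $f\in\p$ the isomorphism above places $\PP$ in $\mathrm{Ass}_{\cH}(\mathrm{Tor}_1(M/iL^2,\OO(X)/(f)))$, so it suffices to prove this Tor module is very Eisenstein. This requires an extension of Lemma \ref{preihara} to the possibly reducible and non-reduced sub-affinoid $Y=V((f))$: for any prime $\PP'$ of $\cH$ in the support, $\p'=\PP'\cap\OO(X)\supseteq(f)$, and so $\p'\supseteq\mathfrak{q}$ for some prime $\mathfrak{q}$ minimal over $(f)$; one specializes the witnessing kernel element $(y_1,y_2)$ of the level-raising map to the reduced irreducible sub-affinoid $V(\mathfrak{q})\subseteq V((f))$ and applies Lemma \ref{normtrivial} there to obtain the character $\psi$, whose reduction modulo $\p'$ supplies the required very Eisenstein data for $\PP'$. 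The main technical obstacle is thus justifying these extensions of Lemmas \ref{preihara}, \ref{preiharadual}, \ref{normtrivial}, and \ref{normtriviald} from reduced irreducible to arbitrary closed sub-affinoids $V((f))$: the dual-module extensions are entirely formal, while the primal-module extensions require a component-by-component argument on $V(\sqrt{(f)})$.
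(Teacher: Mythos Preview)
Your overall strategy matches the paper's: pick an associated prime $\PP$ of the torsion submodule, choose a nonzero $f\in\p=\PP\cap\OO(X)$, and use the canonical $\cH$-equivariant identification of $f$-torsion with $\mathrm{Tor}_1^{\OO(X)}(-,\OO(X)/(f))$ to reduce to a statement about this Tor module. Where you diverge is in the next step. The paper does \emph{not} try to extend Lemmas \ref{preihara} and \ref{preiharadual} to the non-reduced, possibly reducible affinoid $V((f))$. Instead it applies \cite[Theorem 6.4]{Matsumura} to obtain a finite chain $0=M_0\subset M_1\subset\cdots\subset M_n=\OO(X)/(f)$ with each $M_i/M_{i-1}\cong\OO(X)/\p_i$ for some prime $\p_i$, and then uses the long exact sequence of Tor together with Lemma \ref{sesok} (closure of ``very Eisenstein'' under extensions) to reduce to showing that $\mathrm{Tor}_1^{\OO(X)}(M/iL^2,\OO(X)/\p_i)$ is very Eisenstein for each prime $\p_i$. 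This is exactly Lemma \ref{preihara}, applied with $Y=V(\p_i)$ reduced and irreducible. The same d\'evissage with Lemma \ref{preiharadual} gives part (2).

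Your alternative route is not wrong in spirit, but the argument you sketch for part (1) has a genuine gap. You say: for $\PP'$ in the $\cH$-support of $\mathrm{Tor}_1(M/iL^2,\OO(X)/(f))$, pick a minimal prime $\mathfrak{q}$ over $(f)$ contained in $\p'=\PP'\cap\OO(X)$ and ``specialize the witnessing kernel element $(y_1,y_2)$'' to $V(\mathfrak{q})$. But an arbitrary prime in the support need not be the annihilator of any single element, and even if it were, there is no reason the image of such an element under $L^2\otimes\OO(X)/(f)\to L^2\otimes\OO(X)/\mathfrak{q}$ is nonzero (the element could be supported on a different irreducible component of $V((f))$). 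So you cannot in general feed a nonzero element into Lemma \ref{normtrivial} over $V(\mathfrak{q})$ this way. The prime-filtration d\'evissage sidesteps this entirely: it never tries to specialise individual elements, only to relate the various Tor modules via exact sequences. For part (2) your direct extension is closer to correct --- the Sublemma in Lemma \ref{normtriviald} really only needs the scalar $(j_1+1)\pi_1^N/x_1$ to be a unit in $\OO(Y)$, which it is --- but you would still need to check that Lemma \ref{bcimm} and the identification of $L^*\otimes\OO(Y)$ with a space of dual forms go through for non-reduced $Y$ (they do, but the paper's conventions only state them for reduced $Y$). The d\'evissage makes all of this unnecessary.
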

\begin{proof}Since $X$ is a disjoint union of reduced and irreducible admissible open affinoids, we may assume without loss of generality that $X$ is reduced and irreducible (hence $\OO(X)$ is an integral domain). We begin by proving the first part of the lemma. Suppose $\mathfrak{P}$ is an associated prime ideal (in $\cH$) of $(M/iL^2)^{\mathrm{tors}}$. So we have $m \in M/iL^2$ and $0\ne\alpha \in \OO(X)$ with $\alpha m = 0$ and $\mathrm{ann}_{\cH}(m) = \mathfrak{P}$. Since $\alpha$ is not a zero-divisor, the $\cH$-submodule of $\alpha$-torsion elements in $M/iL^2$ is isomorphic to $\mathrm{Tor}^{\OO(X)}_1(M/iL^2,\OO(X)/(\alpha))$. So it suffices to prove that the $\cH$-module $\mathrm{Tor}^{\OO(X)}_1(M/iL^2,\OO(X)/(\alpha))$ is very Eisenstein.

By \cite[Theorem 6.4]{Matsumura} there is a chain $0=M_0\subset M_1\subset\cdots\subset M_n = \OO(X)/(\alpha)$ of $\OO(X)$-submodules such that for each $i$ we have $M_i/M_{i-1} \cong \OO(X)/\p$ with $\p$ a prime ideal of $\OO(X)$. By applying lemma \ref{sesok} and a simple d\'{e}vissage, we are done if we can prove that  $\mathrm{Tor}^{\OO(X)}_1(M/iL^2,\OO(X)/\p)$ is very Eisenstein for any prime ideal $\p$ of $\OO(X)$, which is precisely lemma \ref{preihara}.

The second part of the lemma follows similarly from lemma \ref{preiharadual}.
\end{proof}
\begin{corollary}\label{cor:trivtors}
The modules $\Lambda_2/\Lambda_3$ and $\Lambda_0^*/\Lambda_1^*$ are very Eisenstein. The modules $\Lambda_2^*/\Lambda_3^*$ and $\Lambda_0/\Lambda_1$ are equal to $0$.
\end{corollary}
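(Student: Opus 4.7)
The plan is to combine the Ihara-type input of Lemma \ref{lemma:trivialtorsion} with the identifications and perfect pairings established in Section \ref{somemodules}. Recall from that section that $\Lambda_2/\Lambda_3 = i^\dagger((M/iL^2)^{\mathrm{tors}})$ and $\Lambda_2^*/\Lambda_3^* = j^\dagger((M^*/jL^{*2})^{\mathrm{tors}})$. Moreover, since $i$ (respectively $j$) is injective on $L^2$ (respectively $L^{*2}$, using the remark at the start of Section \ref{somemodules}), an element of $M$ lies in $i(L_E^2)$ precisely when it represents a torsion class modulo $iL^2$, so the quotients $(M\cap i(L_E^2))/i(L^2)$ and $(M^*\cap j((L_E^*)^2))/j(L^{*2})$ appearing in $P_2$ and $P_1$ coincide with $(M/iL^2)^{\mathrm{tors}}$ and $(M^*/jL^{*2})^{\mathrm{tors}}$ respectively.

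The two vanishing statements then follow immediately. By Lemma \ref{lemma:trivialtorsion}(2), $(M^*/jL^{*2})^{\mathrm{tors}}=0$, so $\Lambda_2^*/\Lambda_3^* = j^\dagger(0) = 0$. Feeding this vanishing into the perfect pairing
\[ P_1 : \Lambda_0/\Lambda_1 \times (M^*/jL^{*2})^{\mathrm{tors}} \rightarrow E/\OO(X) \]
forces $\Lambda_0/\Lambda_1=0$ as well.

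For the very Eisenstein statements, Lemma \ref{lemma:trivialtorsion}(1) gives that $(M/iL^2)^{\mathrm{tors}}$ is very Eisenstein. Since $\Lambda_2/\Lambda_3$ is an $\cH$-linear quotient of this module via $i^\dagger$, its $\cH$-support is contained in that of $(M/iL^2)^{\mathrm{tors}}$, and the very Eisenstein property depends only on the support (see Definition \ref{eismodule} and the remark after it), so $\Lambda_2/\Lambda_3$ is very Eisenstein. For $\Lambda_0^*/\Lambda_1^*$, the perfect pairing $P_2$ gives a $\cH$-linear isomorphism
\[ \Lambda_0^*/\Lambda_1^* \cong \Hom_{\OO(X)}((M/iL^2)^{\mathrm{tors}}, E/\OO(X)), \]
and the second assertion of Lemma \ref{sesok} shows that the target is again very Eisenstein, completing the proof. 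No real obstacle is expected here; all the heavy lifting is already in Lemma \ref{lemma:trivialtorsion}, and one only needs to record that $i^\dagger$, $j^\dagger$ and the identifications induced by $P_1$, $P_2$ are $\cH$-linear, which is built into the constructions since the relevant double coset operators have trivial components at places dividing $p\l$ and therefore commute with the Hecke operators in $\mathbb{T}^{(\n\delta p\l)}$.
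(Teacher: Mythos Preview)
Your proposal is correct and follows essentially the same route as the paper's proof: both deduce the four statements from Lemma~\ref{lemma:trivialtorsion} together with the perfect pairings $P_1$, $P_2$ (and, for the very Eisenstein conclusion on $\Lambda_0^*/\Lambda_1^*$, the second part of Lemma~\ref{sesok}). Your write-up is simply more explicit about the identifications $(M\cap i(L_E^2))/iL^2\cong (M/iL^2)^{\mathrm{tors}}$ and $(M^*\cap j((L_E^*)^2))/jL^{*2}\cong (M^*/jL^{*2})^{\mathrm{tors}}$ and about the $\cH$-equivariance, all of which the paper uses implicitly.
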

\begin{proof}
The statement about $\Lambda_2/\Lambda_3$ follows from the first part of lemma \ref{lemma:trivialtorsion}, since $\Lambda_2/\Lambda_3$ is a homomorphic image of $(M/iL^2)^{\mathrm{tors}}$. The statement about $\Lambda_0^*/\Lambda_1^*$ follows from the duality (via pairing $P_2$) between this module and $\Lambda_2/\Lambda_3$. The other two statements similarly follow from the second part of lemma \ref{lemma:trivialtorsion}.
\end{proof}

\section{Raising the level}
\subsection{Very Eisenstein ideals}
\begin{definition}\label{eisideal}
We say that a prime ideal $\mathfrak{P}$ of $\cH$ is \emph{very Eisenstein} if the $\cH$-module $\cH/\mathfrak{P}$ is very Eisenstein. Equivalently, we require $\mathfrak{P}$ to satisfy the itemised conditions in Definition \ref{eismodule}.
\end{definition} 
Recall that if $\mathfrak{M}$ is a maximal ideal of $\cH$, then there is an attached semi-simple Galois representation $\rho_\mathfrak{M}:\Gal(\overline{F}/F) \rightarrow \GL_2(\overline{\Q}_p)$. We normalise things so that the $T_v$ eigenvalue arising from $\mathfrak{M}$ is equal to the trace of $\rho_\mathfrak{M}(\sigma_v)$, where $\sigma_v$ denotes an \emph{arithmetic} Frobenius element. If $\mathfrak{M}$ is very Eisenstein, then this Galois representation is reducible, but the requirement that the character $\nn: \OO_p^\times \rightarrow (\OO(X)/\mathfrak{M})^\times$ is finite order imposes an additional restriction on the Galois representation (that can be translated into a condition on its generalised Hodge--Tate weights).

Recall that $\cH$ denotes the image of $\mathbb{T}^{(\n\delta p\l)}$ in $\End_{\OO(X)}(M)$. We similarly let $\cH_L$ denote the image of $\mathbb{T}^{(\n\delta p)}$ in $\End_{\OO(X)}(L)$.
% and let $\cH^(\l)$ denote the image of $\mathbb{T}^{(\n\delta p)}$ in $\End_{\OO(X)}(M)$. 
There is a map $\cH\rightarrow \cH_L$ coming from the embedding $L \hookrightarrow M$ (given by regarding a form of level $U$ as a form of level $V$). If $I$ is an ideal of $\cH_L$ we denote by $I_M$ the inverse image of $I$ in $\cH$. Note that if $N$ is any finitely generated $\cH_L$-module, then $\mathrm{ann}_{\cH}(N) = (\mathrm{ann}_{\cH_L}(N))_M$, and so if $\PP$ is in $\mathrm{Supp}_{\cH_L}(N)$ then $\PP_\N$ is in $\mathrm{Supp}_{\cH}(N)$.

\begin{proposition}\label{supporttheorem}Suppose $\mathfrak{P}$ is a prime ideal of $\cH_L$ such that $\mathfrak{P}_M$ is not very Eisenstein. Moreover, suppose that $\mathfrak{P}$ contains $T_\l^2-(\mathbf{N}\l+1)^2 S_\l$. Then $\mathfrak{P}_M$ is in the support of the $\cH$-module $\mathrm{ker}(i^\dagger)\subset M$.
\end{proposition}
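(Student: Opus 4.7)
The plan is to argue by duality and contradiction, translating the desired statement about $\mathrm{Supp}_\cH(\ker(i^\dagger))$ into one about $\mathrm{Supp}_\cH(M^*/j(L^{*2}))$, and then using the explicit matrix for $j^\dagger j$ to derive a contradiction from $T_\l^2-(\N\l+1)^2S_\l\in\mathfrak{P}$. By Corollary~\ref{cor:trivtors}, $\Lambda_0/\Lambda_1=0$, so the perfect $\OO(X)$-pairing $P_1$ forces $M^*\cap j((L_E^*)^2)=j(L^{*2})$, and $P_3$ specialises to a perfect $\OO(X)$-pairing
$$\ker(i^\dagger)\times M^*/j(L^{*2})\longrightarrow\OO(X).$$
By Proposition~\ref{prop:pairinghecke} this pairing is $\cH$-equivariant (Hecke operators at places away from $\n\delta p\l$ are adjoint to themselves under $\langle\cdot,\cdot\rangle$), so the two sides have equal $\cH$-annihilators and hence the same $\cH$-support. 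It therefore suffices to show $\mathfrak{P}_M\in\mathrm{Supp}_\cH(M^*/j(L^{*2}))$.

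Suppose for contradiction that $\mathfrak{P}_M\notin\mathrm{Supp}_\cH(M^*/j(L^{*2}))$. Localising at $\mathfrak{P}_M$ makes $j\colon L^{*2}_{\mathfrak{P}_M}\twoheadrightarrow M^*_{\mathfrak{P}_M}$ surjective. On the other hand, Corollary~\ref{cor:trivtors} also shows that $\Lambda_0^*/\Lambda_1^*=L^{*2}/j^\dagger(M^*)$ is very Eisenstein, and the assumption that $\mathfrak{P}_M$ is not very Eisenstein places it outside this support, so $j^\dagger\colon M^*_{\mathfrak{P}_M}\twoheadrightarrow L^{*2}_{\mathfrak{P}_M}$ is surjective as well. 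Composing yields a surjection $j^\dagger j\colon L^{*2}_{\mathfrak{P}_M}\twoheadrightarrow L^{*2}_{\mathfrak{P}_M}$.

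Now $j^\dagger j$ acts by the matrix $A=\begin{pmatrix}\N\l+1 & T_\l\\ S_\l^{-1}T_\l & \N\l+1\end{pmatrix}\in M_2(\cH_L)$, whose determinant $-S_\l^{-1}(T_\l^2-(\N\l+1)^2 S_\l)$ lies in $\mathfrak{P}$ (as $S_\l$ is a unit in $\cH_L$). Since $\mathfrak{P}_M$ is the preimage of $\mathfrak{P}$ along $\cH\to\cH_L$, elements of $\cH\setminus\mathfrak{P}_M$ map into $\cH_L\setminus\mathfrak{P}$, so $(\cH_L)_\mathfrak{P}$ is a further localisation of $(\cH_L)_{\mathfrak{P}_M}$, and the above surjection persists as $A\colon L^{*2}_\mathfrak{P}\twoheadrightarrow L^{*2}_\mathfrak{P}$. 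Setting $k=\mathrm{Frac}(\cH_L/\mathfrak{P})$ and $V=L^*_\mathfrak{P}\otimes_{(\cH_L)_\mathfrak{P}}k$, reducing mod $\mathfrak{P}$ gives a surjection $A\colon V^{\oplus 2}\twoheadrightarrow V^{\oplus 2}$; here $V\neq 0$ because $\cH_L$ acts faithfully on $L^*$ (via the perfect pairing with $L$ together with Proposition~\ref{prop:pairinghecke}), so $\mathfrak{P}\in\mathrm{Supp}_{\cH_L}(L^*)$. But $A\in M_2(k)$ has $\det A=0$, hence rank at most $1$, so its image has $k$-dimension at most $\dim_k V<2\dim_k V$. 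This contradiction completes the proof. The main bookkeeping obstacle, as I see it, is juggling the two Hecke algebras $\cH$ and $\cH_L$ simultaneously: the matrix identity for $j^\dagger j$ lives in $\cH_L$ where $T_\l, S_\l$ are present, while the support assertion concerns $\cH$ where they are absent, and one must verify that the two localisations compose compatibly.
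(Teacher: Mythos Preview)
Your proof is correct and follows essentially the same approach as the paper's own argument. Both proofs use the matrix description of $j^\dagger j$ together with the determinant condition $T_\l^2-(\mathbf{N}\l+1)^2 S_\l\in\mathfrak{P}$, Corollary~\ref{cor:trivtors} (the very Eisenstein property of $\Lambda_0^*/\Lambda_1^*$ and the vanishing of $(M^*/jL^{*2})^{\mathrm{tors}}$), and the pairing $P_3$ to transfer the support statement to $\ker(i^\dagger)$; the paper runs the filtration argument directly on $\Lambda_0^*/\Lambda_3^*$, whereas you phrase the same chain of implications contrapositively, but the logical content is identical.
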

\begin{proof}
Consider the $\cH_L$-module $$Q:=\Lambda_0^*/\Lambda_3^*=L^{*2}\left/
L^{*2}\begin{pmatrix}\mathbf{N}\l+1 & T_\l \\ S_\l^{-1}T_\l & \mathbf{N}\l+1 \end{pmatrix},\right.$$ The $\cH_L$-supports of $L$ and $L^*$ are equal, and $\mathrm{det}\begin{pmatrix}\mathbf{N}\l+1 & T_\l \\ S_\l^{-1}T_\l & \mathbf{N}\l+1 \end{pmatrix}\in \mathfrak{P}$, so $\mathfrak{P}$ is in support of $Q$ (the $\cH_{L,\mathfrak{P}}$-module $Q_\mathfrak{P}/\mathfrak{P}Q_\mathfrak{P}$ is non-zero since it is the cokernel of a map of vector spaces with determinant zero). Thereform $\PP_M$ is in the support of $Q$.

Corollary \ref{cor:trivtors} implies that if $\PP_{M}$ is in the support of $\Lambda_0^*/\Lambda_1^*$ or $\Lambda_2^*/\Lambda_3^*$ then it is very Eisenstein, so it must be in the support of $\Lambda_1^*/\Lambda_2^*$. This quotient is a homomorphic image of $M^*/(M^*\cap j(L_F^{*2}))$, so $\PP_{M}$ is in the support of $M^*/(M^*\cap j(L_F^{*2}))$. Finally we can apply pairing $P_3$ (which is equivariant with respect to the action of $\cH$) to conclude that $\PP_M$ is in the support of $\mathrm{ker}(i^\dagger)$. 
\end{proof}

\section{Applications}\label{apps}
In this section we explain an application of the preceding results.

\subsection{Level raising for $p$-adic Hilbert modular forms}

Let $\mathscr{E}^D(\n)$ be the reduced eigenvariety of tame level $U_1(\n)$ as defined in Definition \ref{eigdef}. Similarly we denote by $\mathscr{E}^{D}(\n\l)$ the reduced eigenvariety of tame level $U_1(\n)\cap U_0(\l)$, where we construct this eigenvariety using the Hecke operators at $\l$ in addition to the usual Hecke operators away from the level. This allows us to relate $\mathscr{E}^{D}(\n\l)$ and the reduction $\mathscr{E}^{D,\l\mhyphen\mathrm{old}}(\n\l)$ of the two-covering of $\mathscr{E}^D$ corresponding to taking roots of the $\l$-Hecke polynomial.

\begin{lemma}\label{OLD}
There is a closed embedding $\mathscr{E}^{D,\l\mhyphen\mathrm{old}}(\n\l)\hookrightarrow \mathscr{E}^{D}(\n\l)$, with image the Zariski closure of the essentially classical $\l$-old points in $\mathscr{E}^{D}(\n\l)$.
\end{lemma}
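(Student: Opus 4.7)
The plan is to identify $\mathscr{E}^{D,\l\mhyphen\mathrm{old}}(\n\l)$ with the closed subscheme of $\mathscr{E}^{D}(\n\l)$ cut out by the Hecke-algebra annihilator of the $\l$-oldform submodule, and then to pin down the image via Zariski density of essentially classical points. Fix an admissible affinoid open $X \hookrightarrow \mathscr{W}$ and a slope-decomposition polynomial $Q$; set $L = \OC^Q$ and $M = \OCV^Q$ with $U = \UU$, $V = \VV$, and consider the level-raising map $i : L \oplus L \to M$ of Section \ref{oldnew}, which is injective by Proposition \ref{propinj}. Let $Z := i(L \oplus L) \subset M$, a submodule stable under $T_v, S_v$ (for $v \nmid \n\l\delta p$), $U_\pi$, $U_\l$, and $S_\l$. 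A coset-representative computation (entirely parallel to the classical Atkin--Lehner theory of $\l$-oldforms) shows that, in the basis supplied by $i$, the operator $U_\l$ on $Z \cong L \oplus L$ acts by a $2 \times 2$ matrix over $\cH_L$ whose characteristic polynomial is the $\l$-Hecke polynomial $T^2 - T_\l T + \N\l S_\l$; here $\cH_L$ denotes the image of $\mathbb{T}^{(\delta p\n)}[U_\pi]$ in $\End_{\OO(X)}(L)$, as in the proof of Proposition \ref{propinj}. This polynomial is also the minimal polynomial of the action over the generic point, so the natural surjection $\cH_L[T]/(T^2 - T_\l T + \N\l S_\l) \twoheadrightarrow \cH_Z$ sending $T \mapsto U_\l$ is an isomorphism, where $\cH_Z$ denotes the image of the full level-$\n\l$ Hecke algebra $\mathbb{T}^{(\delta p\n\l)}[U_\pi, U_\l, S_\l]$ in $\End_{\OO(X)}(Z)$.

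Since $\cH_Z$ is a quotient of the Hecke algebra $\cH$ acting on $M$ used in Buzzard's construction of $\mathscr{E}^{D}(\n\l)$, taking $\Spec$ over $\OO(X)$, passing to reductions, and gluing over an admissible affinoid cover of $\mathscr{W}$ (base-change compatibility coming from Lemmas \ref{bc} and \ref{bcimm}) yields the desired closed immersion $\mathscr{E}^{D,\l\mhyphen\mathrm{old}}(\n\l) \hookrightarrow \mathscr{E}^{D}(\n\l)$: by construction, $\Spec(\cH_L[T]/(T^2 - T_\l T + \N\l S_\l))$ is the local model for the two-covering whose reduction is $\mathscr{E}^{D,\l\mhyphen\mathrm{old}}(\n\l)$. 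For the image identification, essentially classical points of $\mathscr{E}^D(\n)$ are Zariski dense by Theorem \ref{Zardens}, hence so are their preimages in the finite cover $\mathscr{E}^{D,\l\mhyphen\mathrm{old}}(\n\l)$; these preimages are sent by the embedding to essentially classical $\l$-old points of $\mathscr{E}^{D}(\n\l)$, since at a classical point the two $U_\l$-eigenvalues on the two-dimensional $\l$-old subspace of the associated cuspidal automorphic representation are exactly the two roots of the $\l$-Hecke polynomial. As a closed subspace of $\mathscr{E}^{D}(\n\l)$ containing a Zariski dense set of such points, the image equals their Zariski closure.

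The main obstacle is the identification $\cH_Z \cong \cH_L[T]/(T^2 - T_\l T + \N\l S_\l)$: one must compute $U_\l$ acting on $Z$ explicitly in the basis coming from $i$ and then verify that the $\l$-Hecke polynomial is not only satisfied but is in fact the minimal polynomial of the action over the generic point, so that no additional Hecke relations collapse $\cH_Z$ below its expected rank. Once this is in place, matching with Buzzard's eigenvariety construction and deducing the image description by density are essentially formal.
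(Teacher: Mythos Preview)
Your argument is correct and is essentially the content of \cite[Lemma 14]{chicomp}, to which the paper's proof simply defers. The identification of the local Hecke algebra on the oldform submodule $i(L\oplus L)$ with $\cH_L[T]/(T^2-T_\l T+\N\l S_\l)$ via the Atkin--Lehner computation of $U_\l$, followed by gluing and the Zariski density of essentially classical points to pin down the image, is precisely the strategy of the cited lemma.
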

\begin{proof}
This is proved exactly as \cite[Lemma 14]{chicomp}.
\end{proof}

We need one more definition before we can state our main theorem. Recall that $\mathscr{E}^{D}(\n\l)$ has an admissible open cover by affinoids $Sp(\mathbf{T}^{\mathrm{red}})$, where $\mathbf{T}$ is the image of $\mathbb{T}^{(\n p\delta)}[U_\pi]$ in $\End(\OCV^Q)$, with $X \subset \mathscr{W}$ and admissible open affinoid, $Q$ a suitable polynomial factor of the characteristic power series of $U_\pi$ on $\OCV$ and $V = \VV$ for suitable $r,\alpha$. For $\mathbf{T}$ of this form, we define $\mathbf{T}^{\l\mhyphen\mathrm{new}}$ to be the quotient of $\mathbf{T}$ given by restricting the Hecke operators to $\ker(i^\dagger)\subset \OCV^Q$.

\begin{proposition}\label{neweig}
There exists a unique reduced closed subspace \[\iota:\mathscr{E}^{D}(\n\l)^{\l\mhyphen\mathrm{new}} \hookrightarrow \mathscr{E}^{D}(\n\l)\] such that pulling $\iota$ back to the admissible open $Sp(\mathbf{T}^{\mathrm{red}})$ of $\mathscr{E}^{D}(\n\l)$ gives the closed immersion \[Sp(\mathbf{T}^{\l\mhyphen\mathrm{new},\mathrm{red}})\hookrightarrow Sp(\mathbf{T}^{\mathrm{red}}).\]

Moreover, $\mathscr{E}^{D}(\n\l)^{\l\mhyphen\mathrm{new}}$ is equidimensional of dimension $\dim(\mathscr{W})$, and is therefore a union of irreducible components of $\mathscr{E}^{D}(\n\l)$.
\end{proposition}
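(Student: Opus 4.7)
The construction of $\mathscr{E}^{D}(\n\l)^{\l\mhyphen\mathrm{new}}$ proceeds locally. For each admissible affinoid $Sp(\mathbf{T}^{\mathrm{red}})$ in the cover of $\mathscr{E}^{D}(\n\l)$ (with associated data $(X,Q,V)$), the quotient $\mathbf{T} \twoheadrightarrow \mathbf{T}^{\l\mhyphen\mathrm{new}}$ yields a closed immersion $Sp(\mathbf{T}^{\l\mhyphen\mathrm{new},\mathrm{red}}) \hookrightarrow Sp(\mathbf{T}^{\mathrm{red}})$. The task is to glue these local closed subspaces to a unique global closed subspace, and then to verify equidimensionality.

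Step 1 (gluing). I would check that the formation of $\mathbf{T}^{\l\mhyphen\mathrm{new}}$ commutes with the two refinement operations occurring in the admissible cover: restriction of $X$ to a sub-affinoid $Y \hookrightarrow X$, and passage to a finer slope factorisation $Q' \mid Q$. For the first, Lemma \ref{bc} gives base change compatibility of $\OCV$ (and $\OC$); since $\OCV^Q$ is finite projective over $\OO(X)$ we have $\OCV^Q \otimes_{\OO(X)} \OO(Y) \cong \OCVY^{Q_Y}$, and combined with the fact that $i^\dagger$ is defined by double coset operators trivial at $p$ (so commutes with both slope decomposition and base change) this gives $\ker(i^\dagger_X) \otimes_{\OO(X)} \OO(Y) = \ker(i^\dagger_Y)$. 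Compatibility with slope refinement is similar, using that a finer $Q'$ cuts out a Hecke-stable direct summand preserved by $i^\dagger$. Together these imply that the local new Hecke algebras glue to a well-defined closed subspace $\iota : \mathscr{E}^{D}(\n\l)^{\l\mhyphen\mathrm{new}} \hookrightarrow \mathscr{E}^{D}(\n\l)$, which is unique by the intrinsic characterisation on each affinoid piece.

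Step 2 (equidimensionality). I would work over an admissible cover of $\mathscr{W}$ by reduced irreducible affinoids $X$; smoothness of $\mathscr{W}$ makes such a cover available, and on each piece $\OO(X)$ is a normal integral domain of dimension $\dim \mathscr{W}$. The module $\OCV^Q$ is finitely generated projective over $\OO(X)$, hence $\OO(X)$-torsion-free, and so is its submodule $\ker(i^\dagger)$. By construction $\mathbf{T}^{\l\mhyphen\mathrm{new}}$ embeds into $\End_{\OO(X)}(\ker(i^\dagger))$, so $\mathbf{T}^{\l\mhyphen\mathrm{new}}$ is itself $\OO(X)$-torsion-free; as a quotient of $\mathbf{T}$ it is also finite, hence integral, over $\OO(X)$. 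Lemma \ref{chentf} then gives equidimensionality of $\mathbf{T}^{\l\mhyphen\mathrm{new}}$ (and of its reduction) of dimension $\dim(\OO(X)) = \dim \mathscr{W}$. Gluing over the cover yields the global equidimensionality statement. Since $\mathscr{E}^{D}(\n\l)$ is itself equidimensional of the same dimension by Theorem \ref{Zardens}, the closed immersion $\iota$ realises $\mathscr{E}^{D}(\n\l)^{\l\mhyphen\mathrm{new}}$ as a union of irreducible components (cf.\ \cite[Proposition 1.2.3]{CM}, used analogously in the proof of Proposition \ref{propinj}).

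The principal obstacle is the gluing in Step 1: one must carefully track through the admissible cover of the eigenvariety and verify compatibility of $\ker(i^\dagger)$ with both base change in the weight-space coordinate and refinement of the slope decomposition of $U_\pi$. Once those compatibilities are in place, the equidimensionality assertion is an essentially formal consequence of the torsion-free action of $\mathbf{T}^{\l\mhyphen\mathrm{new}}$ on a submodule of a finite projective $\OO(X)$-module, via Lemma \ref{chentf}.
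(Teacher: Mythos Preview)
Your proposal is correct and follows essentially the same approach as the paper: glue the local closed immersions by checking that the formation of $\ker(i^\dagger)$ (hence of $\mathbf{T}^{\l\mhyphen\mathrm{new}}$) is compatible with base change in weight, then deduce equidimensionality from the $\OO(X)$-torsion-freeness of $\ker(i^\dagger)$ via Lemma~\ref{chentf}. The paper's proof is slightly terser (it omits the explicit discussion of refinement in the slope variable $Q' \mid Q$, leaving the gluing details to the reader), but the argument is otherwise the same.
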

\begin{proof}
We simply need to glue the closed immersions \[Sp(\mathbf{T}^{\l\mhyphen\mathrm{new}})\hookrightarrow Sp(\mathbf{T})\] and then pass to reduced spaces. 

For $Y \subset X$ an admissible open sub-affinoid, recall that there are Hecke equivariant isomorphisms $\OC^Q\otimes_{\OO(X)}\OO(Y) \cong \OCY^{Q_Y}$ and $\OCV^Q\otimes_{\OO(X)}\OO(Y) \cong \OCVY^{Q_Y}$ which allow us to identify $\ker(i^\dagger)\otimes_{\OO(X)}\OO(Y)$ with \[\ker[i^\dagger_Y: \OCVY^{Q_Y} \rightarrow (\OCY^{Q_Y})^{\oplus 2}].\] Therefore the image of $\mathbb{T}^{(\n p\delta)}[U_\pi]$ in $\End(\ker(i^\dagger_Y))$ can be identified with $\mathbf{T}\otimes_{\OO(X)}\OO(Y)$. This is enough to glue our closed immersions (we leave the details to the reader). Note that since all our $\OO(X)$-modules are finitely generated, the ordinary tensor products $\otimes_{\OO(X)}\OO(Y)$ in the above can be replaced with completed tensor products \cite[3.7.3, Proposition 6]{BGR}.

The fact that $\mathscr{E}^{D}(\n\l)^{\l\mhyphen\mathrm{new}}$ is equidimensional of dimension $\dim(\mathscr{W})$ follows from the fact that (for irreducible $X$) the $\OO(X)$-module $\ker(i^\dagger)$ is $\OO(X)$-torsion free, so we can again apply lemma \ref{chentf}, as in the proof of proposition \ref{propinj}.
\end{proof}
We can now state the main theorem of this paper.
\begin{theorem}\label{raise}
Suppose we have a point $\phi \in \mathscr{E}^D(\n)$ which is not very Eisenstein\footnote{i.e. it is associated with a maximal ideal of a Hecke algebra which is not very Eisenstein}, and with $T_\l^2(\phi)-(\mathbf{N}\l+1)^2S_\l(\phi)=0$. Let the roots of the $\l$-Hecke polynomial corresponding to $\phi$ be $\alpha$ and $\mathbf{N}\l \alpha$ where $\alpha \in \C_p^\times$. Then the point over $\phi$ of $\mathscr{E}^{D,\l-\mathrm{old}(\n\l)}$ corresponding to $\alpha$ also lies in $\mathscr{E}^{D}(\n\l)^{\l\mhyphen\mathrm{new}}$.
\end{theorem}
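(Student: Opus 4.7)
The plan is to reduce the theorem to Proposition~\ref{supporttheorem} via a straightforward localization: the two hypotheses on $\phi$ translate into the two hypotheses of that proposition, and Proposition~\ref{neweig} repackages the output as the desired statement about the eigenvariety. First I would pass to a local model. The eigenvariety $\mathscr{E}^D(\n)$ is covered by affinoids built from spaces of overconvergent forms on appropriate $X \subset \mathscr{W}$ as in Definition~\ref{ocformdef}; fix such local data --- a reduced affinoid $X$ containing the image of $\phi$ in weight space, an analyticity radius, a wild-level exponent, a tame level $U = \UU$, and a polynomial factor $Q$ of the characteristic power series of $U_\pi$ on $\OC$ --- so that $\phi$ corresponds to a maximal ideal $\mathfrak{m}$ of the Hecke algebra $\mathbf{T}_L$ acting on $L := \OC^Q$. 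Let $V = \VV$, $M := \OCV^Q$, and let $\cH$ be the image of $\mathbb{T}^{(\n\delta p\l)}[U_\pi]$ in $\End(M)$, so that the natural map $\cH \to \mathbf{T}_L$ of Section~\ref{oldnew} pulls $\mathfrak{m}$ back to an ideal $\mathfrak{m}_M \subset \cH$ recording the away-from-$\l$ eigensystem of $\phi$. The hypothesis $T_\l^2(\phi) - (\N\l+1)^2 S_\l(\phi) = 0$ gives $T_\l^2 - (\N\l+1)^2 S_\l \in \mathfrak{m}$, and the assumption that $\phi$ is not very Eisenstein (a condition depending only on the away-from-$\l$ eigensystem) means that $\mathfrak{m}_M$ is not very Eisenstein. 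Both hypotheses of Proposition~\ref{supporttheorem} are thus in force.

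Applying Proposition~\ref{supporttheorem}, I conclude that $\mathfrak{m}_M$ lies in the support of the $\cH$-module $\ker(i^\dagger) \subset M$. Hence the Hecke algebra acting on $\ker(i^\dagger)$ (including the $U_\l$-operator and reducing to $\mathbf{T}^{\l\mhyphen\mathrm{new}}$) admits a maximal ideal $\mathfrak{n}$ lying over $\mathfrak{m}_M$, which by the construction in Proposition~\ref{neweig} corresponds to a point $\tilde\phi \in \mathscr{E}^D(\n\l)^{\l\mhyphen\mathrm{new}}$ sharing the away-from-$\l$ Hecke eigenvalues of $\phi$. To finish, one checks that the $U_\l$-eigenvalue of $\tilde\phi$ is a root of the $\l$-Hecke polynomial $X^2 - T_\l(\phi)X + \N\l S_\l(\phi)$ of $\phi$, hence one of $\alpha$ or $\N\l\alpha$; up to the labeling convention for $\alpha$, the point $\tilde\phi$ is then identified with the image in $\mathscr{E}^D(\n\l)$ of the $\alpha$-point of $\mathscr{E}^{D,\l\mhyphen\mathrm{old}}(\n\l)$.

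The main obstacle I expect is precisely this last identification: verifying that the $U_\l$-eigenvalue of $\tilde\phi$ does satisfy the $\l$-Hecke polynomial of $\phi$, i.e.\ that $U_\l^2 - T_\l U_\l + \N\l S_\l$ annihilates the relevant localization of $\ker(i^\dagger)$. This should follow from a short Atkin-Lehner-type computation using the double-coset expressions for $i$, $i^\dagger$, $U_\l$, $T_\l$ and $S_\l$ recorded in Subsection~\ref{oldnew}, together with the fact that the hypothesis $T_\l^2 = (\N\l+1)^2 S_\l$ forces $\alpha^2 = S_\l(\phi)$, so that $\alpha$ is automatically one of the two possible $U_\l$-values on an $\l$-new specialisation. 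The essential new content of the theorem --- the existence of a congruence between $\l$-old and $\l$-new overconvergent forms over $\phi$ --- is already delivered by Proposition~\ref{supporttheorem}.
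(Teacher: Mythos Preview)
Your proposal is essentially the paper's own proof: localise to an admissible affinoid $X\subset\mathscr{W}$ and a suitable $Q$, translate the hypotheses on $\phi$ into the hypotheses of Proposition~\ref{supporttheorem}, apply that proposition to place $\mathfrak{m}_M$ in the $\cH$-support of $\ker(i^\dagger)$, and then invoke Proposition~\ref{neweig} to obtain the point $\tilde\phi$ on $\mathscr{E}^D(\n\l)^{\l\mhyphen\mathrm{new}}$.

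The only place where your write-up diverges from the paper, and where you should be more careful, is the final identification of the $U_\l$-eigenvalue. You propose to show that $U_\l^2 - T_\l U_\l + \mathbf{N}\l S_\l$ annihilates the localisation of $\ker(i^\dagger)$; but this Hecke-polynomial relation is the one that holds on the \emph{old} subspace $\mathrm{im}(i)$, not on the new subspace, and even if it held it would only give $U_\l(\tilde\phi)\in\{\alpha,\mathbf{N}\l\alpha\}$ without selecting $\alpha$. The paper instead uses directly that an eigenform $f$ in $\ker(i^\dagger)$ satisfies $f|[V1U]=0$ and $f|[V\eta_\l^{-1}U]=0$; a short coset computation with these two vanishings (together with $S_\l$) yields a relation on $U_\l$ which, under $T_\l(\phi)=(\mathbf{N}\l+1)\alpha$ and $S_\l(\phi)=\alpha^2$, forces $U_\l(\tilde\phi)=\alpha$ specifically. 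Your second remark (that $\alpha^2=S_\l(\phi)$ matches the $\l$-new constraint on $U_\l$) is closer to the right idea; just make sure the calculation you actually carry out is the $\ker(i^\dagger)$ one, not the Hecke-polynomial one.
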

\begin{proof}
It follows from the construction of the eigenvariety that there exists an admissible affinoid open $X \subset \mathscr{W}$, an $\alpha$, an $r$ and a $Q$ such that the point $\phi$ corresponds to a maximal ideal $\M_\phi$ in the support of $L = \OC^{Q}$, with $U=\UU$.

Now proposition \ref{supporttheorem} applies, so $\mathfrak{M}_M$ is in the support of \[\mathrm{ker}(i^\dagger)\subset M = \OCV^{Q}.\] Therefore $\mathscr{E}^{D}(\n\l)^{\l\mhyphen\mathrm{new}}$ contains a point $\phi'$ with the same Hecke eigenvalues (away from $\l$) as $\phi$. A calculation using the fact that $\phi'$ comes from an eigenform in the kernel of $i^\dagger$ shows that the point $\phi'$ corresponds to the root $\alpha$.
\end{proof}
\begin{remark}
Note that the same conclusion as the above Theorem holds for a \emph{family} of points of $\mathscr{E}^D(\n)$ on which $T_\l^2-(\mathbf{N}\l+1)^2S_\l$ vanishes. Since this Hecke operator is non-vanishing on essentially classical points, it cuts out a codimension one subspace on each irreducible component of $\mathscr{E}^D(\n)$ where it is not invertible.
\end{remark}
\begin{remark}
Recall that the eigenvarieties $\mathscr{E}^D(\n)$ and $\mathscr{E}^D(\n\l)^{\l\mhyphen\mathrm{new}}$ are constructed using Hecke operators away from $\n$. We can replace these eigenvarieties by the `full' reduced eigenvarieties constructed using Hecke operators at all finite places (including the places $v$ dividing $\delta$ --- here the Hecke operator is given by the double coset operator $[U\varpi_v U]$, where $\varpi_v \in D_v^\times \hookrightarrow D_f^\times$ is a uniformiser of $\OO_{D_v}$), and obtain exactly the same statement as Theorem \ref{raise}.
\end{remark}
\subsection{Examples}\label{example}
\subsubsection{Examples if Leopoldt's conjecture does not hold for $(F,p)$}\label{leoeg} Suppose $[F:\Q]$ is even, and let $D/F$ be a definite quaternion algebra non-split at all infinite places and split at all finite places. 

\begin{proposition}
Suppose that Leopoldt's conjecture does not hold for $(F,p)$. Then there is a point $x_L$ of $\mathscr{E}^D(\OO_F)$, lying over the point of $\mathscr{W}$ corresponding to the character \begin{align*}\kappa:\OO_p^\times \times \OO_p^\times &\rightarrow K^\times\\(a,b) &\mapsto \prod_{i\in I} a_i^{-2}b_i\end{align*} such that $T_v(x_L) = 1+(\mathbf{N}v)^{-1}$ and $S_v(x_L) = (\mathbf{N}v)^{-2}$ for all finite places $v$ of $F$ with $v \nmid p$, and $U_{\pi}(x_L)=1$. In particular, the semisimple representation of $G_F = \Gal(\overline{F}/F)$ attached to $x_L$ is $\mathbf{1} \oplus \chi_{cyc}^{-1}$, where $\mathbf{1}$ denotes the trivial character and $\chi_{cyc}$ denotes the $p$-adic cyclotomic character.

For any prime $\l$ of $F$ which is coprime to $p$ the point of $\mathscr{E}^{D,\l-old}(\l)$ lying over $x_L$ which corresponds to the root $(\mathbf{N}\l)^{-1}$ of the Hecke polynomial $X^2-(1+(\mathbf{N}\l)^{-1})X + (\mathbf{N}\l)^{-1}$ also lies in $\mathscr{E}^{D}(\n\l)^{\l\mhyphen\mathrm{new}}$.
\end{proposition}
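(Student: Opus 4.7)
The plan is to first produce the point $x_L$ and then deduce the level-raising assertion from Theorem~\ref{raise}. The construction of $x_L$ is the substantive step, and is precisely where the failure of Leopoldt's conjecture enters.

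Parsing the weight as $\kappa = (\nn,\vv)$ with $\nn(a) = \prod_{i\in I} a_i^{-2}$ and $\vv(b) = \prod_{i\in I} b_i$, we observe first that $\nn$ has infinite order (as $I$ is nonempty), so by Definition~\ref{eismodule} any point of $\mathscr{E}^D(\OO_F)$ lying over $\kappa$ will automatically fail to be very Eisenstein. Moreover, $\kappa$ sits in the $(g+1)$-dimensional Zariski closure of the classical parallel-type weights but is itself non-classical. To realise $x_L$, I would produce a continuous Hecke character $\psi: F^\times\backslash\A_F^\times \to \OO_p^\times$, unramified outside $p$ and trivial on $F_\infty^{\times,+}$, whose restriction to $\OO_p^\times$ matches the infinitesimal data of $\kappa$; class field theory will then identify $\psi$ with $\chi_{cyc}^{-1}$ as a Galois character. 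The key point is that the relevant space of continuous characters has $\Z_p$-rank equal to $\dim_{\Z_p}(\OO_p^\times/\overline{\OO_F^{\times,+}}) = 1 + \mathfrak{d}$ (with the bar denoting $p$-adic closure). Under Leopoldt this rank is $1$ and only the cyclotomic direction itself is available, so the compatibility conditions at $p$ and at $\infty$ cannot be simultaneously satisfied; the hypothesis $\mathfrak{d}\geq 1$ supplies precisely the extra dimension required.

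With $\psi$ in hand, $x_L$ is realised via the Eisenstein-type overconvergent eigenform $f \in \mathbf{S}^D_X(U_1(\OO_F);r)$ (for $X$ a small affinoid neighbourhood of $\kappa$ in $\mathscr{W}$ and suitable $r$) defined by $f(g) = \widetilde{\psi}(Nm(g))$, where $\widetilde{\psi}$ is extended from $\psi$ so that the $p$-component acts through the weight $\kappa$. The $U_1(\OO_F)$-invariance of $f$ reduces to the built-in triviality of $\psi$ on $\OO_F^{\times,+}$, and a standard double coset calculation then yields $T_v f = (1 + (\N v)^{-1}) f$, $S_v f = (\N v)^{-2} f$ for $v \nmid p\delta$, and $U_\pi f = f$; the normalisation $\psi(\varpi_v) = \N v$ pins down the identification $\psi = \chi_{cyc}^{-1}$, and the semisimple Galois representation $\mathbf{1} \oplus \chi_{cyc}^{-1}$ then follows by Chebotarev density.

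For the final assertion I would check directly that $x_L$ meets the hypotheses of Theorem~\ref{raise}: the identity $T_\l^2(x_L) - (\N\l+1)^2 S_\l(x_L) = (1 + (\N\l)^{-1})^2 - (\N\l+1)^2 (\N\l)^{-2} = 0$ is immediate, and the $\l$-Hecke polynomial factors as $(X-1)(X - (\N\l)^{-1})$, so the stated root $(\N\l)^{-1}$ pairs with $1 = \N\l\cdot (\N\l)^{-1}$ in the form $(\alpha, \N\l\,\alpha)$ with $\alpha = (\N\l)^{-1}$. Combined with the observation (already noted) that $x_L$ is not very Eisenstein, Theorem~\ref{raise} delivers the desired $\l$-new point. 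The main obstacle throughout is the construction of $\psi$ and the verification that the corresponding $f$ defines a genuine finite-slope overconvergent eigenform on the correct $X$; this is exactly where the dimension count arising from the Leopoldt defect is indispensable.
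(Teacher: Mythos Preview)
Your verification of the hypotheses of Theorem~\ref{raise} in the final paragraph is correct, but the construction of $x_L$ has a genuine gap: the form $f(g)=\widetilde{\psi}(Nm(g))$ you propose factors through the reduced norm, and Lemma~\ref{normtrivial} of the paper shows that any nonzero element of $\mathbf{S}^D_Y(U;r)$ factoring through $Nm$ forces the character $\nn$ attached to the weight to have \emph{finite} order. Since at your weight $\kappa$ one has $\nn(a)=\prod_{i\in I}a_i^{-2}$, which is of infinite order (as you yourself observe when checking that $x_L$ is not very Eisenstein), your $f$ must vanish identically and therefore does not define a point of the eigenvariety. In other words, the very feature that makes $x_L$ avoid the ``very Eisenstein'' obstruction is what kills the norm-form construction.

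The paper's route is genuinely different and circumvents this. Rather than writing down an explicit form on the quaternionic side, it first invokes \cite[\S4.3]{Ventullo}: the failure of Leopoldt for $(F,p)$ produces the desired system of Hecke eigenvalues inside the \emph{cuspidal} ordinary $\Lambda$-adic Hecke algebra for $\GL_2/F$ (so the corresponding Hida family consists of honest cusp forms whose eigenvalues specialise to those of $x_L$). One then transfers this family to $\mathscr{E}^D(\OO_F)$ via Chenevier's $p$-adic Jacquet--Langlands correspondence \cite{ChenJL}. The resulting point on the quaternionic eigenvariety is a limit of classical cuspidal points and does \emph{not} arise from a norm form; this is why it can live at a weight with $\nn$ of infinite order. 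So the Leopoldt defect enters not by enlarging a space of Hecke characters as in your sketch, but by guaranteeing that the Eisenstein eigenvalues occur in a genuinely cuspidal Hida family on the $\GL_2$ side.
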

\begin{proof}
It is explained in \cite[\S 4.3]{Ventullo} that the desired system of Hecke eigenvalues arises from the cuspidal ordinary $\Lambda$-adic Hecke algebra for $\GL_2/F$. There are then several ways to see that this system of eigenvalues must also appear in (an ordinary component of) the eigenvariety $\mathscr{E}^D(\OO_F)$.  

For example, one can apply \cite[Th\'{e}or\`{e}me 1]{ChenJL} to systems of (in this case, finitely generated) Banach modules provided by, on the one hand, the ordinary part of the modules defined in section \ref{ocforms} (with the weights varying in the one dimensional family with $\vv$ trivial and $\nn$ `parallel') and on the other hand, the generic fibre of the ordinary part of the modules of Katz $p$-adic Hilbert modular forms.

Note that the $T_v$ and $S_v$ eigenvalues ($t_v$ and $s_v$ respectively) can be read off from the characteristic polynomial of an arithmetic Frobenius element acting on the Galois representation attached to $x_L$, which is $X^2-t_vX + (\mathbf{N}v)s_v$.

Now for the last part of the proposition we can immediately observe that the point $x_L$ appearing in the above proposition satisfies the conditions of Theorem \ref{raise} for any prime $\l$ of $F$ which is coprime to $p$. 
\end{proof}

Ribet's method can be applied to the family of Galois (pseudo-)representations of $G_F=\Gal(\overline{F}/F)$ attached to an irreducible component $\mathscr{C}$ of $\mathscr{E}^{D}(\l)^{\l\mhyphen\mathrm{new}}$ passing through the point of  $\mathscr{E}^{D}(\l)^{\l\mhyphen\mathrm{new}}$ lying over $x_L$ which is provided by Theorem \ref{raise}. We therefore obtain the following:

\begin{corollary}\label{extn}There exists a non-split extension $V(x_L)$ of $G_F$-representations over a finite extension $K/\Q_p$

\[\label{ext} 0 \rightarrow K \rightarrow V(x_L) \rightarrow K(-1) \rightarrow 0\]

where $K$ denotes the trivial $K$-representation and $K(-1)$ denotes the representation given by the inverse of the cyclotomic character.

Moreover, $V(x_L)$ is unramified outside $\l$ (in particular, $V(x_L)$ is locally a split extension at places dividing $p$). 
\end{corollary}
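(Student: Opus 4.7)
The plan is to apply Ribet's method, in the pseudo-representation formalism of Bellaïche--Chenevier, to a family of Galois representations carried by an irreducible component $\mathscr{C}$ of $\mathscr{E}^D(\l)^{\l\mhyphen\mathrm{new}}$ through the point $x_L'$ above $x_L$ provided by Theorem \ref{raise}. By Proposition \ref{neweig} and Theorem \ref{Zardens}, $\mathscr{C}$ is equidimensional of dimension $\dim(\mathscr{W})$ and contains a Zariski-dense set of essentially classical points. Each such point carries (up to a character twist) an absolutely irreducible $2$-dimensional Galois representation attached to a cuspidal Hilbert modular eigenform, and interpolating their traces yields a continuous pseudo-character $T_\mathscr{C}: G_F \to \OO(\mathscr{C})$, unramified outside $\l p$, which is absolutely irreducible at the generic point of $\mathscr{C}$.

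At the point $x_L'$, $T_\mathscr{C}$ specializes to $\mathrm{tr}(\mathbf{1}\oplus \chi_{cyc}^{-1})$, and these two characters are distinct since $\chi_{cyc}|_{G_F}$ is non-trivial. Applying the generalized matrix algebra machinery to the completed local ring of $\mathscr{C}$ at $x_L'$ then yields, possibly after enlarging $K$, a $G_F$-stable lattice in the generic representation whose reduction modulo the maximal ideal is a non-split short exact sequence
\[0 \to K \to V(x_L) \to K(-1) \to 0,\]
whose extension class $c \in H^1(G_F, K(1))$ is unramified outside the set of places of $F$ dividing $\l p$.

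For the local triviality of $c$ at each $v \mid p$ (equivalently, for the local splitness of $V(x_L)|_{G_{F_v}}$), one exploits ordinarity: since $U_\pi(x_L) = 1$ is a $p$-adic unit, $x_L'$ lies on an ordinary component of $\mathscr{C}$, and by Hida theory there is a $G_{F_v}$-stable rank-one sub of $V_\mathscr{C}|_{G_{F_v}}$. At classical arithmetic points in an ordinary neighborhood this sub is an explicit unramified twist of $\chi_{cyc}^{k_v-1}$; specializing via Zariski density to $x_L'$ and matching Hodge--Sen--Tate weights against the known semisimplification $\mathbf{1}\oplus\chi_{cyc}^{-1}$ identifies the specialization with $\chi_{cyc}^{-1}|_{G_{F_v}}$. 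Combined with the Ribet-provided sub $\mathbf{1}\hookrightarrow V(x_L)|_{G_{F_v}}$, this exhibits two distinct rank-one $G_{F_v}$-subs of $V(x_L)|_{G_{F_v}}$, forcing the restriction to split. The main obstacle is this last local analysis at $p$: establishing the ordinary filtration of the family at the non-classical weight $x_L$ and correctly identifying its specialization, which should be carried out by first constructing the filtration over an ordinary neighborhood of $x_L'$ in $\mathscr{C}$ and then using Zariski density of classical arithmetic points to pin it down.
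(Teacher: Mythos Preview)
Your proposal is correct and follows the same strategy as the paper: apply Ribet's lemma in the Bella\"{\i}che--Chenevier pseudo-representation framework to obtain the non-split extension unramified outside $p\l$, then invoke ordinarity at places above $p$ to force local splitting there. The paper's treatment of the last step is more economical: it simply cites \cite{Wiles} for the fact that the ordinary local Galois representation at each $\p\mid p$ has an unramified character as a \emph{quotient}, which must be $\mathbf{1}$ (the unique unramified piece of the semisimplification) and hence coincides with the global sub, giving the splitting immediately---whereas you take the dual route via the ordinary sub and a Hodge--Sen--Tate weight comparison to reach the same conclusion.
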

\begin{proof}The construction of $V(x_L)$ is a standard application of `Ribet's lemma' --- for example see \cite[Proposition 9.3]{BCBK} for an application in a more complicated situation. By construction we get an extension which is unramified outside $p\l$. But the extension $V(x_L)$ also splits at every place $\p|p$, since the local Galois representations at these places have an unramified character as a \emph{quotient} (\cite{Wiles}).
\end{proof}
 One can then ask whether $V(x_L)$ is actually ramified at $\l$ or not. We have constructed $V(x_L)$ using a family of Galois representations which is generically ramified at $\l$, and the monodromy filtrations on the local Weil-Deligne representations at classical points in this family are compatible with the extension structure (\ref{ext}), so this extension is, in contrast to what happens at primes above $p$, not obviously forced to split at $\l$. But in fact there are no extensions of $K(-1)$ by $K$ which are ramified at a single prime $\l$:
 \begin{lemma}
 The representation $V(x_L)$ of Corollary \ref{extn} is unramified everywhere.
 \end{lemma}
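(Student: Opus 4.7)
The plan is to convert the question into a statement about an element of $H^1(G_F, K(1))$ and apply global duality. Let $c \in H^1(G_F, K(1))$ classify the twist of the short exact sequence in Corollary~\ref{extn}. Unramifiedness of $V(x_L)$ at $\l$ is equivalent to $c_\l := c|_{G_{F_\l}} = 0$, since for $\l \nmid p$ the unramified cohomology
\[H^1_{\mathrm{ur}}(G_{F_\l}, K(1)) = K(1)^{I_\l}/(\mathrm{Frob}_\l - 1)K(1)^{I_\l} = K/(\mathbf{N}\l - 1)K\]
vanishes because $\mathbf{N}\l \ne 1$.

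First I would collect the vanishing of $c_v$ at every place $v \ne \l$. For finite $v \nmid p\l$, $V(x_L)$ is unramified at $v$ by Corollary~\ref{extn}, so $c_v$ lies in $H^1_{\mathrm{ur}}(G_{F_v}, K(1))$ which is zero by the same computation. For $v \mid p$, Corollary~\ref{extn} provides the stronger assertion that the extension is locally split there, so $c_v = 0$ directly. For the (real) archimedean places, $H^1(\Gal(\C/\R), K(1))$ vanishes because $p$ is odd and $K$ is a $\Q_p$-vector space.

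The main step is global duality. Let $c' \in H^1(G_F, K)$ be the continuous character $\log_p \circ \chi_{\mathrm{cyc}}$ obtained by composing the $p$-adic cyclotomic character with $\log_p : \Z_p^\times \to \Z_p$; this is unramified outside $p$ and infinity. Choosing a finite set $S$ of places containing $\l$, $\infty$, and all primes above $p$, Poitou--Tate reciprocity gives
\[\sum_{v \in S} \mathrm{inv}_v(c_v \cup c'_v) = 0,\]
and by the previous paragraph only the $v = \l$ summand survives, so $\mathrm{inv}_\l(c_\l \cup c'_\l) = 0$.

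To conclude, I would invoke local Tate duality at $\l$: the cup-product pairing $H^1(G_{F_\l}, K(1)) \times H^1(G_{F_\l}, K) \to K$ is perfect, and $H^1(G_{F_\l}, K)$ is the one-dimensional $K$-vector space of unramified characters, identified with $K$ via evaluation at $\mathrm{Frob}_\l$. Now $c'_\l$ is unramified at $\l$ (since $\chi_{\mathrm{cyc}}$ is unramified away from $p$), and $c'_\l(\mathrm{Frob}_\l) = \log_p(\mathbf{N}\l)$ is nonzero because any integer $\ge 2$ fails to be a root of unity in $\Z_p$. Hence $c'_\l$ generates $H^1(G_{F_\l}, K)$, and non-degeneracy of the local pairing forces $c_\l = 0$. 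The main technical point is the correct set-up of Poitou--Tate global duality with $\Q_p$-coefficients; once the auxiliary global character $\log_p \chi_{\mathrm{cyc}}$ has been chosen, everything else reduces to standard local cohomology computations.
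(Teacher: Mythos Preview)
Your proof is correct and takes a genuinely different route from the paper's argument.

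The paper argues via Kummer theory and a dimension count. It identifies the relevant Selmer groups $K^1(\l)$ (classes unramified outside $p\l$ and trivial at $p$) and $K^1$ (classes unramified outside $p$ and trivial at $p$) with kernels of maps $\OO_{F,\l}^\times\otimes K \to \bigoplus_{\p\mid p}\widehat{F_\p^\times}\otimes K$ and $\OO_{F}^\times\otimes K \to \bigoplus_{\p\mid p}\widehat{F_\p^\times}\otimes K$, and then checks that passing from $\OO_F^\times$ to $\OO_{F,\l}^\times$ increases both the source and the image by exactly one dimension (using an $\l$-unit whose norm is a nontrivial power of the rational prime under $\l$). Hence $K^1(\l)=K^1$ and the class is already unramified at $\l$.

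Your approach instead produces an explicit global annihilator: the class $c' = \log_p\circ\chi_{\mathrm{cyc}} \in H^1(G_{F,S},K)$ pairs to zero with $c$ locally at every $v\ne\l$, so Poitou--Tate reciprocity forces $\mathrm{inv}_\l(c_\l\cup c'_\l)=0$; since $c'_\l$ spans the one-dimensional space $H^1(G_{F_\l},K)$ (as $\log_p(\mathbf{N}\l)\ne 0$), local duality gives $c_\l=0$. This is more direct and avoids any dimension bookkeeping; the paper's argument, by contrast, makes the underlying arithmetic (the structure of $\l$-units) completely explicit. One small remark: your parenthetical ``$p$ is odd'' at the archimedean places is unnecessary, since $2$ is invertible in any $\Q_p$-vector space regardless of $p$; the vanishing of $H^1(\Gal(\C/\R),K(1))$ holds for all $p$.
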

\begin{proof}First note that the twist $V(x_L)(1)$ corresponds to a line in the kernel  \[K^1(\l) = \ker [H^1_{\mathrm{ur},\{p\l\}}(G_{F},K(1)) \rightarrow \bigoplus_{\p|p} H^1(G_{F_{\p}},K(1))],\] where $H^1_{\mathrm{ur},\{p\l\}}(G_{F},K(1))$ denotes the global cohomology classes which are unramified away from $p\l$. If we denote by $K^1$ the kernel of the map \[H^1_{\mathrm{ur},\{p\}}(G_{F},K(1)) \rightarrow \bigoplus_{\p|p} H^1(G_{F_{\p}},K(1))\] from the Selmer group where the cohomology classes are unramified outside $p$, then there is an inclusion $K^1\subset K^1(\l)$. It remains to show that the inclusion $K^1\subset K^1(\l)$ is an equality. 

We just need to show that $K^1$ and $K^1(\l)$ have the same dimension. Kummer theory induces isomorphisms \[K^1(\l) \cong \ker[\OO_{F,p\l}^\times\otimes_{\Z}K \rightarrow \bigoplus_{\p|p} \widehat{F_{\p}^\times}\otimes_{\Z_p}K]=\ker[\alpha:\OO_{F,\l}^\times\otimes_{\Z}K \rightarrow \bigoplus_{\p|p} \widehat{F_{\p}^\times}\otimes_{\Z_p}K]\] and \[K^1 \cong \ker[\OO_{F,p}^\times\otimes_{\Z}K \rightarrow \bigoplus_{\p|p} \widehat{F_{\p}^\times} \otimes_{\Z_p}K]=\ker[\beta:\OO_{F}^\times\otimes_{\Z}K \rightarrow \bigoplus_{\p|p} \widehat{F_{\p}^\times} \otimes_{\Z_p}K].\] Here $\OO_{F,\mathfrak{a}}$ denotes the $\mathfrak{a}$-units, for an ideal $\mathfrak{a}$ of $\OO_F$ and $\widehat{F_{\p}^\times}$ denotes the $\Z_p$ module $\varprojlim_{n}F_{\p}^\times/(F_{\p}^\times)^{p^n}$. Observe that if $l$ is the rational prime under $\l$, there is an element $x$ of $\OO_{F,\l}^\times$ with $N_{F/\Q}(x) = l^a$ for a non-zero $a$ (take a suitable power of the ideal $\l$ to obtain a principal ideal and then take $x$ to be a generator, or the square of a generator). So the dimensions of the image and the domain of $\alpha$ are one larger than the dimensions of the image and the domain of $\beta$ (one sees that the image grows by considering composition with the product of norm maps to $\widehat{\Q_{p}^\times} \otimes_{\Z_p}K$, in which $x$ maps to something non-trivial, whilst $\OO_{F}^\times\otimes_{\Z}K$ maps to zero), hence the kernels have the same dimension.
\end{proof}
Note that if we allow ramification at \emph{two} auxiliary primes then one may obtain ramified extensions. However, the fact that the extension $V(x_L)$ turns out to be unramified at $\l$ indicates that it will be difficult to ensure that any extensions constructed using Ribet's method are ramified at these auxiliary primes.

\subsubsection{An example for $F=\Q$}\label{expeg}
We sketch how to construct an example of an ordinary $p$-adic modular form such that its transfer to a $p$-adic automorphic form on a definite quaternion algebra over $\Q$ satisfies the assumptions of Theorem \ref{raise}. All computations were done in Sage \cite{Sage}.

Note that the space of cusp forms $S_2(\Gamma_0(15))$ is one-dimensional. Denote by $f \in S_2(\Gamma_0(15))$ the normalised newform in this space. If we set $p=3$, then there is a unique $3$-adic Hida family passing through $f$, so we have an isomorphism of $\Lambda$-algebras $\mathbb{T}^0(\Gamma_0(15)) \cong \Lambda = \Z_p[[T]]$, where $\mathbb{T}^0(\Gamma_0(15))$ denotes the $\Lambda$-adic ordinary cuspidal Hida Hecke algebra (generated by Hecke operators prime to $15$). The form $f$ satisfies $T_{13}^2(f)-(13+1)^2S_{13}(f)=((-2)^2-(13+1)^2)f  = 0$ mod $3$.

Let $D$ be a definite quaternion algebra over $\Q$ which is non-split at the infinite place and the prime $5$, and split at all other primes. By the main result of \cite{ChenJL}, the generic fibre of the Hida family through $f$ transfers to a one--dimension rigid subspace of $\mathscr{E}^{D}(\Z)$.
\begin{proposition}
There is a characteristic $0$ point of the Hida family through $f$ such that the associated point $x$ of $\mathscr{E}^{D}(\Z)$ satisfies the conditions of Theorem \ref{raise} for $\l = (13)$. In particular there is a point of $\mathscr{E}^{D}((13))$ with the same system of Hecke eigenvalues as $x$ (away from $13$) which is a point of intersection between two irreducible components\footnote{One lying in $\mathscr{E}^{D,(13)\mhyphen old}((13))$, the other in $\mathscr{E}^{D,(13)\mhyphen new}((13))$.}.
\end{proposition}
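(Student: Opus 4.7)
The plan is to exhibit a zero of the rigid analytic function $h := T_{13}^2 - 14^2 S_{13}$ on the transferred Hida family and then verify that the resulting point is not very Eisenstein. By \cite{ChenJL}, the generic fibre of $\Spec \mathbb{T}^0(\Gamma_0(15)) \cong \Spec\Lambda$ embeds as a one-dimensional rigid subspace $\mathscr{F}\subset\mathscr{E}^D(\Z)$, and the Hecke eigenvalue functions $T_{13}, S_{13}$ restrict to elements of $\Lambda=\Z_3[[T]]$; we view $h$ as an element of $\Lambda$. The classical point corresponding to $f$ lies at $T=0$, where $h(0) = (-2)^2 - 14^2\cdot 1 = -192 = -2^6\cdot 3$, so $v_3(h(0))=1$.

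We then invoke Weierstrass preparation in $\Lambda$: $h(T) = 3^{\mu} u(T) P(T)$ with $\mu\ge 0$, $u\in\Lambda^\times$ and $P$ a distinguished polynomial. From $v_3(h(0))=1$ we read off $\mu\in\{0,1\}$, and $h$ has a zero in the open unit disc precisely when $P$ is non-constant, i.e.\ when $\mu=0$, equivalently when the reduction $\overline{h}\in\F_3[[T]]$ is nonzero. Verifying this nonvanishing is the \emph{main obstacle}: one must compute enough of the $T$-adic expansions of $T_{13}(T)$ and $S_{13}(T)$ to exhibit a Taylor coefficient of $h$ with $3$-adic valuation zero. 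Concretely, $S_{13}(T) = (1+T)^{\log_4(13)}$ as a character of the cyclotomic variable (with generator $\gamma=1+p=4$), while $T_{13}(T) \in \Lambda$ can be pinned down by combining its value $-2$ at $T=0$ with its value at a higher weight classical specialisation of the Hida family (for instance, a weight $4$ form of level $\Gamma_0(15)$). This is a finite Sage computation; once it confirms that $\overline h\ne 0$, the distinguished factor $P$ is non-constant and its roots lie in the maximal ideal of $\overline{\Z}_3$.

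Each such root gives a characteristic $0$ point $x\in\mathscr{F}(\overline{\Q}_3)\subset \mathscr{E}^D(\Z)$ at which $h$ vanishes. It remains to ensure $x$ can be chosen to avoid the very Eisenstein locus: the big Galois representation over $\mathscr{F}$ specialises to the irreducible $\rho_f$ at $T=0$, so the reducibility locus is a proper Zariski closed subset of $\mathscr{F}$; as $h$ has only finitely many zeros on the open unit disc, we may take $x$ outside this subset, in which case $\rho_x$ is irreducible and hence Definition~\ref{eismodule} is not satisfied by the associated maximal ideal. Theorem~\ref{raise} applied to $x$ then produces a point of $\mathscr{E}^D((13))$ that lies simultaneously on $\mathscr{E}^{D,(13)\mhyphen\mathrm{old}}((13))$ (as the $\alpha$-root lift of $x$) and on $\mathscr{E}^D((13))^{(13)\mhyphen\mathrm{new}}$, giving the claimed intersection of two irreducible components.
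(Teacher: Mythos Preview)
Your approach is essentially the paper's: write $h = T_{13}^2 - 14^2 S_{13} \in \Lambda$, apply Weierstrass preparation $h = 3^\mu u(T) P(T)$, and show $P$ is non-constant. Your observation that, given $v_3(h(0))=1$, the conditions $\mu=0$ and $\deg P>0$ are equivalent is correct. The difference lies in how the non-constancy of $P$ is verified. The paper does not try to exhibit a Taylor coefficient of $h$ that is a $3$-adic unit; instead it simply notes that if $P$ were constant then $h = 3^\mu u$ would have \emph{constant} valuation on $p\Z_p$, and then computes $v_3(h)$ at weights $2$ and $4$ to find the values $1$ and $2$. This is both easier and is exactly the information you need: two evaluations of $h$ at classical weights cannot pin down an individual Taylor coefficient of $h$ (so your sketch ``combine its value at $T=0$ with its value at a higher weight'' does not literally produce a unit Taylor coefficient), but the inequality $v_3(h(\alpha_4)) \ne v_3(h(0))$ already forces $\mu = 0$ and hence $\overline{h}\ne 0$, which is what you wanted.

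There is also a small gap in your treatment of the very Eisenstein condition. Knowing that the reducibility locus $R\subset\mathscr F$ is a proper closed (hence finite) subset, and that $Z(h)$ is finite, does not let you ``take $x$ outside $R$'': both sets are finite and nothing you have said prevents $Z(h)\subset R$. The clean fix here is to observe that the residual representation $\overline{\rho}_f$ is irreducible (the isogeny class $15a$ contains no rational $3$-isogeny), so every characteristic~$0$ specialisation of the Hida family has irreducible Galois representation and $R=\varnothing$. The paper does not spell this step out either, so your instinct to address it is good; it just needs the stronger input.
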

\begin{proof}
If we consider the image of the Hecke operator $T_{13}^2-14^2S_{13}$ in $\Lambda$, we obtain an element $p^\mu P(T)U(T)$ where $\mu \in \Z_{\ge 0}$, $P(T)$ is a distinguished polynomial, and $U(T) \in \Lambda^\times$. Since $P(T)$ is distinguished, its roots lie in $p\OO_K$ for $K/\Q_p$ finite, so if the degree of $P(T)$ is non-zero then specialising our Hida family to the weights corresponding to the ideals $(T-\alpha) \subset \Lambda$ for $\alpha$ a root of $P(T)$ gives the desired $p$-adic modular form (which may be transferred to a definite quaternion algebra, non-split at $5$). If $P(T)$ has degree zero, then the $p$-adic valuation of $p^\mu U(\alpha)$ is equal to $\mu$ for all $\alpha \in p\Z_p$, so we just need to show that the valuation of the Hecke eigenvalue for $T_{13}^2-14^2S_{13}$ is not constant in our Hida family. This is easily done in this example: at weight $2$ the valuation is $1$, at weight $4$ it is $2$ (and at weight $10$ it is $3$).
\end{proof}

In fact, in the above example one can see directly that the conclusion of Theorem \ref{raise} holds, without applying the results of this paper (this was explained to the author by Frank Calegari) --- the new subspace of $S_2(\Gamma_0(13\cdot 15))$ is also one-dimensional, so we again have a Hida Hecke algebra isomorphic to $\Lambda$, and a similar argument to the above, applied to the ideal describing the intersection between the two Hida families, shows that is enough to observe that the $13$-new $\Lambda$-adic form and the $13$-old $\Lambda$-adic form become more congruent at weights other than $2$. This can be checked by doing the computation described in the above proof and applying \cite[Theorem 6.C]{Diamond}, or it can be checked directly.

\begin{remark}
By contrast, consider the (also one-dimensional) space $S_2(\Gamma_0(21))$ and its normalised newform $g$. If we set $p=3$ and $l=13$, then the valuation of the eigenvalue for $T_{13}^2-14^2S_{13}$ is $1$ for the weight $2j$ specialisations of the Hida family through $g$, with $1\le j \le 40$. So it is possible that the intersection between the two Hida families is just the ideal $(p)$ --- it would be interesting to determine whether this is the case  (for example, there may be no lift of the relevant Galois representation to a representation with coefficients in $\Z/p^2\Z$ whose local representation at $l$ has the correct form, in which case the intersection must be given by $(p)$).
\end{remark}
\begin{remark}
Finally, we note that the same arguments apply to the example discussed in \cite[Example 5.3.2]{EPW}. In this case, the two Hida families described in that example do intersect at a characteristic zero point.
\end{remark}

\section{Corrections to \cite{chicomp}}
We take this opportunity to make a couple of corrections to the paper \cite{chicomp}. 
\subsection{Proof of Lemma 7(ii)}
This proof should be modified as in the proof of Lemma \ref{normtrivial} above to take account of the change of variables $z \mapsto rz$ made when fixing an isomorphism $$\mathscr{A}_{x,r} \cong \prod_{\alpha=1}^n K'\langle T \rangle.$$ This entails computing the action of $\begin{pmatrix}
1 & -pr^{-1}\\0 & 1
\end{pmatrix}$ on $\mathscr{D}_{x,r}$. The statement of the lemma is unaffected. We reproduce the correct computation here (this should replace the displayed formulae at the bottom of \cite[pg. 346]{chicomp}):
\begin{align*}\langle e_{i,\alpha},f\cdot\begin{pmatrix}
1 & -pr^{-1}\\0 & 1
\end{pmatrix}\rangle &= \langle T^i, (\sum b_{j,\alpha} T^j)\cdot\begin{pmatrix}
1 & -pr^{-1}\\0 & 1
\end{pmatrix}\rangle &=\langle T^i\cdot\begin{pmatrix}
1 & pr^{-1}\\0 & 1\end{pmatrix}, \sum b_{j,\alpha} T^j \rangle\\
&= \langle (T+p)^i, \sum b_{j,\alpha} T^j \rangle\\ &= \langle \sum_{k=0}^i \binom{i}{k}p^{i-k}T^k,\sum b_{j,\alpha} T^j\rangle \\
&= \sum_{j=0}^i \binom{i}{j}p^{i-j}b_{j,\alpha}.\end{align*} 
We can now see that if we have $f=f\cdot \begin{pmatrix}
1 & -pr^{-1}\\0 & 1\end{pmatrix}$ we get $\sum_{j=0}^i \binom{i}{j}p^{i-j}b_{j,\alpha}=b_{i,\alpha}$ for all $i$ and $\alpha$, which implies that $f=0$. 
\subsection{Section 3}
A minor comment is that $\mathscr{E}^\mathrm{old}$ as defined in section 3.1 should be replaced by its underlying reduced space. More seriously, the statement in the proof of Theorem 15 that `$\mathfrak{M}$ is not Eisenstein since the Galois representation attached to $\phi$ is irreducible' is false. There are points on the cuspidal $\GL_2$ eigencurve with reducible attached Galois representation, and these show up on the eigencurve for the definite quaternion algebra. For example in the definite quaternion algebra case, in classical weight $2$, the space of constant functions on $D_f^\times$ always gives rise to such a point. Hence in the statement of Theorem 15 we must assume that the point $\phi$ does not give rise to a `very Eisenstein' (in the terminology of the current paper) maximal ideal in the Hecke algebra. The statement of Theorem 17 must also be modified to exclude these very Eisenstein points. We note that the question of level lowering/raising for points precisely of this kind has been studied in a recent preprint of Majumdar \cite{1307.4846}.

\section{Acknowledgments}
The author would like to thank C. Khare, who prompted the writing of this paper by asking if the results of \cite{chicomp} could be applied to the Eisenstein points on Hilbert modular eigenvarieties described in Section \ref{example}. The author is supported by Trinity College, Cambridge, and the Engineering and Physical Sciences Research Council.

\end{document}